\DeclareMathOperator{\Wr}{Wr}
\newcommand\qup[1]{\mathbb{Q}_\nearrow^{#1}}
\DeclareMathOperator{\Dom}{Dom}
\DeclareMathOperator{\Age}{Age}
\DeclareMathOperator{\Eq}{Eq}
\DeclareMathOperator{\Th}{Th}
\DeclareMathOperator{\Can}{Can}
\DeclareMathOperator{\nae}{NAE}
\newcommand{\ignore}[1]{}
\DeclareMathOperator{\pol}{Pol}
\newcommand{\bA}{\ensuremath{\mathfrak{A}}}
\newcommand{\bB}{\ensuremath{\mathfrak{B}}}
\newcommand{\bC}{\ensuremath{\mathfrak{C}}}
\newcommand{\bD}{\ensuremath{\mathfrak{D}}}
\newcommand{\bJ}{\ensuremath{\mathfrak{J}}}
\DeclareMathOperator{\Csp}{CSP}
\DeclareMathOperator{\Aut}{Aut}
\DeclareMathOperator{\End}{End}
\DeclareMathOperator{\Pol}{Pol}
\DeclareMathOperator{\Sym}{Sym}
\newcommand{\bF}{{\mathfrak F}}
\title{
Structures preserved by primitive actions of $S_\omega$ 
} 
\author{Manuel Bodirsky\lmcsorcid{0000-0001-8228-3611}}
\address{Institut f\"ur Algebra, Fakult\"at Mathematik, TU Dresden, Germany} 
\email{manuel.bodirsky@tu-dresden.de} 
\thanks{Both authors have been funded by the European Research Council (Project POCOCOP, ERC Synergy Grant 101071674). Views and opinions expressed are however those of the authors only and do not necessarily reflect those of the European Union or the European Research Council Executive Agency. Neither the European Union nor the granting authority can be held responsible for them. Manuel Bodirsky has also received funding from DFG (Project FinHom, Grant 467967530).}
\author{Bertalan Bodor\lmcsorcid{0009-0003-6679-6355}}
\address{Department of Set Theory, Logic, and Topology, HUN-REN Alfr\'{e}d R\'{e}nyi Institute of Mathematics, Budapest, Hungary}
\email{bodor@renyi.hu}
\begin{document}



\begin{abstract}
We present a dichotomy for structures $\bA$ 
that are preserved by primitive actions  of $S_{\omega} = \Sym({\mathbb N})$: 
such a structure primitively positively constructs all finite structures and the constraint satisfaction problem is NP-complete, or 
the constraint satisfaction problem 
for $\bA$ is in P. 
To prove our result, we study the first-order reducts of the Johnson graph $\bJ(k)$, for $k \geq 2$, whose automorphism group $G$ equals the action of 
$\Sym({\mathbb N})$
on the set $V$ of $k$-element subsets of $\mathbb N$. 
We use the fact that $\bJ(k)$
has a finitely bounded homogeneous Ramsey expansion and that $G$ is a maximal closed subgroup of $\Sym(V)$.
\end{abstract}

\maketitle

\medskip 

\medskip 

{\bf AMS Classification:} 03C10, 03C35, 	03C40, 03C98, 68Q17

{\bf Keywords:} first-order interpretation, primitive positive definition, primitive permutation group, primitive positive construction, first-order reduct, Ramsey expansion, finite boundedness,
constraint satisfaction problem, polynomial-time tractability, NP-hardness

\tableofcontents

\section{Introduction}
The 2017 solution of the finite-domain constraint satisfaction dichotomy conjecture of Feder and Vardi~\cite{FederVardi} has been a major success in a joint effort of several research areas: graph theory, finite model theory, and universal algebra. 
If an arbitrary structure $\bB$ can primitively positively interpret all finite structures (up to homomorphic equivalence), then the constraint satisfaction problem for $\bB$ is NP-hard. Otherwise, and if $\bB$ has a finite domain, then $\bB$ has a
\emph{cyclic polymorphism} (Barto and Kozik~\cite{Cyclic}), i.e., a homomorphism $c$ from $\bB^n$ to $\bB$, for some $n \geq 2$, such that for all $x_1,\dots,x_n \in B$ 
$$f(x_1,\dots,x_n) = f(x_2,\dots,x_n,x_1).$$
In this case, the constraint satisfaction problem for $\bB$ has been shown to be in P~\cite{Zhuk20} (the result has been announced independently by Bulatov~\cite{BulatovFVConjecture} and by Zhuk~\cite{ZhukFVConjecture}). 

Many of the tools from universal algebra that have been used to obtain this result extend to large classes of structures $\bB$ over infinite domains, provided that the automorphism group of $\bB$ is large enough. 
The corresponding class of CSPs is much larger; 
see for instance~\cite{BG-Sandwich} for computational problems in graph theory and~\cite{BKR,BodirskyKnaeuerStarke} for large classes of problems from finite model theory fall into this setting. It has been conjectured that if $\Aut(\bB)$ contains
the automorphism group of a relational structure $\bA$ which is 
\begin{itemize}
\item \emph{homogeneous} (i.e., every isomorphism between finite substructures of $\bA$ extends to an automorphism of $\bA$), and 
\item \emph{finitely bounded} (i.e., there is a finite set of finite structures $\mathcal F$ such that a finite structure $\bC$ embeds into $\bA$ if and only no $\bF \in {\mathcal F}$ embeds into $\bC$), 
\end{itemize} 
then $\Csp(\bB)$ is in P or NP-complete (and similar characterisations of the NP-hard and the polynomial cases as in the finite have been conjectured as well~\cite{BPP-projective-homomorphisms,BKOPP-equations,wonderland,Topo}). 
We refer to this conjecture as the \emph{infinite-domain CSP dichotomy conjecture}. It is wide open in general, but has been verified for numerous subclasses, usually with the help of a finite homogeneous Ramsey expansion of $\bA$ (see~\cite{MottetPinskerSmoothConf,BodirskyBodorUIPJournal} for two recent results).

We approach the conjecture by studying classes that are tame from a model theory perspective. One of the most prominent model theoretic restrictions is $\omega$-stability (see, e.g.,~\cite{CherlinHarringtonLachlan,LachlanIndiscernible,CherlinLachlan,HodgesHodkinsonLascarShelah,HrushovskiTotallyCategorical}; $\omega$-stability is a model-theoretic tameness condition on a complete first-order theory asserting that it has only countably many types over any countable parameter set, which implies strong structural control over its models). If we additionally impose the restriction that the structure $\bA$ is 
$\omega$-stable, the conjecture is still far out of reach. In particular, it still contains  
the class of all structures interpretable in $({\mathbb N};=)$ (also called structures \emph{`interpretable over equality'}).
This class is among the most basic classes in model theory -- it is the smallest class which contains infinite structures and is closed under first-order interpretations. Such structures are $\omega$-categorical\footnote{A structure is called $\omega$-categorical if all countable models of its first-order theory are isomorphic; this property is highly relevant for the study of the complexity of the CSP, because if $\bB$ is $\omega$-categorical, then the complexity of $\Csp(\bB)$ is captured by the polymorphisms of $\bB$, which is an essential ingredient to the universal-algebraic approach mentioned above.} and even reducts of finitely bounded homogeneous structures (i.e., obtained from finitely bounded homogeneous structures by dropping some relations).

The class of all structures interpretable over equality contains all structures that are $\omega$-categorical and \emph{monadically stable}, which in turn contains all finite structures. The infinite-domain CSP dichotomy conjecture has been verified for the class of all $\omega$-categorical monadically stable structures~\cite{Bodor24}. See Figure~\ref{fig:classes} for a chain of model-theoretically defined classes that are further steps towards the verification of the infinite-domain CSP dichotomy conjecture (the first inclusion from the top might be an equality~\cite{BPT-decidability-of-definability}. The third inclusion will appear in a forthcoming paper~\cite{bodirsky2025takingmodelcompletecores}. The references for the dichotomy results are~\cite{Bodor24,BodMot-Unary,Zhuk20} and the references for the equalities are~\cite{Braunfeld-Monadic-Stab,BodirskyBodor}.  

\begin{figure}
\includegraphics[scale=.5]{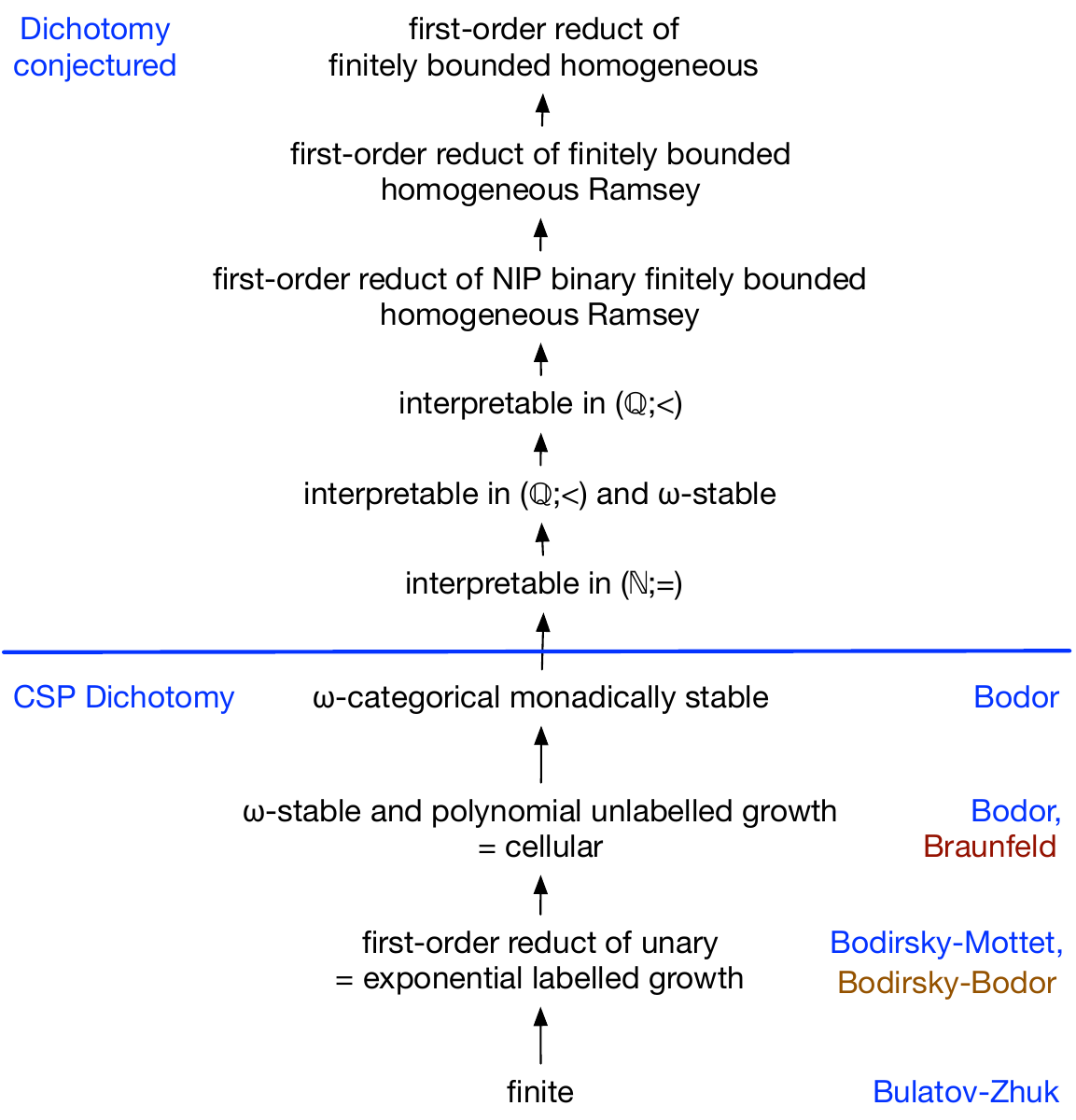}
\caption{The class of structures which are interpretable in $({\mathbb N};=)$ is one of the smallest model-theoretic classes where the infinite-domain CSP dichotomy conjecture has not yet been verified.}
\label{fig:classes} 
\end{figure}

Structures $\bB$ that are interpretable over equality can be fruitfully studied by investigating their automorphism group $\Aut(\bB)$. Typically, when studying permutation groups one first studies the primitive case, then uses the primitive case to study the transitive case, and finally uses the transitive case to describe the general case (see, e.g., Section 1.2 in~\cite{Oligo}, or Section 3 in~\cite{DixonMortimer} for more details on this approach). In this article, we focus on the primitive case, and verify the infinite-domain CSP dichotomy conjecture in the case that $\Aut(\bB)$ contains
a primitive action of $\Sym({\mathbb N})$. 
Such structures $\bB$ can be equivalently described relationally, without reference to the automorphism group; for this, we need the following definitions. 
 
A \emph{first-order reduct} of a structure $\bA$ is a structure with the same domain as $\bA$ whose relations are first-order definable in $\bA$.
If $\bA$ is $\omega$-categorical, then $\bB$ is a first-order reduct of $\bA$ if and only if $\Aut(\bA) \subseteq \Aut(\bB)$ (a well-known consequence of the theorem of Engeler, Svenonius, and Ryll-Nardzewski; see, e.g., Theorem 6.3.1 in~\cite{Hodges}).
The automorphism group of a structure $\bB$ contains a primitive action of $\Sym({\mathbb N})$ if and only if $\bB$
is (isomorphic to) a first-order reduct of a 
particular undirected graph $\bJ(k)$, for some $k \in {\mathbb N}$, called the \emph{Johnson graph} (see Section~\ref{sect:J}), which is interpretable over $({\mathbb N};=)$.
The automorphism group of $\bJ(k)$ equals the action of $\Sym({\mathbb N})$
on the set $V$ of $k$-element subsets of $\mathbb N$ (see~\cite[Example 1.2]{Thomas96}). 
Simon Thomas~\cite{Thomas96} showed that $\Sym(V)$ is the only proper closed supergroup of $\Aut(\bJ(k))$.

The automorphism group of a structure 
does not carry sufficient information to determine the computational complexity of
 the CSP of the structure. If the structure is 
 $\omega$-categorical, we only recover the structure from the automorphism group only up to \emph{first-order interdefinability}. 
However, the complexity of the CSP is preserved by 
\emph{primitive positive bi-interpretability}. 

Another important concept to study the computational complexity of CSPs is the concept of a \emph{model-complete core},
which is a structure $\bB$ such that every endomorphism of $\bB$ preserves all first-order formulas. 
Every $\omega$-categorical structure is homomorphically equivalent to a model-complete core, which is unique up to isomorphism, and again $\omega$-categorical~\cite{Cores-journal,BodHilsMartin-Journal}. 
Classifying the model-complete cores 
of all structures whose automorphism group contains a given automorphism group can be quite challenging.\footnote{A missing classification of this type is the main obstacle for classifying the complexity of all CSPs for structures with the same automorphism group as Allen's Interval Algebra, mentioned e.g.\ in~\cite{Qualitative-Survey}.} 

\subsection{Contributions} 
We show that every 
structure that contains a primitive action of $\Sym({\mathbb N})$ is (isomorphic to) a first-order reduct of $\bJ(\ell)$, for some $\ell \in {\mathbb N}$. Then we show that every
first-order reduct $\bB$ of $\bJ(\ell)$, for $\ell \geq 2$, which is a model-complete core 
primitively positively interprets all finite structures (Theorem~\ref{thm:J-stbb}). 
From this, we obtain that the mentioned infinite-domain CSP dichotomy conjecture holds for the class of structures 
that are preserved by a primitive action of $\Sym({\mathbb N})$ (Corollary~\ref{cor:class}). 

In our proof we make use of structural Ramsey theory. 
A method using Ramsey theory and canonical functions was developed and used in earlier papers; see~\cite{BP-reductsRamsey} for an introduction. However, it can be observed that this method does not scale well, in the sense that for more complicated structures, the number of canonical functions and the number of cases that needs to be considered in a combinatorial analysis grows tremendously. In our case, it is furthermore unclear how to perform such an analysis for $\bJ(k)$ for arbitrary $k$.

In this article we develop novel techniques to overcome this difficulty: instead of classifying all canonical functions, we collect enough partial information about them to conclude the proof of the statement above. On a high-level, our technique is as follows. 
First, we construct an expansion of $\bJ(k)$ by finitely many relations which is homogeneous, finitely bounded, and additionally has the \emph{Ramsey property}~\cite{Topo-Dynamics} (Section~\ref{sect:RamseyExp}). 
As in~\cite{BP-reductsRamsey}, we use canonical functions of the Ramsey expansion $\bJ^<(k)$ to classify the model-complete cores of first-order reducts of $\bJ(k)$. The approach from~\cite{BP-reductsRamsey} cannot be adapted without further refinement, because already in the case of $\bJ^<(2)$ there are too many orbits of pairs and thus far too many behaviours of canonical functions to be studied exhaustively. 
In contrast, our approach uses the concept of \emph{(canonical) range-rigid maps} from~\cite{MottetPinskerCores}.  
Our key and novel technical contribution is the analysis of canonical functions and of range-rigid canonical functions based on their action on definable equivalence relations in $\bJ^<(k)$. In Section~\ref{sect:defequiv} we develop a powerful formalism to reason about such relations and the maps that preserve them.

\section{Preliminaries}
We write $[n]$ for $\{1,\dots,n\}$ and 
${\mathbb N}$ for $\{0,1,2,\dots\}$. 
If $A$ is a set, $n \in {\mathbb N}$, and $a \in A^n$, then we also write $(a_1,\dots,a_n)$ for $a$. 
If $f \colon A \to B$ is a function,
then we may also apply $f$ \emph{setwise} and \emph{componentwise}:
\begin{itemize}
\item  for $S \subseteq A$, we write $f(S)$ for $\{f(s) \mid s \in A\}$ (set-wise), 
\item for $s \in A^n$, we write $f(s)$ for 
$(f(s_1),\dots,f(s_n))$ (componentwise). 
\end{itemize} 
We write ${A \choose n}$ for the set of all subsets of $A$ of cardinality $n$. 

If $E$ is an equivalence relation on $A$, then we write $A/E$ for the set of equivalence classes of $E$. 
If $a \in A$, then $a/E$ denotes the equivalence class of $a$ with respect to $E$.

\subsection{Concepts from group theory}
If $A$ is a set, we write $\Sym(A)$ for the set of all permutations of $A$, which forms a group with respect to composition.
If $G$ and $H$ are groups, then we write $H \leq G$ if $H$ is a subgroup of $G$. 
Subgroups of $\Sym(A)$, for some set $A$, are called \emph{permutation groups (on $A$)}. 
If $G$ is a permutation group $G$ on $B$
and $A \subseteq B$, then we write
\begin{itemize}
\item $G_{A}$ for the subgroup of $G$ which consists of all $g \in G$ such that $g(a) = a$ for all $a \in A$ (the \emph{point stabilizer}), 
\item $G_{\{A\}}$ for the subgroup of $G$ which consists of all $g \in G$ such that $g(A) = A$  (the \emph{set stabilizer}). 
\end{itemize} 
An \emph{orbit} of a permutation group $G$ on $A$ is a set of the form
$\{g(a) \mid g \in G\}$, for some $a \in A$. 


A permutation group $G$ on an at most countable set $A$ is called \emph{oligomorphic} if for every $n \in {\mathbb N}$, the permutation group on $A^n$ given by 
$\{a \mapsto g(a) \mid g \in G\}$
has finitely many orbits of $G$ (i.e., we consider the componentwise action of $G$ on $A^n$). 
Note that if $G$ is an oligomorphic permutation group on $B$, and $A \subseteq B$ is finite, then $G_{A}$ is oligomorphic as well. 


\subsection{Concepts from model theory}
All our formulas are first-order formulas without parameters (unless explicitly stated otherwise); formulas without free variables are called \emph{sentences}. 
If $\bA$ is a structure, then the \emph{theory} $\Th(\bA)$ of $\bA$ is the set of all sentences that hold in $\bA$. 
A structure $\bA$ is called \emph{$\omega$-categorical} if all countable models of $\Th(\bA)$ are isomorphic. An at most countable structure $\bA$ is $\omega$-categorical if and only if $\Aut(\bA)$,  the automorphism group of $\bA$, is oligomorphic~\cite[Theorem 6.3.1]{Hodges}. 

As we mentioned before, a relational structure $\bB$ is called \emph{homogeneous} (sometimes also \emph{ultrahomogeneous}) 
if every isomorphism between finite substructures of $\bB$ can be extended to an automorphism of $\bB$. 
It is easy to see from the above that countable homogeneous structures with a finite relational signature are $\omega$-categorical. 

An $\omega$-categorical structure $\bB$ is a \emph{model-complete core} if every endomorphism of $\bB$ preserves all first-order formulas; equivalently, every first-order formula is equivalent to an existential positive formula over $\bB$ (see, er.g.,~\cite{Book}).

If $\bA$ and $\bB$ are relational structures on the same domain $A=B$, then
$\bA$ is a \emph{reduct} of $\bB$, and $\bB$ is called an \emph{expansion} of $\bA$, if
$\bA$ can be obtained from $\bB$ by dropping relations. Clearly, we then have 
$\Aut(\bB) \subseteq \Aut(\bA)$. 
An expansion $\bB$ of $\bA$ is called a \emph{first-order expansion} if all relations of $\bB$ are first-order definable in $\bA$.
Clearly, we then have 
$\Aut(\bB) = \Aut(\bA)$. 
 A \emph{first-order reduct} of $\bB$ is a reduct of a first-order expansion of $\bB$. 

If $\bB$ is $\omega$-categorical, then
$\bA$ is a first-order reduct of $\bB$ \emph{if and only if} $\Aut(\bB) \subseteq \Aut(\bA)$~(see, e.g., \cite[Theorem 6.3.5]{Hodges}).
Two structures $\bA$ and $\bB$ are called \emph{(first-order) interdefinable} if $\bA$ is a first-order reduct of $\bB$ and vice versa.
Note that $\omega$-categorical structures are interdefinable if and only if they have the same automorphisms. 
Two structures $\bA$ and $\bB$ are called 
\emph{bi-definable} if $\bA$ is interdefinable with a structure that is isomorphic to $\bB$. 
Note that $\bA$ and $\bB$ are bi-definable if and only if their automorphism groups are isomorphic as permutation groups.  

If $\bA$ and $\bB$ are structures, $d \in {\mathbb N}$, $S \subseteq A^d$, and $I \colon S \to B$ is a surjection, 
then $I$ is called a \emph{first-order interpretation (of $\bB$ in $\bA$)} if for every atomic formula $\phi$, including the formula $x=y$ and $x=x$, the pre-image of the relation defined
by $\phi$ in $\bB$ is first-order definable in $\bA$. 
Structures with a first-order interpretation in an $\omega$-categorical structure are $\omega$-categorical as well. 
If $I$ is a first-order interpretation of $\bB$ in $\bA$, 
and $J$ is a first-order interpretation of $\bA$ in $\bB$ such that $I \circ J$ is first-order definable in $\bB$ and $J \circ I$ is first-order definable in $\bA$, then we say that $\bA$ and $\bB$ are \emph{first-order bi-interpretable}.

If $\bA$ is a relational structure and $B \subseteq A$ then we write $\bA[B]$ for the substructure 
of $\bA$ induced on $B$.

\subsection{Concepts from constraint satisfaction}
A formula is called \emph{primitive positive (pp)} if it does not contain negation, universal quantification, and disjunction. Up to logical equivalence, these are precisely the formulas of the form
$\exists x_1,\dots,x_n (\psi_1 \wedge \dots \wedge \psi_m)$ for some atomic formulas $\psi_1,\dots,\psi_m$. 
A formula is called \emph{existential positive} if it does not contain negation and universal quantification; it is easy to see that every existential formula is equivalent to a disjunction of primitive positive formulas.

An interpretation is called \emph{primitive positive} (or \emph{existential positive}) if all the defining formulas of the interpretation are primitive positive (or existential positive, respectively); similarly, we generalise bi-interpretability.
If $\bB$ is homomorphically equivalent to a structure which has a primitive positive interpretation in $\bA$, then $\bB$ is called \emph{primitively positively constructible} in $\bA$~\cite{wonderland}. 
The relevance of primitive positive constructions for constraint satisfaction comes from the following fact. 

\begin{thm}[see, e.g., Theorem 3.1.4 in~\cite{Book}]
\label{thm:ppc}
If $\bA$ and $\bB$ have finite relational signatures and $\bB$ has a primitive positive interpretation in $\bA$, then there is a polynomial-time reduction from $\Csp(\bB)$ to $\Csp(\bA)$. 
\end{thm}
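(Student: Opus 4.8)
The statement to prove is Theorem~\ref{thm:ppc}, which asserts a polynomial-time reduction from $\Csp(\bB)$ to $\Csp(\bA)$ whenever $\bB$ has a primitive positive construction in $\bA$ (with both signatures finite). The plan is to decompose the primitive positive construction into its two constituent ingredients and handle each separately: first, $\bB$ is homomorphically equivalent to some structure $\bB'$; second, $\bB'$ has a primitive positive interpretation in $\bA$. Since polynomial-time reductions compose, it suffices to show (i) $\Csp(\bB)$ and $\Csp(\bB')$ are the same problem when $\bB$ and $\bB'$ are homomorphically equivalent, and (ii) there is a polynomial-time reduction from $\Csp(\bB')$ to $\Csp(\bA)$ when $\bB'$ has a primitive positive interpretation in $\bA$.

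For (i), the key observation is that homomorphic equivalence of $\bB$ and $\bB'$ means there are homomorphisms $\bB \to \bB'$ and $\bB' \to \bB$; composing with a homomorphism from an instance shows that an instance $\bC$ (a structure in the same signature) maps homomorphically to $\bB$ if and only if it maps homomorphically to $\bB'$. Hence $\Csp(\bB)$ and $\Csp(\bB')$ have literally the same yes-instances, giving a trivial (identity) reduction. For (ii), I would take the interpretation $I\colon S \to B'$ of dimension $d$, with $S \subseteq A^d$, where the preimages under $I$ of the relations of $\bB'$ (including equality) are defined by primitive positive formulas over $\bA$. Given an instance $\bC$ of $\Csp(\bB')$ with variable set $X$, I would build an instance $\bC'$ of $\Csp(\bA)$ whose variable set is $X \times [d]$: each variable $x$ of $\bC$ is replaced by a $d$-tuple of variables, the domain-formula defining $S$ is imposed on each such tuple, each constraint $R(x^1,\dots,x^r)$ of $\bC$ is replaced by the primitive positive formula defining the $I$-preimage of $R$ applied to the corresponding tuples of new variables, and the equality-interpreting formula is used to glue tuples together consistently (introducing the conjuncts and existentially quantified auxiliary variables of these pp-formulas as fresh variables and constraints of $\bC'$). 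One then checks that $\bC \to \bB'$ if and only if $\bC' \to \bA$: a homomorphism $\bC' \to \bA$ yields, via $I$, a homomorphism $\bC \to \bB'$, and conversely a homomorphism $\bC \to \bB'$ can be lifted to $\bC' \to \bA$ by choosing $I$-preimages and witnesses for the existential quantifiers. The size of $\bC'$ is linear in the size of $\bC$ (the dimension $d$, the signature of $\bB'$, and the pp-formulas are all fixed), so the reduction runs in polynomial time.

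The main obstacle, and the part that needs the most care rather than cleverness, is the bookkeeping in step (ii): correctly translating a primitive positive interpretation into a gadget replacement, in particular handling the equality-interpreting formula (which need not be actual equality, so distinct $d$-tuples may represent the same element of $B'$) and making sure the existentially quantified variables inside the defining pp-formulas are introduced as genuinely fresh variables per constraint. One must also verify that the constructed $\bC'$ is well-defined as a finite structure in the signature of $\bA$ and that both directions of the correctness equivalence go through; this is routine but must be done carefully. For the clean write-up I would likely cite \cite{Book} for the standard fact that primitive positive interpretations yield polynomial-time reductions between CSPs, and restrict the detailed argument to the homomorphic-equivalence reduction and the observation that reductions compose.
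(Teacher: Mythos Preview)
The paper does not actually prove Theorem~\ref{thm:ppc}; it simply cites it as a known result from~\cite{Book}. Your proof sketch is correct and follows the standard argument one finds in that reference: split the primitive positive construction into homomorphic equivalence (which preserves the CSP verbatim) and a primitive positive interpretation (which yields a polynomial-time gadget reduction by replacing each variable with a $d$-tuple and each constraint by the corresponding pp-formula), then compose. The bookkeeping caveats you flag about the equality-interpreting formula and fresh existential variables are exactly the points that need attention in a full write-up, but nothing is missing at the level of ideas.
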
 

Note that Theorem~\ref{thm:ppc} is true for primitive positive constructions instead of primitive positive interpretations, since homomorphic equivalence preserves the CSP.
The structure $(\{0,1\};\{(0,0,1),(0,1,0),(1,0,0)\})$ has an NP-complete CSP; 
hence, Theorem~\ref{thm:ppc} implies that
if this structure (equivalently, all finite structures; see~\cite{Book}) has a primitive positive construction in $\bB$, then $\Csp(\bB)$ is NP-hard.
The \emph{infinite-domain tractability conjecture} states that if $\bB$ is a reduct of a finitely bounded homogeneous structure, 
and this structure does not have a primitive positive construction in $\bB$, then $\Csp(\bB)$ is in P.  

A formula is called \emph{existential positive} if it does not contain negation and universal quantification, but in contrast to primitive positive formulas disjunction is allowed. 
Every existential positive formula is logically equivalent to a disjunction of primitive positive formulas. 

We now recall some universal-algebraic tools to study primitive positive definability
in $\omega$-categorical structures. 
If $\bA$ is a structure, then $\End(\bA)$ denotes the set of all \emph{endomorphisms} of $\bA$, i.e., homomorphisms from $\bA$ to $\bA$. 
Clearly, $\End(\bA)$ forms a monoid with respect to composition. 
Endomorphisms of $\bA$ preserve all relations with an existential positive definition in $\bA$. 
An injective endomorphism that also preserves the complements of all relations of $\bA$ is called an \emph{embedding}.

We consider the set $B^B$ of all functions from $B$ to $B$ equipped with the topology of pointwise convergence: i.e., we equip $B$ with the discrete topology and $B^B$ with the product topology. 
It is well known that a subset $S \subseteq B^B$ is closed with respect to  this topology if and only if 
 $S = \End(\bB)$ for some relational (and homogeneous) structure $\bB$. 
 If $S \subseteq B^B$, we write $\overline S$ for the topological closure of $S$. 
 In contrast, the smallest submonoid of $B^B = \End(B;=)$ that contains $S$ will be denoted by
 $\langle S \rangle$.

\emph{Polymorphisms} will only be needed in some of the formulations of the main results in Section~\ref{sect:compl}, and do not play a role in the proofs of the present article. 
A \emph{polymorphism} of a structure $\bA$  is a homomorphism from $\bA^k$ to $\bA$, for some $k \in {\mathbb N}$; 
it is well-known and easy to check that polymorphisms preserve all primitive positive definable relations of $\bA$. If $\bA$ is $\omega$-categorical, the converse holds as well~\cite{BodirskyNesetrilJLC}. Note that $\pol(\bA)$ always form a \emph{clone}, i.e., it contains all projection maps $\pi_i^k\colon (x_1,\dots,x_k)\mapsto x_i$, and it is closed under compositions. The topology of pointwise convergence on clones can be defined similarly as for unary operations. 
It is well-known that a clone is closed with respect to this topology if and only if it is a polymorphism clone of some structure (see, e.g.,~\cite{Book}).
  
Let $\bA$ be an $\omega$-categorical structure. 
A function $f \colon A \to A$ is \emph{canonical with respect to $\bA$} if for every $k \in {\mathbb N}$ and every $a \in A^k$, the orbit of $f(a)$ only depends on the orbit of $a$ 
with respect to the componentwise action  
of $\Aut(\bA)$ on $A^k$. 
We write $\Can(\bA)$
for the set of all operations on $A$ which are canonical with respect to $\bA$; see~\cite{BP-canonical} for an introduction to the applications of canonical functions.
Lemma~\ref{lem:canon} below can be used to find canonical functions in \emph{Ramsey structures}. For this article, the precise definition of the Ramsey property will be irrelevant; all that matters are the facts that 
\begin{itemize}
\item $({\mathbb Q};<)$ has the Ramsey property (which is essentially Ramsey's theorem~\cite{Ram:On-a-problem}), and
\item whether a homogeneous structure has the Ramsey property only depends on its automorphism group, viewed as a topological group~\cite{Topo-Dynamics}.
\end{itemize} 
We use the Ramsey property exclusively via the following lemma. 
\begin{lemC}[Canonisation lemma~\cite{BPT-decidability-of-definability}, also see Theorem 5 in~\cite{BP-canonical}]
\label{lem:canon}
Let $\bA$ be countable $\omega$-categorical and Ramsey.
Then for every $f \colon A \to A$ there exists a function 
$$g \in \overline{ \{ \beta \circ f \circ \alpha \mid \alpha,\beta \in \Aut(\bA)
\} } $$
which is canonical with respect to $\bA$. 
\end{lemC}

\section{Primitive Actions of $\Sym({\mathbb N})$} 
A permutation group $G$ on a set $B$ is called \emph{transitive} if for all $x,y \in B$ there exists $\alpha \in G$ such that $\alpha(x) = y$; it is called \emph{primitive} if it is transitive and
$\Delta_B  := \{(x,x) \mid x \in B\}$ and $B^2$ are the only 
\emph{congruences} of $G$, i.e., the only 
equivalence relations on $B$ that are preserved by $G$. 
It is well known that if $B$ has least two elements, then $G$ is primitive if and only if each point stabiliser $G_a := G_{\{a\}} = G_{\{\{a\}\}}$, for $a \in B$, is a maximal subgroup of $G$. 

\subsection{The Strong Small Index Property} 
A permutation group $G$ on $X$ has the \emph{strong small index property (SSIP)} if 
if for every subgroup $H$ of $G$ of countable index there exists a finite $A \subseteq X$ such that $G_A \leq H \leq G_{\{A\}}$ (pointwise and setwise stabilisers of $A$ in $G$, respectively). 
The strong small index property is useful, because it allows one to recover information about a structure directly from its automorphism group, making it a key tool in reconstruction and automorphism-group classification results in model theory and infinite permutation groups. 

\begin{thmC}[Dixon, Neumann, and Thomas~\cite{DixonNeumannThomas}]
\label{thm:SSIP} 
$\Sym({\mathbb N})$ has the SSIP. 
\end{thmC} 

One can use this theorem to prove the following.

\begin{prop}\label{prop:prim-sym}
Let $\xi \colon \Sym({\mathbb N}) \to \Sym({\mathbb N})$ be a homomorphism such that $\xi(\Sym({\mathbb N}))$ is a primitive
permutation group $G$. Then there exists 
a $k \in {\mathbb N}$ 
such that $G$ is isomorphic (as a permutation group) 
to the setwise action of 
$\Sym({\mathbb N})$ on ${{\mathbb N} \choose k}$. 
\end{prop}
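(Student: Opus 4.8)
The plan is to analyse the kernel $N$ of $\xi$ and exploit the well-known normal subgroup structure of $\Sym(\mathbb N)$. First I would recall that the only normal subgroups of $\Sym(\mathbb N)$ are $1$, the group $\Alt_{\fin}$ of even finitely supported permutations, the group $\Sym_{\fin}$ of all finitely supported permutations, and $\Sym(\mathbb N)$ itself (a classical theorem of Schreier--Ulam--Baer). Since $G = \xi(\Sym(\mathbb N))$ is primitive on a set $B$ with at least two elements, $G$ is in particular nontrivial, so $N \neq \Sym(\mathbb N)$. I claim $N$ must be trivial: if $N \supseteq \Alt_{\fin}$, then $G \cong \Sym(\mathbb N)/N$ is a quotient of $\Sym(\mathbb N)/\Alt_{\fin}$, which has order at most $2$, contradicting primitivity on a set with $\geq 2$ points (a group of order $\leq 2$ has no faithful primitive action on more than $2$ points, and on $2$ points it is not oligomorphic — but more simply, we want an infinite $B$; I would phrase this so as to rule out the finite quotient). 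Hence $\xi$ is injective and $G$ is (isomorphic as an abstract group, indeed as a permutation group via $\xi$) a copy of $\Sym(\mathbb N)$ acting primitively and faithfully on $B$.

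Next I would use the strong small index property. Fix $a \in B$; by primitivity the point stabiliser $G_a$ is a maximal subgroup of $G$, and it has countable index since $B$ is at most countable (here I should first argue $B$ is countable: $G \cong \Sym(\mathbb N)$ has only countably many open subgroups, or rather we observe $G$ acts faithfully so $|B| \leq 2^{\aleph_0}$, but to get countability I invoke that a faithful primitive action of $\Sym(\mathbb N)$ must be on an invariant countable set — alternatively restrict attention from the outset, as in the intended application, to the countable case, or note that the orbit structure forces it). Transporting along $\xi^{-1}$, the preimage $H := \xi^{-1}(G_a)$ is a subgroup of $\Sym(\mathbb N)$ of countable index. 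By Theorem~\ref{thm:SSIP}, there is a finite $A \subseteq \mathbb N$ with $\Sym(\mathbb N)_A \leq H \leq \Sym(\mathbb N)_{\{A\}}$.

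Now the maximality of $G_a$, equivalently of $H$ in $\Sym(\mathbb N)$, pins down $A$. We have $\Sym(\mathbb N)_A \leq H \leq \Sym(\mathbb N)_{\{A\}}$, and $\Sym(\mathbb N)_{\{A\}} \cong \Sym(A) \times \Sym(\mathbb N \setminus A)$ with $\Sym(\mathbb N)_A$ the factor $\Sym(\mathbb N \setminus A)$. The intermediate subgroups $H$ correspond to subgroups of $\Sym(A)$, a finite group, so $H = \Sym(K) \times \Sym(\mathbb N \setminus A)$ is impossible unless... more precisely $H/\Sym(\mathbb N)_A$ is a subgroup of $\Sym(A)$, and maximality of $H$ forces $H/\Sym(\mathbb N)_A$ to be a maximal subgroup of $\Sym(A)$ that is still "large enough"; but in fact I would argue directly that maximality of $H$ together with $H \neq \Sym(\mathbb N)$ forces $H = \Sym(\mathbb N)_{\{A\}}$ with $A$ nonempty, and that $\Sym(\mathbb N)_{\{A\}}$ is maximal in $\Sym(\mathbb N)$ exactly when $|A| = k$ for some $k$ — which is automatic, $k := |A|$ — and moreover the action of $\Sym(\mathbb N)$ on the coset space $\Sym(\mathbb N)/\Sym(\mathbb N)_{\{A\}}$ is exactly the action on ${\mathbb N \choose k}$, since $\Sym(\mathbb N)_{\{A\}}$ is the stabiliser of the $k$-element set $A$. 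Transporting back through $\xi$, the action of $G$ on $B \cong G/G_a$ is isomorphic as a permutation group to the action of $\Sym(\mathbb N)$ on ${\mathbb N \choose k}$, which is the desired conclusion; note $k \geq 1$, and $k$ could also be such that the action is the natural one ($k=1$).

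The main obstacle I anticipate is the final bookkeeping step: showing that the only possibilities for $H$ between $\Sym(\mathbb N)_A$ and $\Sym(\mathbb N)_{\{A\}}$ that are maximal in $\Sym(\mathbb N)$ are the full set stabilisers $\Sym(\mathbb N)_{\{A\}}$ (ruling out the "diagonal-type" intermediate subgroups coming from proper subgroups of the finite group $\Sym(A)$, which would fail to be maximal because they sit properly inside $\Sym(\mathbb N)_{\{A\}}$), and identifying the resulting coset action with the Johnson-type action on ${\mathbb N \choose k}$. A secondary subtlety is justifying that $B$ is countable so that "countable index" applies; I would handle this either by restricting to the countable setting relevant to the paper or by observing that $\Sym(\mathbb N)$ has no faithful primitive action on a set of size between $\aleph_0$ and $2^{\aleph_0}$ other than those already on a countable set — but the cleanest route is to note that we only need the statement for $B$ at most countable, which is the case of interest.
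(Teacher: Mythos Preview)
Your core argument---pull back the point stabiliser $G_a$ along $\xi$, apply the strong small index property to trap $H := \xi^{-1}(G_a)$ below a setwise stabiliser, use maximality to force $H = \Sym(\mathbb N)_{\{A\}}$, and identify the coset action with the action on ${\mathbb N \choose k}$---is exactly the paper's proof. But you surround it with two detours, one of which is wrong.

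The kernel analysis is incorrect as written: $\Sym(\mathbb N)/\Alt_{\fin}$ is not of order at most $2$; that is $\Sym_{\fin}/\Alt_{\fin}$. The quotient $\Sym(\mathbb N)/\Alt_{\fin}$ has cardinality $2^{\aleph_0}$, with the large simple group $\Sym(\mathbb N)/\Sym_{\fin}$ as a further quotient, so ruling out $N \supseteq \Alt_{\fin}$ this way fails. Fortunately the whole step is dispensable: maximality of $G_a$ in $G$ pulls back to maximality of $H$ in $\Sym(\mathbb N)$ by the correspondence theorem for the \emph{surjection} $\xi \colon \Sym(\mathbb N) \twoheadrightarrow G$, with no need to know anything about $\ker\xi$. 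The paper never mentions the kernel.

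Your countability worry is also moot: by hypothesis $\xi$ lands in $\Sym(\mathbb N)$, so $G$ acts primitively (hence transitively) on $\mathbb N$ itself, and $[\Sym(\mathbb N):H] = [G:G_a] = |\mathbb N| = \aleph_0$ is immediate. Likewise, the ``main obstacle'' you flag about intermediate subgroups between $\Sym(\mathbb N)_A$ and $\Sym(\mathbb N)_{\{A\}}$ dissolves: from SSIP only the upper bound $H \leq \Sym(\mathbb N)_{\{A\}}$ is needed, and since $H$ is maximal and $\Sym(\mathbb N)_{\{A\}}$ is proper (as $A \neq \emptyset$, else $H = \Sym(\mathbb N)$), equality follows in one line. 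The paper then writes down the bijection $\alpha(F) \mapsto \xi(\alpha)(a)$ explicitly and checks it intertwines the two actions, which is the concrete version of your coset-space identification.
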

\begin{proof}
As mentioned above, the primitivity of $G$ implies that for any $a \in {\mathbb N}$ the point stabiliser $G_a$ is a maximal subgroup of $G$, and hence $H := \xi^{-1}(G_a)$ is a maximal subgroup of $\Sym({\mathbb N})$. 
The strong small index property of
$\Sym({\mathbb N})$ (Theorem~\ref{thm:SSIP}) implies that $H$
is contained in the set-wise stabiliser $\Sym({\mathbb N})_{\{F\}}$ for some finite $F \subseteq {\mathbb N}$. By the maximality of
$H$, this means that $H$ equals $\Sym({\mathbb N})_{\{F\}}$. Let $k := |F|$. 
Let $i$ be the map from ${{\mathbb N} \choose k} \to {\mathbb N}$ that maps for each $\alpha \in \Sym({\mathbb N})$ the set 
 $\alpha(F) \in {{\mathbb N} \choose k}$ to 
$\xi(\alpha)(a) \in {\mathbb N}$. Note that $i$ is well-defined
because if $\alpha, \beta \in \Sym({\mathbb N})$ are such that $\alpha(F) = \beta(F)$, then
$\alpha^{-1} \beta \in \Sym({\mathbb N})_{\{F\}} = H$, and hence $\xi(\alpha^{-1} \beta) \in G_a$.
Thus, $\alpha^{-1} \beta(a) = a$ and $\alpha(a) = \beta(a)$. Moreover, 
$i$ is an isomorphism between 
the image of the setwise action of $\Sym({\mathbb N})$ on ${{\mathbb N} \choose k}$ and $G$: 
for $\alpha \in \Sym({\mathbb N})$ and $S \in {{\mathbb N} \choose k}$, let $\beta \in \Sym({\mathbb N})$ be such that $\beta(F)=S$.
Then we have 
\begin{align*}
i(\alpha(S)) & = i(\alpha \beta(F)) = \xi(\alpha \beta) (a) = \xi(\alpha) \xi(\beta) (a) = \xi(\alpha) i(\beta(F)) = \xi(\alpha) i(S). \qedhere
\end{align*}
\end{proof}

\subsection{The Johnson Graph}
\label{sect:J}
Fix $k \in {\mathbb N}$. 
The Johnson graph $\bJ(k)$ is the graph 
with vertex set ${{\mathbb N} \choose k}$ and edge set\footnote{We view graphs here as a symmetric digraphs.}
$$E := \big \{ (S,T) \mid S,T \in {{\mathbb N} \choose k} \text{ and }  |S \cap T| = k-1 \big \}.$$ 
For $k=0$, we obtain a graph with a single vertex $\emptyset$ and no edges. 
For $k=1$, we obtain a countably infinite clique, and for $k=2$ the so-called \emph{line graph} of the countably infinite clique. 
For every $k \geq 0$, the graph $\bJ(k)$ is first-order interpretable in the $\omega$-categorical structure $({\mathbb N};=)$ and hence 
$\omega$-categorical. 
We will later need the following.

\begin{thmC}[Simon Thomas~\cite{Thomas96}, Example 1.2]\label{thm:fo-reducts}
Let $H$ be the set-wise action of $\Sym({\mathbb N})$ on ${{\mathbb N} \choose k}$ and 
let $G$ be a closed permutation group that strictly contains $H$. Then $G = \Sym({{\mathbb N} \choose k})$. 
\end{thmC}


Note that Theorem~\ref{thm:fo-reducts} has the following immediate consequences. 

\begin{cor}\label{cor:interdef} 
Any two structures with domain ${\mathbb N} \choose k$ that are preserved by the set-wise action of $\Sym({\mathbb N})$, but that are not preserved by $\Sym({{\mathbb N} \choose k})$, are first-order interdefinable.
\end{cor}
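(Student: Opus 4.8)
The plan is to derive Corollary~\ref{cor:interdef} directly from Theorem~\ref{thm:fo-reducts} together with the Galois connection between $\omega$-categorical structures and their automorphism groups. Let $H$ denote the set-wise action of $\Sym({\mathbb N})$ on ${{\mathbb N}\choose k}$, which is precisely $\Aut(\bJ(k))$, and let $\bA$, $\bB$ be two structures on the domain ${{\mathbb N}\choose k}$ preserved by $H$ but not by $\Sym({{\mathbb N}\choose k})$. First I would observe that ``preserved by $H$'' means $H\subseteq\Aut(\bA)$ and $H\subseteq\Aut(\bB)$, and ``not preserved by $\Sym({{\mathbb N}\choose k})$'' means $\Aut(\bA)\neq\Sym({{\mathbb N}\choose k})$ and likewise for $\bB$. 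Since automorphism groups are closed subgroups of the full symmetric group on the domain, $\Aut(\bA)$ is a closed permutation group containing $H$, and by Theorem~\ref{thm:fo-reducts} any closed group strictly containing $H$ equals $\Sym({{\mathbb N}\choose k})$; as this is excluded, we must have $\Aut(\bA)=H$, and by the same reasoning $\Aut(\bB)=H$. Hence $\Aut(\bA)=\Aut(\bB)$.

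The second step is to invoke $\omega$-categoricity: $\bJ(k)$ is first-order interpretable in $({\mathbb N};=)$, hence $\omega$-categorical, and since $\bA$ and $\bB$ have the same automorphism group as $\bJ(k)$ (namely $H$, which is oligomorphic), they are $\omega$-categorical as well. For $\omega$-categorical structures, the excerpt records the fact that $\bA$ is a first-order reduct of $\bB$ if and only if $\Aut(\bB)\subseteq\Aut(\bA)$. From $\Aut(\bA)=\Aut(\bB)$ we get both inclusions, so $\bA$ is a first-order reduct of $\bB$ and $\bB$ is a first-order reduct of $\bA$; by definition this is exactly first-order interdefinability, which is what the corollary asserts.

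I do not expect a genuine obstacle here — the corollary is essentially a bookkeeping exercise unpacking the definitions of ``preserved by'', ``closed permutation group'', and the automorphism-group characterisation of first-order interdefinability for $\omega$-categorical structures. The only point that warrants a sentence of care is why $\Aut(\bA)$ is a \emph{closed} subgroup of $\Sym({{\mathbb N}\choose k})$ rather than merely a subgroup, so that Theorem~\ref{thm:fo-reducts} applies; this is the standard fact that automorphism groups of relational structures are closed in the topology of pointwise convergence, already noted in the preliminaries. Everything else is a direct chain of implications.
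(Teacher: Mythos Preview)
Your proposal is correct and matches the paper's approach: the corollary is stated there as an immediate consequence of Theorem~\ref{thm:fo-reducts} with no proof given, and the argument you spell out---automorphism groups are closed, so Theorem~\ref{thm:fo-reducts} forces $\Aut(\bA)=\Aut(\bB)=H$, and then $\omega$-categoricity turns equal automorphism groups into first-order interdefinability---is exactly the intended unpacking.
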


\begin{cor}\label{cor:interdef2}
	Every automorphism of the Johnson graph $\bJ(k)$ is induced by the set-wise action of a permutation in $\Sym({\mathbb N})$ on ${\mathbb N} \choose k$. 
\end{cor}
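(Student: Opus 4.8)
The plan is to derive the statement directly from Theorem~\ref{thm:fo-reducts}. First I would record the elementary observation that $H$, the set-wise action of $\Sym({\mathbb N})$ on ${{\mathbb N}\choose k}$, is contained in $\Aut(\bJ(k))$: for $\alpha \in \Sym({\mathbb N})$ and $S,T \in {{\mathbb N}\choose k}$ we have $|\alpha(S)\cap \alpha(T)| = |S\cap T|$, so the induced permutation of ${{\mathbb N}\choose k}$ preserves the edge relation $E$. Since $\Aut(\bJ(k))$ is the automorphism group of a relational structure, it is a closed permutation group on ${{\mathbb N}\choose k}$, and it contains $H$. Hence Theorem~\ref{thm:fo-reducts} leaves exactly two possibilities: either $\Aut(\bJ(k)) = H$, which is precisely the claim, or $\Aut(\bJ(k)) = \Sym\big({{\mathbb N}\choose k}\big)$.

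To rule out the second possibility it suffices to show that $E$ is a proper, nonempty, irreflexive relation on ${{\mathbb N}\choose k}$, since then some permutation of ${{\mathbb N}\choose k}$ fails to preserve $E$ and therefore lies outside $\Aut(\bJ(k))$. For $k \geq 2$ this is immediate: picking $S,T$ with $|S\cap T| = k-1$ yields an edge, while picking disjoint $S,T$ (possible because ${\mathbb N}$ is infinite) yields a non-edge, and $E$ is irreflexive because $|S \cap S| = k \neq k-1$. Thus $\Aut(\bJ(k)) \neq \Sym\big({{\mathbb N}\choose k}\big)$, and we conclude $\Aut(\bJ(k)) = H$, i.e., every automorphism of $\bJ(k)$ is induced by the set-wise action of a permutation in $\Sym({\mathbb N})$. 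The degenerate cases are handled separately and trivially: for $k=1$ any two distinct vertices are adjacent, so $H = \Sym({\mathbb N}) = \Aut(\bJ(1))$, and for $k=0$ both groups are trivial.

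I do not expect any real obstacle here; the statement is an ``immediate consequence'' of Theorem~\ref{thm:fo-reducts}. The only points deserving an explicit line are that $\Aut(\bJ(k))$ is closed (so that the theorem applies) and that the inclusion $H \subseteq \Aut(\bJ(k))$ is a genuine containment of permutation groups rather than merely the existence of an abstract homomorphism — but this holds by construction, since $H$ is defined as the literal set-wise action on ${{\mathbb N}\choose k}$.
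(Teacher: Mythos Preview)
Your proposal is correct and matches the paper's approach: the paper gives no explicit proof, stating only that the corollary is an immediate consequence of Theorem~\ref{thm:fo-reducts}, and what you have written is precisely the natural unpacking of that implication. The only additions you make beyond what the paper leaves implicit are the verification that $H \subseteq \Aut(\bJ(k))$, the observation that $\Aut(\bJ(k))$ is closed, and the exclusion of the case $\Aut(\bJ(k)) = \Sym\big({{\mathbb N}\choose k}\big)$ together with the degenerate cases $k \in \{0,1\}$, all of which are appropriate details to spell out.
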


Combined with Proposition~\ref{prop:prim-sym}, we obtain the following.

\begin{cor}\label{cor:Jk}
Let $\bB$ be a structure that is preserved by a primitive action of $\Sym({\mathbb N})$. Then $\bB$ is a first-order reduct of $\bJ(k)$, for some $k \in {\mathbb N}$. 
\end{cor}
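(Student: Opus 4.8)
The goal is Corollary~\ref{cor:Jk}: if $\bB$ is preserved by a primitive action of $\Sym({\mathbb N})$, then $\bB$ is a first-order reduct of $\bJ(k)$ for some $k \in {\mathbb N}$. The plan is to combine Proposition~\ref{prop:prim-sym} with Corollary~\ref{cor:interdef}, so the proof is essentially a bookkeeping argument that assembles the pieces already established. First I would unpack the hypothesis: ``$\bB$ is preserved by a primitive action of $\Sym({\mathbb N})$'' means there is a homomorphism $\xi \colon \Sym({\mathbb N}) \to \Aut(\bB)$ whose image $G := \xi(\Sym({\mathbb N}))$ is a primitive permutation group on the domain $B$ of $\bB$, and $G \subseteq \Aut(\bB)$.

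Next I would apply Proposition~\ref{prop:prim-sym} to $\xi$ (after noting that primitivity forces $|B| \geq 2$, the degenerate one-point case being handled trivially since a one-element structure is a reduct of $\bJ(0)$): this yields a $k \in {\mathbb N}$ and an isomorphism of permutation groups $i$ between $G$ and the setwise action $H$ of $\Sym({\mathbb N})$ on ${{\mathbb N} \choose k}$. In particular $i$ is a bijection $B \to {{\mathbb N} \choose k}$ conjugating $G$ onto $H$. Transporting the relations of $\bB$ across $i$, we may assume without loss of generality that $B = {{\mathbb N} \choose k}$ and that $H = G \subseteq \Aut(\bB)$; equivalently, $\bB$ is preserved by the setwise action of $\Sym({\mathbb N})$ on ${{\mathbb N} \choose k}$.

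Now I would split into two cases according to whether $\bB$ is preserved by the full symmetric group $\Sym({{\mathbb N} \choose k})$. If it is, then every relation of $\bB$ is preserved by all permutations of the domain, hence is a Boolean combination of equalities, so $\bB$ is first-order interdefinable with the pure equality structure $({{\mathbb N} \choose k}; =)$, which is in turn a first-order reduct of $\bJ(k)$ (indeed of $\bJ(0)$ up to bi-definability, but $\bJ(k)$ suffices). If $\bB$ is \emph{not} preserved by $\Sym({{\mathbb N} \choose k})$, then by Corollary~\ref{cor:interdef} the structures $\bB$ and $\bJ(k)$ — both having domain ${{\mathbb N} \choose k}$, both preserved by $H$, and both not preserved by $\Sym({{\mathbb N} \choose k})$ (for $k \geq 2$ this holds for $\bJ(k)$ by Theorem~\ref{thm:fo-reducts}; for $k \in \{0,1\}$ the edge relation is trivial and we are in the previous case) — are first-order interdefinable, so in particular $\bB$ is a first-order reduct of $\bJ(k)$.

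The only genuinely delicate point, and hence the main thing to be careful about, is the reduction ``without loss of generality $B = {{\mathbb N} \choose k}$'': one must check that being a first-order reduct of $\bJ(k)$ is invariant under the isomorphism $i$ of permutation groups supplied by Proposition~\ref{prop:prim-sym}, i.e., that if $\bB$ and $\bB'$ are bi-definable (have isomorphic automorphism groups as permutation groups) and $\bB'$ is a first-order reduct of $\bJ(k)$, then so is $\bB$. This follows from the Galois connection between closed oligomorphic permutation groups and first-order definability for $\omega$-categorical structures recalled in the preliminaries: $i$ conjugates $\Aut(\bB) \supseteq G$ onto a group containing $H = \Aut(\bJ(k)) \cap \{\text{setwise actions}\}$ — more precisely onto a closed oligomorphic group containing $\Aut(\bJ(k))$ after transport — and $\bA$ is a first-order reduct of $\bJ(k)$ iff $\Aut(\bJ(k)) \subseteq \Aut(\bA)$. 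Everything else is routine case analysis once this transport principle is in place; edge cases $k = 0, 1$ should be dispatched explicitly at the start to avoid worrying about whether $\bJ(k)$ itself is rigid enough.
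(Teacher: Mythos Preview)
Your proof is correct and rests on the same main ingredient as the paper's, namely Proposition~\ref{prop:prim-sym}; the packaging of the last step, however, differs. After transporting $\bB$ to the domain ${\mathbb N \choose k}$ so that the setwise action $H$ of $\Sym({\mathbb N})$ sits inside $\Aut(\bB)$, the paper simply invokes Corollary~\ref{cor:interdef2}, which says that $\Aut(\bJ(k))$ equals $H$; hence $\Aut(\bJ(k)) \subseteq \Aut(\bB)$, and $\bB$ is a first-order reduct of $\bJ(k)$ immediately, with no case split needed. You instead reach for Corollary~\ref{cor:interdef}, whose hypothesis ``not preserved by $\Sym({\mathbb N \choose k})$'' forces you to treat the fully-symmetric case separately. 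That detour is harmless but avoidable: once $H \subseteq \Aut(\bB)$, the identity $H = \Aut(\bJ(k))$ from Corollary~\ref{cor:interdef2} finishes the argument in one line, regardless of whether $\Aut(\bB)$ is all of $\Sym({\mathbb N \choose k})$. On the other hand, your explicit discussion of the transport step (bi-definability via the permutation-group isomorphism $i$) is a point the paper's two-line proof glosses over, so your version is more self-contained in that respect.
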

\begin{proof}
By Proposition~\ref{prop:prim-sym},
there exists $k \in {\mathbb N}$ such that  
$\Aut(\bB)$ is preserved by the setwise action of $\Sym({\mathbb N})$. 
By Corollary~\ref{cor:interdef2}, 
$\Aut(\bJ(k)) \subseteq \Aut(\bB)$, and hence
$\bB$ is a first-order reduct of $\bJ(k)$. 
\end{proof}


\section{Primitive Positive Definitions over the Johnson Graph} 
This section contains a number of elementary but highly useful primitive positive definitions of some relations over the Johnson graph in terms of others. These results will then be used throughout the text. They culminate in a proof that every model-complete core structure which is first-order interdefinable with $\bJ(k)$, for some $k \geq 2$, admits primitive positive interpretations of all finite structures (Theorem~\ref{thm:J-stbb}).
Fix $k \geq 1$. For $i \in \{0,1,\dots,k\}$, let 
\begin{align*}
S_{ \leq i} & := \big \{ (u_1,u_2)
\mid u_1,u_2 \in {{\mathbb N} \choose k} \text{ and } |u_1 \cap u_2| \leq i \big \}, \\
S_{ \geq i} & := \big \{ (u_1,u_2)  \mid u_1,u_2 \in {{\mathbb N} \choose k} \text{ and } |u_1 \cap u_2| \geq i \big \}, \\
\text{ and } \quad S_i & := S_{\leq i} \cap S_{\geq i} . 
\end{align*}
Note that $S_k$ is the equality relation on ${\mathbb N} \choose k$ and 
that $S_{k-1}$ equals $E$.


\begin{lem}\label{lem:down}
For every $i,j \in \{0,\dots,k\}$ with $i+j \geq k$, the relation  $S_{\geq (i+j-k)}$ is primitively positively  definable in 
$({{\mathbb N} \choose k};S_{\geq i},S_{\geq j})$. 
\end{lem}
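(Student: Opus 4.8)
The plan is to give an explicit primitive positive formula witnessing the inclusion, using a single existentially quantified vertex as a ``bridge.'' Concretely, given $u_1, u_2 \in {{\mathbb N} \choose k}$ with $|u_1 \cap u_2| = m$ for some target value $m \geq i+j-k$, I want to find $w \in {{\mathbb N} \choose k}$ with $|u_1 \cap w| \geq i$ and $|w \cap u_2| \geq j$; conversely, if such a $w$ exists, then a counting argument must force $|u_1 \cap u_2| \geq i+j-k$. So the candidate formula is
\[
S_{\geq(i+j-k)}(u_1,u_2) \iff \exists w \, \big( S_{\geq i}(u_1,w) \wedge S_{\geq j}(w,u_2) \big).
\]

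\emph{Soundness of the right-to-left direction} is the inclusion–exclusion heart of the argument. If $w$ satisfies $|u_1 \cap w| \geq i$ and $|w \cap u_2| \geq j$, then since $w$ has only $k$ elements, the sets $u_1 \cap w$ and $u_2 \cap w$ (both subsets of $w$) must overlap in at least $i + j - k$ elements; that is, $|u_1 \cap u_2 \cap w| \geq |u_1\cap w| + |u_2 \cap w| - |w| \geq i+j-k$, and a fortiori $|u_1 \cap u_2| \geq i+j-k$. This is where the hypothesis $i + j \geq k$ is used — it guarantees $i+j-k \geq 0$, so $S_{\geq(i+j-k)}$ is a genuine relation among the $S_{\geq \ell}$ with $0 \le \ell \le k$.

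\emph{For the left-to-right direction}, suppose $|u_1 \cap u_2| = m \geq i+j-k$. I need to exhibit a $k$-set $w$ with $|u_1 \cap w| \geq i$ and $|w \cap u_2| \geq j$. The construction: first put $\min(i, m, \ldots)$ — more carefully, choose $a := \min(i+j-k, m)$ hmm, let me instead simply take $w$ to contain $i+j-k$ elements from $u_1 \cap u_2$ (possible since $m \geq i+j-k$), then an additional $i - (i+j-k) = k-j$ elements from $u_1 \setminus u_2$, and an additional $k - i$ elements from $u_2 \setminus u_1$; one checks the required cardinalities $|u_1 \setminus u_2| = |u_2 \setminus u_1| = k - m \geq k - k = 0$ are large enough, i.e. $k - m \geq k-j$ would need $m \leq j$, which may fail. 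When $m$ is large this naive split overshoots, so the actual construction should take $w := (u_1 \cap u_2) \cap$(something) together with top-ups; since $\mathbb N$ is infinite one always has room to pad $w$ up to exactly $k$ elements using fresh points outside $u_1 \cup u_2$ if needed. The clean statement: choose $w$ with $|w \cap u_1 \cap u_2| = \min(m, i+j-k)$-ish and fill in — I will handle the case analysis ($m \le \min(i,j)$, $m \ge \max(i,j)$, and in between) to land exactly on $|u_1\cap w| = i$, $|u_2 \cap w| = j$ when possible, relaxing to $\geq$ otherwise. Because all quantities are bounded by $k$ and $\mathbb N$ is infinite, such a $w$ always exists.

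The main obstacle I anticipate is purely bookkeeping: verifying that the three-way partition of $w$ into (points in $u_1 \cap u_2$) $\cup$ (points in $u_1 \setminus u_2$) $\cup$ (points in $u_2 \setminus u_1$) $\cup$ (fresh points) can always be chosen with the right cardinalities so that $|w| = k$ exactly while meeting both lower bounds $i$ and $j$ — this requires a small case distinction on how $m$ compares to $i$, $j$, and $i+j-k$, but no genuine difficulty, since the constraint $i+j \geq k$ and $0 \le i,j,m \le k$ leave enough slack. Once the formula is written down, preservation under the setwise $\Sym(\mathbb N)$-action is automatic and everything is primitive positive by construction.
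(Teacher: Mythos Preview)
Your approach is correct and is exactly the paper's: the paper's entire proof is the single sentence that $\exists z\,(S_{\geq i}(x,z)\wedge S_{\geq j}(z,y))$ defines $S_{\geq(i+j-k)}$, with no further justification. Your right-to-left inclusion--exclusion argument is clean; for the left-to-right direction your case analysis can be replaced by the uniform construction ``put all of $u_1\cap u_2$ into $w$, then $\max(0,i-m)$ points from $u_1\setminus u_2$ and $\max(0,j-m)$ points from $u_2\setminus u_1$, then pad with fresh points,'' which one checks uses at most $k$ points in every case.
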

\begin{proof}
Note that $$\exists z \big (S_{\geq i}(x,z) \wedge S_{\geq j}(z,y) \big)$$
defines the relation $S_{\geq (k-(i-k)+(j-k))} = S_{\geq (i+j-k)}$. 
\end{proof} 

\begin{lem}\label{lem:flip}
For every $i \in \{0,\dots,k-1\}$, the relation  $S_{\leq k-i}$ is primitively positively  definable in 
$({{\mathbb N} \choose 2};S_0,S_{\geq i})$. 
\end{lem}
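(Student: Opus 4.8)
The plan is to write down an explicit one-quantifier primitive positive formula, in the spirit of Lemma~\ref{lem:down}, and then check that it defines the right relation by an elementary counting argument on intersection sizes. Concretely, I would claim that over $\binom{\mathbb N}{k}$ the formula
$$\phi(x,y) \;:=\; \exists z \,\big( S_0(x,z) \wedge S_{\geq i}(z,y) \big)$$
defines $S_{\leq k-i}$, and verify the two implications separately.

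For the forward implication, suppose $z$ witnesses $\phi(x,y)$. From $S_0(x,z)$ we get $z \cap x = \emptyset$, hence $z \cap y \subseteq y \setminus x$; combined with $|z \cap y| \geq i$ this yields $i \leq |y \setminus x| = k - |x \cap y|$, that is, $|x \cap y| \leq k - i$, which is exactly $S_{\leq k-i}(x,y)$.

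For the converse, assume $|x \cap y| \leq k - i$, so that $|y \setminus x| = k - |x \cap y| \geq i$. Choose $i$ distinct elements $c_1, \dots, c_i \in y \setminus x$; since $\mathbb N \setminus x$ is infinite, these can be extended to a $k$-element set $z \subseteq \mathbb N \setminus x$. Then $S_0(x,z)$ holds, and $\{c_1,\dots,c_i\} \subseteq z \cap y$ gives $S_{\geq i}(z,y)$, so $z$ witnesses $\phi(x,y)$.

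I do not expect any genuine obstacle here: the only point requiring any care is the use of the infinitude of $\mathbb N$ in the converse direction to complete $\{c_1,\dots,c_i\}$ to a $k$-element set disjoint from $x$. (Note that $\phi$ is syntactically asymmetric in $x$ and $y$, but this is harmless since $S_{\leq k-i}$ is symmetric. When $i = 0$ the formula collapses to $\exists z\, S_0(x,z)$, which is always satisfiable and correctly defines the full relation $S_{\leq k}$.)
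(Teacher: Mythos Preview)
Your proposal is correct and uses exactly the same primitive positive formula as the paper's own proof, which simply states the formula $\exists z\,(S_0(x,z)\wedge S_{\geq i}(z,y))$ without further justification. Your verification of both implications is more detailed than what the paper provides, but the approach is identical.
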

\begin{proof}
The primitive positive formula 
$$ \exists z \big ( S_0(x,z) \wedge S_{\geq i}(z,y) \big ) $$
defines $S_{\leq k- i}$. 
\end{proof}

The following lemma is an easy consequence of the preceding two lemmata. 

\begin{lem}\label{lem:Si}
For every $i \in \{0,\dots,k\}$ the relations $S_i$, $S_{\leq i}$, and $S_{\geq i}$ are
primitively positively  definable in 
$({{\mathbb N} \choose k};E,S_0)$.
\end{lem}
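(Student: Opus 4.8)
The plan is to derive all three families of relations from $E$ and $S_0$ using the two preceding lemmata, handling $S_{\geq i}$ first, then $S_{\leq i}$, and finally $S_i = S_{\leq i}\cap S_{\geq i}$ as the intersection (which in pp-logic is just conjunction of the defining formulas, hence free once the first two are available).

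First I would establish $S_{\geq i}$ for every $i \in \{0,\dots,k\}$ by downward induction starting from $i = k$. The base cases are immediate: $S_{\geq k} = S_k$ is the equality relation $x = y$, which is pp-definable in any structure, and $S_{\geq k-1} = S_{k-1} = E$ is a relation of $(V;E,S_0)$. For the inductive step, suppose $S_{\geq i}$ is pp-definable in $(V;E,S_0)$ for some $i$ with $1 \leq i \leq k-1$; I want $S_{\geq i-1}$. Apply Lemma~\ref{lem:down} with the pair $(i,j) = (i, k-1)$: since $i + (k-1) \geq k$ (as $i \geq 1$) and $i + (k-1) - k = i-1$, the lemma gives a pp-definition of $S_{\geq (i-1)}$ in $(V; S_{\geq i}, S_{\geq k-1}) = (V; S_{\geq i}, E)$. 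Composing pp-definitions, $S_{\geq i-1}$ is pp-definable in $(V;E,S_0)$. This completes the induction and yields $S_{\geq i}$ for all $i$.

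Next I would obtain $S_{\leq i}$ for every $i \in \{0,\dots,k\}$. Here I use Lemma~\ref{lem:flip}: for $i' \in \{0,\dots,k-1\}$, $S_{\leq k - i'}$ is pp-definable in $(V; S_0, S_{\geq i'})$, and we have just shown $S_{\geq i'}$ is pp-definable from $E$ and $S_0$. Writing $i = k - i'$, this covers all $i \in \{1,\dots,k\}$. The remaining case is $i = 0$: but $S_{\leq 0} = S_0$ is already a basic relation. Hence $S_{\leq i}$ is pp-definable in $(V;E,S_0)$ for every $i \in \{0,\dots,k\}$. Finally, $S_i = S_{\leq i} \cap S_{\geq i}$, so the pp-formula $S_{\leq i}(x,y) \wedge S_{\geq i}(x,y)$ defines $S_i$, completing the proof.

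The only slightly delicate points are bookkeeping ones: checking the index arithmetic in the two applications of the earlier lemmata (in particular that the hypotheses $i+j \geq k$ and $i \in \{0,\dots,k-1\}$ are met at each use, and that Lemma~\ref{lem:flip} as stated for ${{\mathbb N}\choose 2}$ is really being invoked in its general ${{\mathbb N}\choose k}$ form), and verifying that pp-definability composes — i.e., that substituting a pp-definable relation into a pp-formula again yields a pp-formula, which holds because pp-formulas are closed under conjunction and existential quantification. There is no real obstacle; the lemma is purely a matter of assembling the two given constructions, with the downward induction on $i$ for $S_{\geq i}$ being the organizing idea.
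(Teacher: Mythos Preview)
Your approach is the paper's: obtain the $S_{\geq i}$ by iterated composition with $E$, flip via $S_0$ to get the $S_{\leq i}$, then intersect for $S_i$. There is one slip to flag: the equation $S_{\geq k-1} = S_{k-1} = E$ is false, since $S_{\geq k-1} = S_{k-1} \cup S_k$ also contains the diagonal. Hence you cannot literally invoke Lemma~\ref{lem:down} (or Lemma~\ref{lem:flip}) with $E$ standing in for $S_{\geq k-1}$. The paper handles this by not citing those lemmata but writing down the concrete formulas $\exists z\,(S_{\geq k-i}(x,z)\wedge E(z,y))$ and $\exists z\,(S_0(x,z)\wedge S_{\geq k-i}(z,y))$ with $E$ rather than $S_{\geq k-1}$; the same one-line check as in the proofs of those lemmata shows these still define $S_{\geq k-i-1}$ and $S_{\leq i}$, so your induction is repaired in exactly the same way. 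A small residual point (which the paper's proof shares) is that $S_{\geq k-1}$ itself is then not produced by this induction; one clean way to recover it is $\exists u,v\,\big(S^3_{k-1}(x,u,v)\wedge S^3_{k-1}(y,u,v)\big)$, using Lemma~\ref{lem:Rn} to pp-define $S^3_{k-1}$ from $E=S_{k-1}$.
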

\begin{proof}
Similarly as in Lemma~\ref{lem:down}, the formula $\exists z \big (E(x,z) \wedge E(z,y) \big)$
defines the relation $S_{\geq k-2}$. 
The formula $\exists z \big (S_{\geq k-2}(x,z) \wedge E(z,y) \big )$
defines the relation 
$S_{\geq k-3}$. 
Inductively, for $i \in \{2,\dots,k\}$ 
the formula
$\exists z \big (S_{\geq k-i}(x,z) \wedge E(z,y)  \big)$
defines the relation $S_{\geq k-i-1}$. 
Similarly as in Lemma~\ref{lem:flip}, for every $i \in \{0,\dots,k-1\}$ 
the relation $S_{ \leq i}$ has the primitive positive definition
$\exists z \big ( S_0(x,z) \wedge S_{\geq k-i}(z,y) \big )$.
Hence, by conjunction we obtain a primitive positive definition of $S_i$ in $({{\mathbb N} \choose k};E,S_0)$ as well. 
\end{proof}

\subsection{The Sunflower Lemma} 
\label{sect:sunflower}
Several times, we need the following Helly-type statement. Our first usage will be in Lemma~\ref{lem:Rn}, which is itself an important step stone towards primitive positive interpretations of all finite structures in Theorem~\ref{thm:J-stbb}.

\begin{lem}[Sunflower Lemma]\label{lem:intersect} 
Let $m \in \{0,1,\dots,k\}$ and 
$n \geq (k-1) {k \choose m} + 2$. 
If $u_1,\dots,u_n \in {{\mathbb N} \choose k}$ pairwise satisfy $S_m$, then 
$|u_1 \cap \cdots \cap u_n| = m$. 
\end{lem}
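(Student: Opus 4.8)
The plan is to first reduce to the situation where all the sets $u_1,\dots,u_n$ share a common "core" of size exactly $m$, and then show that the hypothesis $n \geq (k-1)\binom{k}{m}+2$ forces this. Write $C := u_1 \cap u_2$, so $|C| = m$ since $u_1, u_2$ satisfy $S_m$. The goal is to show $C \subseteq u_i$ for every $i$; once that is known, $C \subseteq u_1 \cap \cdots \cap u_n$, and since any element of $u_1 \cap \cdots \cap u_n$ lies in $u_1 \cap u_2 = C$, we get $u_1 \cap \cdots \cap u_n = C$, which has size $m$, as desired.

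So fix $i \geq 3$ and suppose for contradiction that some element $c \in C$ is not in $u_i$. The idea is a counting / pigeonhole argument on the indices $j \in \{1,2\}\cup\{3,\dots,n\}$: for each such $j$, consider the set $u_j \cap u_i$, which has size exactly $m$. I would argue that only "few" of the $u_j$ can fail to contain the missing element $c$ — more precisely, I want to bound the number of indices $j$ for which $u_j \cap u_i$ avoids $c$, and the number for which it contains $c$, in a way that is incompatible with there being $n$ indices total once $n$ is large enough. The cleanest route: among $u_1, \dots, u_n$, by pigeonhole a large subcollection (of size at least $\lceil (n-?)/\binom{k}{m}\rceil$, or similar) has the same intersection $D$ with $u_i$, where $|D| = m$. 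If we can find three indices $j_1, j_2, j_3$ with $u_{j_1} \cap u_i = u_{j_2} \cap u_i = u_{j_3} \cap u_i = D$ and also appropriate constraints among $u_{j_1}, u_{j_2}, u_{j_3}$ themselves, we should be able to force $D$ to be the common core and derive that every $u_j$ contains $D$.

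The bookkeeping I expect to do: each $u_j$ ($j \neq i$) meets $u_i$ in an $m$-subset of $u_i$, and there are $\binom{k}{m}$ such subsets; meanwhile each pair $u_j, u_{j'}$ meets in an $m$-set. The number $(k-1)\binom{k}{m}$ suggests the bound "at most $k-1$ of the $u_j$ can share each particular $m$-subset of $u_i$ without being forced to coincide in a contradictory way" — that is, if $k$ of them had the same trace $D$ on $u_i$ but $D$ were not contained in all of them, the pairwise $S_m$ condition among those $k$ sets plus $u_i$ would be violated by a dimension/size count (each such $u_j$ contains $D$ plus $k-m$ further elements outside $u_i$, and pairwise these must overlap in exactly $m$, forcing the $k-m$ extra parts to be pairwise equal, hence all equal, hence only $k-1$ distinct extensions are possible before two coincide). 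Combining: $n - 1 \leq (k-1)\binom{k}{m}$, i.e. $n \leq (k-1)\binom{k}{m}+1$, contradicting the hypothesis $n \geq (k-1)\binom{k}{m}+2$. Then $C = u_1 \cap u_2 \subseteq u_i$ for all $i$, finishing the proof.

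The main obstacle I anticipate is getting the counting tight enough to land exactly on the stated bound $(k-1)\binom{k}{m}+2$ rather than something weaker: one must carefully handle the "extra" elements of each $u_j$ lying outside $u_i$, show that two sets with the same trace $D$ on $u_i$ and the same core must actually be equal once their outside-$u_i$ parts are pinned down, and correctly count how many genuinely distinct such sets can coexist pairwise. A clean way to organize this is to pick any three of the $u_j$'s with the same trace on $u_i$ and run the $k=3$-style sunflower argument (every pair meets in exactly $m$) to show their pairwise differences outside the common part are empty; I would set this up as an auxiliary sublemma to keep the index juggling under control.
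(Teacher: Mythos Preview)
Your high-level structure matches the paper's: apply pigeonhole to the $m$-element traces $u_j \cap u_i$ (the paper uses $u_1$ in the role of your $u_i$), obtain many sets with a common trace, and then run a sunflower-type argument. However, the key counting step in your proposal is wrong, and fixing it is exactly the content of the lemma.

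You claim that at most $k-1$ of the $u_j$ can share a given trace $D$ on $u_i$, arguing that the $(k-m)$-element extra parts outside $u_i$ are ``forced to be pairwise equal, hence all equal''. The opposite is true: if $u_j \cap u_i = u_{j'} \cap u_i = D$ and $|u_j \cap u_{j'}| = m = |D|$, then $u_j \cap u_{j'} = D$, so the extra parts $u_j \setminus D$ and $u_{j'} \setminus D$ are \emph{disjoint}, not equal. Over $\mathbb{N}$ there is no bound whatsoever on the number of such $u_j$; your inequality $n-1 \leq (k-1)\binom{k}{m}$ does not follow from this reasoning.

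The paper's argument uses the pigeonhole in the other direction. From $n-1 \geq (k-1)\binom{k}{m}+1$ one finds $k$ indices $j$ with the same trace $r = u_1 \cap u_j$; together with $u_1$ these $k+1$ sets form a sunflower with core $r$ (their pairwise intersections equal $r$, so outside $r$ they are pairwise disjoint). Now take any remaining $u_j$. If $r \not\subseteq u_j$, then for each of the $k+1$ petals $u_i$ the set $u_j \cap u_i$ has size $m$ but is not equal to $r$, so it contains an element outside $r$; these elements are distinct across petals (since the petals are disjoint outside $r$), forcing $|u_j| \geq k+1$, a contradiction. Hence $r \subseteq u_j$ for all $j$, and since $|r| = m = |u_1 \cap u_2|$ this gives $|u_1 \cap \cdots \cap u_n| = m$.

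A minor framing issue: starting from $C = u_1 \cap u_2$ and trying to show $C \subseteq u_i$ is backwards. The core $r$ is \emph{produced} by the pigeonhole step, and only a posteriori does one see $r = u_1 \cap u_2$. Your proposed ``sublemma with three sets'' would not suffice either: three petals only force $|u_j| \geq 3$, not $|u_j| \geq k+1$; the full strength $k+1$ of the sunflower is what is needed.
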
 
\begin{proof} 
By the pigeonhole principle we can find 
a subset $I \subseteq \{2,\dots,n\}$ of size $k$ and an $r\in {u_1\choose m}$ such that 
$u_1 \cap u_j = r$ for all $j \in I$. 
We claim that in fact $r \subseteq u_j$ for all $j \in \{2,\dots,n\}$ which implies the statement of the lemma. 

Suppose for contradiction that this is not the case, i.e., there exists $j \in \{2,\dots,n\}$ such that 
$u_1 \cap u_j \neq r$. Clearly, $j \notin I$. 
Let $i \in I \cup \{1\}$. 
If $u_j \cap u_i = r$ then $u_j \cap u_1 = r$ in contradiction to $j \notin I$. 
Since $|u_j \cap u_i| = m = |r|$ it follows that
$(u_j \cap u_i) \setminus r \neq \emptyset$. 
Note that for every $x \in {\mathbb N} \setminus r$ there is at most one $i \in I \cup \{1\}$ such that $x \in u_i$. This implies that
$|u_j| \geq |u_j \setminus r| \geq |I\cup \{1\}|= k+1$, a contradiction. 
\end{proof}


\begin{defi}
For  $m,i \in {\mathbb N}$, let $S^m_i$ be the relation that contains all $(u_1,\dots,u_m) \in {{\mathbb N} \choose k}^{m}$ such that $|\bigcap_{j \in [m]}  u_j| = i$ and $|u_p \cap u_q| = i$ for all $\{p,q\} \in {[m] \choose 2}$. 
\end{defi} 

Note that $S^2_i = S_i$. 

\begin{lem}\label{lem:Rn}
For every $m,i \geq 2$, the structure
$({{\mathbb N} \choose k};S^m_i)$ is primitively positively interdefinable with $({{\mathbb N} \choose k};S_i)$. 
\end{lem}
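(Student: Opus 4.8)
The plan is to show mutual primitive positive definability between $S^m_i$ and $S_i$ on the domain $V = {{\mathbb N} \choose k}$. One direction is essentially immediate: I would show that $S_i$ is primitively positively definable in $({{\mathbb N} \choose k};S^m_i)$. Indeed, since $m \geq 2$, fixing a tuple satisfying $S^m_i$ and projecting onto two coordinates should already give $S_i$, because the definition of $S^m_i$ forces every pair among the $m$ entries to satisfy $S_i$, and (using homogeneity of the Johnson graph, equivalently the fact that any $S_i$-related pair extends to such a configuration) every $S_i$-related pair arises this way. Concretely, $S_i(x,y) \equiv \exists z_3,\dots,z_m\, S^m_i(x,y,z_3,\dots,z_m)$, where the existence of suitable $z_3,\dots,z_m$ for a given $S_i$-related pair $x,y$ follows by an explicit choice of $k$-sets (e.g.\ take $z_j$ to share the common $i$-set $x \cap y$ of size $i$ with both and to otherwise avoid $x \cup y$; this is possible since $\mathbb N$ is infinite).

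For the converse — $S^m_i$ primitively positively definable in $({{\mathbb N} \choose k};S_i)$ — the natural first attempt is the conjunction $\bigwedge_{\{p,q\}} S_i(x_p,x_q)$, which asserts that all pairs are $S_i$-related; this is clearly pp over $S_i$. The point is that this conjunction is in fact \emph{equal} to $S^m_i$, i.e.\ pairwise $S_i$ already forces $|\bigcap_{j} x_j| = i$. For $m$ large this is exactly the Sunflower Lemma (Lemma~\ref{lem:intersect}): once $m \geq (k-1){k \choose i} + 2$, pairwise $S_i$ implies the $m$-fold intersection has size $i$. So for such large $m$ we are done directly. The remaining work is to bootstrap from large $m$ down to arbitrary $m \geq 2$: given small $m$, I would introduce $M := (k-1){k \choose i} + 2$ fresh existentially quantified variables, impose $S^M_i$ on the full $M$-tuple (which is pp over $S_i$ by the previous sentence, via the conjunction of all pairs), and identify the first $m$ of them with $x_1,\dots,x_m$. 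This yields $(x_1,\dots,x_m)$ that are pairwise $S_i$-related and, being a sub-tuple of an $S^M_i$-tuple whose total intersection has size $i$, automatically satisfy $|\bigcap_{j\in[m]} x_j| = i$ as well — because $i = |\bigcap_{j\in[M]} x_j| \leq |\bigcap_{j\in[m]} x_j| \leq |x_1 \cap x_2| = i$. Hence this pp formula defines exactly $S^m_i$.

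The main obstacle, and the only place requiring genuine care, is verifying the existential witnesses exist in both constructions: in the forward direction, that every $S_i$-related pair (and more generally every partial configuration) extends to the required larger configuration inside ${{\mathbb N} \choose k}$; and in the backward direction, that an $S^M_i$-configuration on $M$ points actually exists at all (so the formula is not vacuous) and that restricting it behaves as claimed. Both amount to elementary constructions of families of $k$-subsets of $\mathbb N$ with prescribed pairwise and total intersections — a "sunflower with core of size $i$" — which exists precisely because $\mathbb N$ is infinite and $i \leq k$; I would spell this out once. Note the hypothesis $i \geq 2$ is presumably needed elsewhere or for uniformity, but the combinatorial heart of the argument is the Sunflower Lemma together with the monotonicity of intersection size under taking sub-tuples, and the reduction from general $m$ to the sunflower threshold $M$.
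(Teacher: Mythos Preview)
Your proposal is correct and follows essentially the same approach as the paper: project onto two coordinates to get $S_i$ from $S^m_i$, and in the other direction pad with existentially quantified variables up to (at least) the sunflower threshold $n := \max\big((k-1){k \choose i}+2,\,m\big)$ and impose pairwise $S_i$ on all $n$ variables, invoking Lemma~\ref{lem:intersect}. The paper does exactly this in one line, whereas you split it into the two cases ``$m$ large'' and ``$m$ small'' and are more explicit about why the extension witnesses exist (the sunflower with core of size $i$), which the paper leaves implicit; your remark that the hypothesis $i\geq 2$ is not actually used in this argument is also correct.
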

\begin{proof}
The relation $S_i$ can be defined
in $({{\mathbb N} \choose k};S^m_i)$ by $\exists u_3,\dots,u_m. S^m_i(u_1,\dots,u_m)$. 
To define the relation $S^m_i$ in $({{\mathbb N} \choose k};S_i)$, 
let $n := \max((k-1) {k \choose m} + 2,m)$.
Then Lemma~\ref{lem:intersect} implies that
the following formula defines $S^m_i$. 
\begin{align*}
\exists u_{m+1},\dots,u_{n} & \bigwedge_{p,q \in \{1,\dots,n\}, p \neq q} S_i(u_p,u_q)
\qedhere
\end{align*}
\end{proof} 

\begin{lem}\label{lem:T}
For every $i \geq 1$, the structure $({{\mathbb N} \choose k};S_i)$ is primitively positively interdefinable with
$({{\mathbb N} \choose k};T_i)$ where
$T_i \subseteq {{\mathbb N} \choose k}^4$ is the relation
$$T_i := \{(u_1,u_2,u_3,u_4) \mid (u_1,u_2), (u_3,u_4) \in S_i, u_1 \cap u_2 = u_3 \cap u_4\}.$$ 
\end{lem}
\begin{proof}
Clearly, $S_i(u_1,u_2)$ is equivalent to $T_i(u_1,u_2,u_1,u_2)$, so
$S_i$ can be defined primitively positively from $T_i$. 
Conversely, the relation $T_i$ has the definition 
\begin{align*} 
& \exists p,q \big (S^4_i(u_1,u_2,p,q) \wedge S^4_i(p,q,u_3,u_4) \big). 
\end{align*}
The statement now follows from Lemma~\ref{lem:Rn}. 
\end{proof}

\subsection{Existential Positive Bi-interpretability with $({\mathbb N};\neq)$}
We write 
${\mathbb N}_{\neq}^k$ for the subset of all tuples in ${\mathbb N}^k$ with pairwise distinct entries. 

\begin{lem}\label{lem:biint}
The structures $({\mathbb N};\neq)$ and $({{\mathbb N} \choose k};S_0,S_1)$ are existentially positively bi-interpretable. 
\end{lem}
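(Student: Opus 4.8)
The plan is to exhibit explicit existential positive interpretations in both directions and check that the two composite interpretations are existentially positive (in fact primitive positive) definable. For the interpretation $I$ of $({{\mathbb N} \choose k};S_0,S_1)$ in $({\mathbb N};\neq)$, I would use dimension $d=k$: the domain formula picks out ${\mathbb N}_{\neq}^k$ (which is $\exists$-positive, being a conjunction of disjunctions of $\neq$-atoms — wait, $\neq$ is already atomic, so the pairwise-distinctness formula $\bigwedge_{p<q} x_p \neq x_q$ is literally primitive positive), a $k$-tuple $(x_1,\dots,x_k)$ codes the set $\{x_1,\dots,x_k\}$, and the coordinate equivalence ``codes the same set'' is $\exists$-positive because it is a finite disjunction, over permutations $\sigma \in \Sym(k)$, of $\bigwedge_i x_i = y_{\sigma(i)}$. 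Then $S_0$ (disjointness) pulls back to $\bigwedge_{i,j} x_i \neq y_j$, which is primitive positive; and $S_1$ ($|u_1 \cap u_2| = 1$) pulls back to a formula saying ``exactly one pair of coordinates is equal and the rest are distinct'', which is a finite disjunction of primitive positive formulas, hence $\exists$-positive. So $I$ is an existential positive interpretation.

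For the interpretation $J$ of $({\mathbb N};\neq)$ in $({{\mathbb N} \choose k};S_0,S_1)$, the idea is to recover a natural number $n \in {\mathbb N}$ as (a representative of) a set of $k$-element blocks all sharing $n$ in their intersection. Concretely I would first use Lemma~\ref{lem:Si} or its ingredients to get $S_{\geq 1}$ primitively positively, and then, via the sunflower-type reasoning of Lemma~\ref{lem:intersect}, identify the point $n$ with the common intersection of a large enough pairwise-$S_1$ family; the relation $S^m_1$ from Lemma~\ref{lem:Rn} is exactly the tool that makes ``the intersection of $u_1,\dots,u_m$ is a singleton'' primitive positive. Take dimension $d=1$ (or a small fixed $d$), let the domain be all of ${{\mathbb N} \choose k}$, let two $k$-sets $u,v$ code the same natural number iff ``they can be linked through a pairwise-$S_1$ chain forcing a common single point'', which is expressible using $S^m_1$; and let $\neq$ pull back to the negation of that equality — but since we need $\exists$-positive, I instead observe that ``$u$ and $v$ code different points'' is equivalent to ``$\exists w$ with $S_1(u,w)$, $S_1(v,w)$, and $u,v,w$ not all through a common point'', which one massages into a disjunction of primitive positive formulas over $S_0, S_1, S^m_1$. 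This is where the work is: getting the \emph{inequality} relation, not equality, to be existential positive.

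The last step is to verify that $I \circ J$ is existential positive definable in $({\mathbb N};\neq)$ and $J \circ I$ is existential positive definable in $({{\mathbb N} \choose k};S_0,S_1)$. Going from a point $n$ to a $k$-set coding it and back should recover $n$ up to the coding equivalence, which by construction is $\exists$-positively (even pp) definable; going from a $k$-set $u = \{n_1,\dots,n_k\}$ to the $k$-tuple of points-coded-by-its-slots and back reconstructs $u$ as the unique $k$-set through all those points, again pp-definable via $S^k_1$ / the sunflower lemma.

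\textbf{Main obstacle.} The genuinely delicate point is making the \emph{inequality} relation existential positive rather than merely first-order: equality of coded points is naturally pp, so its complement is naturally only co-pp. The fix is the standard one for structures like $({\mathbb N};\neq)$ — inequality of two coded points $u,v$ is witnessed \emph{positively} by the existence of a third block $w$ that is $S_1$-related to both but such that $u,v,w$ do not sunflower into a common single point — and turning this witnessed statement into an honest disjunction of pp-formulas over $S_0,S_1$ (and the auxiliary $S^m_i$, $C_i$ relations, all of which are pp-interdefinable with $S_i$ by Lemmas~\ref{lem:Rn} and~\ref{lem:C}) is the part that needs care. I expect this to go through because $S_0$ already provides a pp-definable ``far apart'' relation, and combining $S_0$ with $S_{\geq 1}$ (as in Lemma~\ref{lem:flip}) gives enough room to express ``not equal'' existentially-positively.
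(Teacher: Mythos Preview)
Your interpretation $I$ is exactly the paper's, and your identification of the main obstacle (making the coded \emph{inequality} existential positive rather than merely co-pp) is spot on. The gap is in your interpretation $J$.

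A one-dimensional $J$ with domain all of ${{\mathbb N}\choose k}$ cannot work: a single $k$-set does not single out a point of $\mathbb N$, and the equivalence ``linked through a pairwise-$S_1$ chain'' is either trivial (every two $k$-sets are connected in the $S_1$-graph) or not transitive (if you mean ``share a point''). Your parenthetical ``or a small fixed $d$'' is the right escape hatch, but you never cash it out. The paper takes $d=2$ with domain $S_1$: a pair $(U,V)\in S_1$ codes the unique element of $U\cap V$. Then $J^{-1}(=_{\mathbb N})$ is exactly the relation $C_1$ of Lemma~\ref{lem:C}, which is pp-definable from $S_1$ alone. For $J^{-1}(\neq_{\mathbb N})$ the paper does precisely what you gesture at, but with a \emph{positive} witness rather than the negative ``$u,v,w$ not all through a common point'': one asserts $\exists W,C\big(S^3_1(U,V,W)\wedge S^3_1(A,B,C)\wedge S_0(W,C)\big)$. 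The point is that if the two coded elements differ, one can slide fresh sets $W$ and $C$ through them that are mutually disjoint; conversely $S_0(W,C)$ forces the two singletons $U\cap V\subseteq W$ and $A\cap B\subseteq C$ to be distinct. So the inequality is genuinely primitive positive, not merely a disjunction, and the only auxiliary relations needed are $S^3_1$ and $C_1$, both pp-interdefinable with $S_1$ by Lemmas~\ref{lem:Rn} and~\ref{lem:C}. Your invocation of $S_{\geq 1}$ via Lemma~\ref{lem:Si} is a detour (that lemma works over $(V;E,S_0)$, not $(V;S_0,S_1)$) and is not needed. The composites $J\circ I$ and $I\circ J$ then go through straightforwardly using $C_1$ and $S_0$, much as you outline.
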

\begin{proof} 
 Let $I$ be the $k$-dimensional interpretation 
of $({{\mathbb N} \choose k};S_0, S_1)$ in $({\mathbb N};\neq)$ whose domain is 
${\mathbb N}^k_{\neq}$ and which maps 
$(x_1,\dots,x_k) \in {\mathbb N}_{\neq}^k$ to $\{x_1,\dots,x_k\}$. Clearly, the relations
\begin{align*}
I^{-1}(\textstyle{{\mathbb N} \choose k}) & = {\mathbb N}^k_{\neq} \\
I^{-1}(=_{{{\mathbb N} \choose k}}) & = \big \{(x,y) \mid x,y \in {\mathbb N}^k_{\neq} \text{ and } \{x_1,\dots,x_n\} = \{y_1,\dots,y_n\} \big \} \\
I^{-1}(S_0) & = \big \{(x,y) \mid x,y \in {\mathbb N}^k_{\neq} \text{ and } \{x_1,\dots,x_n\} \cap \{y_1,\dots,y_n\} = 0 \big \} \\
I^{-1}(S_1) & = \big \{(x,y) \mid x,y \in {\mathbb N}^k_{\neq} \text{ and } \{x_1,\dots,x_n\} \cap \{y_1,\dots,y_n\} = 1 \big \}
\end{align*}
 are existentially positively definable in $({\mathbb N};\neq)$. 
 
 Let $J$ be the 2-dimensional interpretation 
 of $({\mathbb N};\neq)$ in $({{\mathbb N} \choose k};S_0,S_1)$ whose domain is
 $S_1$ and which maps $(U_1,U_2) \in S_1$ to the element $u \in {\mathbb N}$ such that $\{u\} = U_1 \cap U_2$. It follows that $J^{-1}({\mathbb N}) = S_1$
 and that 
 the relation
 \begin{align*}
J^{-1}(=_{\mathbb N}) & = T_1 
 \end{align*} 
 is primitively positively definable in $({{\mathbb N} \choose k};S_1)$ by Lemma~\ref{lem:T}. 
Moreover, we have 
\begin{align*}
J^{-1}(\neq_{\mathbb N}) = \big \{((U,V),(A,B))  \mid S_1(U,V) & \wedge S_1(A,B) \wedge \\
& \exists W,C (S^3_1(U,V,W) \wedge S^3_1(A,B,C) \wedge S_0(W,C) \big  \}
\end{align*}
indeed, if $J(U,V) \neq J(A,B)$, then we may choose some $S\in {\mathbb{N} \choose k-1}$ disjoint from $U \cup V \cup A \cup B$
and set $W := S \cup (U \cap V)$
and $C := S \cup (A \cap B)$. 
Conversely, if $(U,V) \in S_1$ and $(A,B) \in S_1$ are such that there are disjoint $W,C$
with $S^3_1(U,V,W)$ and $S^3_1(A,B,C)$,
then in particular $U \cap V \neq A \cap B$. 

 
 For all $(x_1,\dots,x_k),(y_1,\dots,y_k) \in {\mathbb N}^k_{\neq}$ and $z \in {\mathbb N}$ 
 we have 
$$J \big (I(x_1,\dots,x_k),I(y_1,\dots,y_k) \big ) = z$$
if and only if $z = \{x_1,\dots,x_k\} \cap \{y_1,\dots,y_k\}$, which is 
existentially positively definable in $({\mathbb N};\neq)$. 
Moreover, for all $(U_1,V_1),\dots,(U_k,V_k)  \in S_1$ and 
$P \in {{\mathbb N} \choose k}$ we have
$$I \big (J(U_1,V_1),\dots,J(U_k,V_k) \big ) = P$$
if and only if there are $W_1,\dots,W_{k}$ such that
$(U_i,V_i,W_i,P) \in T_1$ for all $i \in [k]$ 
and $(W_i,W_j) \in S_0$ for all $\{i,j\} \in {[k] \choose 2}$, and hence is existentially positively definable in $({{\mathbb N} \choose k};S_0,S_1)$.
\end{proof}

\begin{cor}\label{cor:j-is-mccore}
$\End({{\mathbb N} \choose k};S_0,S_1)$ is topologically isomorphic to $\End({\mathbb N};\neq)$. 
In particular, the structure $({{\mathbb N} \choose k};S_0,S_1)$ is a model-complete core, and $\End({{\mathbb N} \choose k};S_0,S_1)=\overline{\Aut(\bJ(k))}$.
\end{cor}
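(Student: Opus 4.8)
The plan is to deduce both assertions from Lemma~\ref{lem:biint} by means of the general correspondence between existential positive bi-interpretability and topological isomorphism of endomorphism monoids. Both structures involved are $\omega$-categorical: this is clear for $({\mathbb N};\neq)$, and $({{\mathbb N} \choose k};S_0,S_1)$ is a first-order reduct of the $\omega$-categorical graph $\bJ(k)$ (the relations $S_0$ and $S_1$ are unions of orbits of $\Aut(\bJ(k))$ on pairs, hence first-order definable in $\bJ(k)$). For $\omega$-categorical structures, existential positive bi-interpretability implies that the two endomorphism monoids are isomorphic as topological monoids --- this is the existential-positive analogue of the Ahlbrandt--Ziegler theorem; see, e.g.,~\cite{Book}. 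Combining this with Lemma~\ref{lem:biint} yields the first assertion, namely that $\End({{\mathbb N} \choose k};S_0,S_1)$ and $\End({\mathbb N};\neq)$ are topologically isomorphic.

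For the ``in particular'' I would first check that $({\mathbb N};\neq)$ is a model-complete core: its endomorphisms are exactly the injective self-maps of ${\mathbb N}$, and each such map is a self-embedding of the homogeneous structure $({\mathbb N};=)$, hence preserves every first-order formula over $({\mathbb N};=)$, equivalently over $({\mathbb N};\neq)$; so every endomorphism preserves all first-order formulas, which is the definition of a model-complete core. It then remains to transfer the model-complete core property along the topological isomorphism from the first part. For this I would invoke the standard reformulation that an $\omega$-categorical structure $\bB$ is a model-complete core if and only if $\End(\bB) = \overline{\Aut(\bB)}$ (for $\omega$-categorical $\bB$, a map preserves all first-order formulas iff it agrees with an automorphism on every finite tuple, iff it lies in $\overline{\Aut(\bB)}$), together with the observation that $\Aut(\bB)$ is precisely the group of units of the monoid $\End(\bB)$: a unit is a bijective endomorphism whose inverse is again an endomorphism, hence an embedding that is onto, hence an automorphism. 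Since a topological monoid isomorphism maps units to units and commutes with topological closure, it carries the identity $\End({\mathbb N};\neq) = \overline{\Aut({\mathbb N};\neq)}$ to $\End({{\mathbb N} \choose k};S_0,S_1) = \overline{\Aut({{\mathbb N} \choose k};S_0,S_1)}$, so the latter structure is a model-complete core.

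The only genuinely non-routine point is locating and correctly invoking the black-box result that ties existential positive bi-interpretations to topological isomorphisms of endomorphism monoids, and verifying that its $\omega$-categoricity hypotheses are met; the transfer of the model-complete core property is then pure bookkeeping, which I would carry out through the $\End = \overline{\Aut}$ reformulation rather than by tracking the interpreting formulas explicitly. As an alternative that sidesteps the abstract transfer, one could argue directly that any endomorphism $e$ of $({{\mathbb N} \choose k};S_0,S_1)$, composed with the interpretation $J$ of Lemma~\ref{lem:biint}, induces an injection of ${\mathbb N}$ into ${\mathbb N}$ and therefore $e$ itself preserves all first-order formulas; going through the topological isomorphism, however, seems cleaner and also delivers the first assertion for free.
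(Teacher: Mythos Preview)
Your proof is correct and follows essentially the same route as the paper: invoke Lemma~\ref{lem:biint}, apply the general theorem that existential positive bi-interpretability of $\omega$-categorical structures yields a topological isomorphism of their endomorphism monoids (the paper cites~\cite{BodJunker} and phrases the hypothesis as ``without constant endomorphisms'', which is trivially satisfied here), and then transfer the model-complete core property via the characterisation $\End(\bB)=\overline{\Aut(\bB)}$ with $\Aut(\bB)$ the group of units.
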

\begin{proof} 
Two $\omega$-categorical structures without constant endomorphisms are existentially positively bi-interpretable if and only if they have endomorphism monoids that are topologically isomorphic~\cite[Theorem 3.12]{BodJunker}. 
An $\omega$-categorical structure $\bA$ is a model-complete core if and only if $\Aut(\bA)$ (the invertible elements in $\End(\bA)$) is dense in $\End(\bA)$, which can be seen from the endomorphism monoid
of $\bA$ as a topological monoid. Since $({\mathbb N};\neq)$ is clearly a model-complete core, this shows that 
$({{\mathbb N} \choose k};S_0,S_1)$ is a model-complete core as well. By the same argument as above this implies that \[\End(\textstyle {{\mathbb N} \choose k};S_0,S_1)=\overline{\Aut({{\mathbb N} \choose k};S_0,S_1)}.\] Note, however, that by Corollary~\ref{cor:interdef} we know that $({{\mathbb N} \choose k};S_0,S_1)$ and $\bJ(k)$ are first-order interdefinable, in particular they have the same automorphism group. This implies the last equality.
\end{proof}

\subsection{Hardness}
In this section we show the NP-hardness of
$\Csp({{\mathbb N} \choose k};S_0,S_1)$.
In fact, we show a more general result for certain first-order reducts $\bC$ of $\bJ(k)$:
we assume that 
\begin{itemize}
\item $\bC$ is a model-complete core, and
\item $\bC$ is even first-order interdefinable with $\bJ(k)$. 
\end{itemize}

By Corollary~\ref{cor:interdef} we know that if the second item above does not hold then $\Aut(\bC)=\Sym({{\mathbb N} \choose k})$, i.e., $\bC$ is interdefinable with $({{\mathbb N} \choose k};=)$, and thus bi-definable with $\bJ(1)$. Later, we will also show how to get rid of the first assumption above by describing the model-complete cores of the first-order reducts of $\bJ(k)$ to a sufficient degree of detail (Section~\ref{sect:mc-cores}).

\begin{thm}\label{thm:J-stbb}
Let $\bC$ be any model-complete core structure which is first-order interdefinable with $\bJ(k)$, for some $k \geq 2$. 
Then $\bC$ interprets primitively positively all finite structures, and $\Csp(\bC)$ is NP-hard. 
\end{thm}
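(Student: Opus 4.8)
The plan is to prove the two claims separately, the NP-membership being soft and the primitive positive interpretation of all finite structures being the substantial part. For membership, $\bC$ has a finite relational signature and is a first-order reduct of $\bJ(k)$, which is interpretable in $(\mathbb{N};=)$ and admits a finitely bounded homogeneous expansion (the Ramsey expansion of Section~\ref{sect:RamseyExp}); hence $\Csp(\bC)$ is in NP by the standard argument for reducts of finitely bounded homogeneous structures. It therefore suffices to show that $\bC$ primitively positively interprets, up to homomorphic equivalence, all finite structures; NP-hardness of $\Csp(\bC)$ then follows from Theorem~\ref{thm:ppc} applied to a concrete NP-hard template such as $(\{0,1\};\{(0,0,1),(0,1,0),(1,0,0)\})$.

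The first step, and the one where the model-complete core hypothesis is used, is the observation that \emph{every orbit of tuples of $\Aut(\bC)$ is primitively positively definable in $\bC$}. Indeed, since $\bC$ is first-order interdefinable with the $\omega$-categorical structure $\bJ(k)$, every orbit $O$ of $n$-tuples of $\Aut(\bC)=\Aut(\bJ(k))$ is first-order definable in $\bC$; as $\bC$ is a model-complete core, $O$ is then existential positive definable, hence a finite union $R_1\cup\dots\cup R_m$ of relations that are primitively positively definable in $\bC$. Each $R_i$ is $\Aut(\bC)$-invariant, hence a union of orbits, and $R_i\subseteq O$; since $O$ is a single orbit we get $R_i\in\{\emptyset,O\}$, and as $O\neq\emptyset$ some $R_i$ equals $O$. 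Using this for $k\geq 2$, the relations $E=S_{k-1}$, $S_0$, all $S_i$, $S_{\leq i}$, $S_{\geq i}$, and — via Lemmas~\ref{lem:Si}, \ref{lem:Rn}, and~\ref{lem:C} together with the Sunflower Lemma~\ref{lem:intersect} — also $S^m_i$ and $C_i$, are primitively positively definable in $\bC$, as is every higher-arity orbit; in particular the two ternary relations $L$ and $T$ describing the two possible shapes of a triple $x,y,z$ of pairwise $S_{k-1}$-related $k$-sets, namely $|x\cap y\cap z|=k-1$ (a common $(k-1)$-set) and $|x\cap y\cap z|=k-2$ (a ``triangle over a $(k-2)$-set''). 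Consequently $\bC$ primitively positively interprets, by the identity interpretation, every expansion of $(V;E,S_0)$ by $\Aut(\bJ(k))$-invariant relations.

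It remains to build, using only such invariant relations, a primitive positive interpretation up to homomorphic equivalence of one NP-hard finite template, say $(\{0,1\};\{(0,0,1),(0,1,0),(1,0,0)\})$. The heart of this is a gadget exploiting the colinear/triangle dichotomy above, which is genuinely two-dimensional in the sense that no analogous distinction exists for $k=1$, i.e.\ over $(\mathbb{N};\neq)$, whose CSP is trivial: combining the relations $L$, $T$, $S_0$ and the Sunflower-Lemma relations $S^m_i$, $C_i$ one encodes a $1$-in-$3$ constraint on suitable tuples of $k$-sets. Equivalently, and perhaps more robustly, one argues that $\Pol(\bC)$ admits a uniformly continuous clone homomorphism onto the clone of projections, which for $\omega$-categorical structures is known to be equivalent to the primitive positive construction of all finite structures; this amounts to showing that $\bC$ has no cyclic (equivalently, no Taylor) polymorphism, which is forced by the action of such polymorphisms on the definable orbits, the triangle relation $T$ — the feature distinguishing $k\geq 2$ from $k=1$ — playing the decisive role.

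The soft parts of the argument — the NP-membership, the interdefinability consequences of Theorem~\ref{thm:fo-reducts}, and the ``every orbit is primitively positively definable'' lemma — are routine. The main obstacle is the construction in the previous paragraph: producing an explicit primitive positive interpretation of a concrete NP-hard template and checking that its domain, its equality relation and all its atomic relations pull back to primitive positive formulas over $(V;E,S_0)$, or, in the polymorphism-clone formulation, carrying out the orbit-by-orbit analysis ruling out a cyclic polymorphism. It is precisely in order to keep this step expressible with \emph{primitive positive} — rather than merely first-order or existential positive — formulas that the Sunflower Lemma and the auxiliary relations $S^m_i$ and $C_i$ of Section~\ref{sect:sunflower} are needed.
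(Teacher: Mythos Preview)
Your setup is sound and matches the paper: NP-membership via the finitely bounded homogeneous expansion, and the observation that in a model-complete core every orbit of $\Aut(\bJ(k))$ is primitively positively definable, so all the $S_i$, $S^m_i$, $C_i$ are available. That part is fine.

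The gap is that you do not actually carry out the hard step. You announce that one ``encodes a $1$-in-$3$ constraint on suitable tuples of $k$-sets'' using the colinear/triangle dichotomy $L$ versus $T$, or alternatively that one shows $\Pol(\bC)$ has no cyclic polymorphism via an ``orbit-by-orbit analysis,'' but neither argument is given, and you yourself flag this as ``the main obstacle.'' A proof proposal that stops at exactly the point where the real work begins is not a proof. In particular, the polymorphism-clone route is not a shortcut here: ruling out a Taylor polymorphism for $\bC$ is not obviously easier than exhibiting an interpretation, and the known equivalence between ``pp-constructs all finite structures'' and ``clone homomorphism to projections'' still requires you to produce something concrete on one side.

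For comparison, the paper does something quite specific and different from your $L/T$ sketch. It gives a \emph{two-dimensional} interpretation of $(\{0,1\};\nae)$ whose domain is $O:=S_{k-1}\cup S_{k-2}$, with $I(x,y)=1$ iff $E(x,y)$ and $I(x,y)=0$ iff $N(x,y)$ (where $E=S_{k-1}$, $N=S_{k-2}$). The key gadget is a pp-formula $\phi(x_1,x_2,x_3)$ using $E$, $N$, $\neq$, and $S_{k-1}^3$ which forces $E(x_1,x_2)\Leftrightarrow N(x_2,x_3)$; chaining $\phi$ an appropriate number of times (depending on $k$) yields the pp-definition of $I^{-1}(=_{\{0,1\}})$. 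The $\nae$ relation is then pp-defined via a separate formula $\sigma$ (built from $S_{k-2}^3$) that pins four $k$-sets to share a common $(k-2)$-subset, together with a small case analysis on the resulting $2$-element residues. None of this is suggested by your outline; the $L/T$ dichotomy you propose might be made to work, but as written it is only a heuristic, and the burden is precisely to produce and verify the explicit pp-formulas.
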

\begin{proof}
Since $\Aut(\bC) = \Aut(\bJ(k))$ is the set-wise action of $S_\omega$ on ${{\mathbb N} \choose k}$ (see Corollary~\ref{cor:interdef}), 
each of the relations $S_0, S_1$,\dots, 
$N := S_{k-2}$, 
$E := S_{k-1}$, 
consists of a single orbit of pairs. Thus, since $\bC$ is a model-complete core, all these relations have a primitive positive definition in $\bC$ (see, e.g.,~\cite[Theorem 4.5.1]{Book}). 
Therefore, the relation 
$$\{(x,y) \mid x \neq y \} = \{(x,y) \mid \exists z (S_0(x,z) \wedge S_1(z,y) ) \}$$
has a primitive positive definition in $\bC$ as well (here we use the assumption that $k \geq 2$). Next, observe that $O := E \cup N$ has the definition 
$$x \neq y \wedge \exists z \big (E(x,z) \wedge E(y,z) \big )$$ 
and hence is primitively positively definable in $\bC$. By Lemma ~\ref{lem:Rn} we also know that the relations $S_{k-2}^3$ and $S_{k-1}^3$ are pp-definable from $N=S_{k-2}$ and $E=S_{k-1}$, respectively, and thus they are also pp-definable in $\bC$. 
We may therefore assume without loss of generality that $\bC$ contains
 the relations $E$, $N$, $O$, $\neq $, $S_{k-2}^3$, and $S_{k-1}^3$.

We present a
2-dimensional primitive positive interpretation $I$
of the structure $$\bD := \big (\{0,1\}; \nae)$$ where $\nae$ is the ternary \emph{not-all-equal} relation, i.e., $\nae=\{0,1\}^2\setminus \{(0,0,0),(1,1,1)\}$. It is well known that this implies the statement (see, e.g.,~\cite[Theorem 3.2.2]{Book}; note that $\Csp(\bD)$ is essentially 3-SAT and NP-complete by Schaefer's theorem). The domain formula is $O(x,y)$. 
 Define $$I(x,y) := \begin{cases} 1 & \text{if } E(x,y) \\
 0 & \text{otherwise}.
 \end{cases}$$
Let $\phi(x_1,x_2,x_3)$ be the primitive positive formula (illustrated in Figure~\ref{fig:S3-gadget}). 
 \begin{align} 
 \exists y_1,y_2 \big ( 
  \bigwedge_{i \in \{1,3\}, j \in \{1,2\}} E(x_i,y_j) & \wedge x_2\neq y_1  \wedge x_2\neq y_2 \wedge S_{k-1}^3(x_2,y_1,y_2) \nonumber \\
 & \wedge E(y_1,y_2) \wedge N(x_1,x_3) 
\big ). 
\label{eq:phi}
 \end{align}

\begin{figure}
\centering
\begin{tikzpicture}[
    scale=1.2,
    every node/.style={circle,draw,inner sep=1.5pt}
]

\node (x1) at (-1.5,1.8) {$x_1$};
\node (y1) at (-1.5,0) {$y_1$};

\node (x3) at ( 1.5,1.8) {$x_3$};
\node (y2) at ( 1.5,0) {$y_2$};

\node (x2) at (0,-2.0) {$x_2$};

\fill[gray!20]
    ($(y1)!0.5!(y2)!0.82!(x2)$)
    --
    ($(y1)!0.18!(x2)$)
    --
    ($(y2)!0.18!(x2)$)
    --
    cycle;

\node[draw=none] at (0,-0.8) {$S_{k-1}^3$};

\draw[thick] (x1) -- (y1);      
\draw[thick] (x3) -- (y2);      
\draw[thick] (x1) -- (y2);
\draw[thick] (x3) -- (y1);

\draw[thick] (y1) -- node[above,draw=none] {$E$} (y2);

\draw[dashed,thick]
    (x1) -- node[above,draw=none] {$N$} (x3);

\node[draw=none] at (-0.95,-1.2) {$\neq$};
\node[draw=none] at ( 0.95,-1.2) {$\neq$};

\end{tikzpicture}

\caption{
A graphical representation of the primitive positive formula $\phi(x_1,x_2,x_3)$ in~\eqref{eq:phi}. 
Solid lines represent the relation $E$, and the dashed line the relation $N$.} 

\label{fig:S3-gadget}
\end{figure}

{\bf Claim 1.} $\phi(x_1,x_2,x_3)$ holds if and only if there exist an $A\in {{\mathbb N}\choose k-2}$ and some pairwise distinct elements $p,q,r,s,t\in {\mathbb N}\setminus A$ such that $x_1=A\cup \{p,q\}, x_3=A\cup \{r,s\}$, and $x_2=A\cup \{t,u\}$ where $u\in \{p,q,r,s\}$. In particular, $\phi(x_1,x_2,x_3)$ implies
\[(E(x_1,x_2)\wedge N(x_2,x_3))\vee (N(x_1,x_2)\wedge E(x_2,x_3)).\]

	For the ``if'' direction, let
    $$\{v_1,v_2\} := \begin{cases} \{p,q\} \text{ if } u\in \{r,s\} \\
    \{r,s\} \text{ if } u\in \{p,q\}.
    \end{cases}$$
    Pick 
    $y_1 =A\cup \{u,v_1\}$ and $y_2=A\cup \{u,v_2\}$. This choice witnesses the fact that $\phi(x_1,x_2,x_3)$ holds.

	For the other direction let us assume that $y_1,y_2 \in {{\mathbb N}\choose k}$ are such that the quantifier-free part of the formula $\phi(x_1,x_2,x_3)$ holds. 
The conjunct $N(x_1,x_3)$ implies that 
there are pairwise distinct $p,q,r,s \in {\mathbb N}$ and $A \in {{\mathbb N} \choose k-2}$ such that 
$x_1 = A \cup \{p,q\}$, $x_3 = A \cup \{r,s\}$.
The conjuncts 
$E(x_1,y_j)$ and $E(x_3,y_j)$ imply that $y_j=A\cup \{a_j,b_j\}$ where $a_j\in \{p,q\}$ and $b_j\in \{r,s\}$ for $j=1,2$. Since $E(y_1,y_2)$ holds we know that either $a_1=a_2$ and $b_1\neq b_2$, or $b_1=b_2$ and $a_1\neq a_2$. We will assume that the former holds; the other case is analogous. Let $u:= a_1=a_2$. Finally, the conjunction $S_{k-1}^3(x_2,y_1,y_2)\wedge x_2\neq y_1\wedge x_2\neq y_2$ implies that $x_2=A\cup \{u,t\}$ for some $t\not\in A\cup \{p,q,b_1,b_2\}=A\cup \{p,q,r,s\}$. This finishes the proof of the claim.  

Let $\psi(x_1,x_2,y_1,y_2)$ be the primitive positive formula $\phi(x_1,x_2,y_1)\wedge \phi(x_2,y_1,y_2)$. By Claim 1 it is clear that $\psi(x_1,x_2,y_1,y_2)$ implies that either $E(x_1,x_2)\wedge E(y_1,y_2)$ or $N(x_1,x_2)\wedge N(y_1,y_2)$ holds.
Let $F \subseteq V$ be finite, and let $(x_1,x_2)\in O$ be such that $x_1\cup x_2$ is disjoint from $F$. Let $a\in {\mathbb N}\setminus F$.

\medskip 
{\bf Claim 2.}
There exist $(y_1,y_2)\in O$ such that $\psi(x_1,x_2,y_1,y_2)$ holds and $y_1\cap y_2$ is disjoint from $F\cup \{a\}$.

We distinguish two cases depending on whether $(x_1,x_2)$ is in $N$ or in $E$. Let us first assume that $(x_1,x_2)\in N$ and let $A\coloneqq x_1\cap x_2$. Let $b\in x_2\setminus (A\cup \{a\})$; such an element exists, because $|A\cup \{a\}|\leq k-1$. Let $B$ be $(k-2)$-element subset of $A\cup \{b\}\setminus \{a\}$; such a subset exists since $|A\cup \{b\}|=k-1$. Let $c,d,e$ be pairwise distinct elements outside $x_1\cup x_2\cup F\cup \{a,b\}$. Then the choice $y_1=A\cup \{b,c\},y_2=B\cup \{d,e\}$ suffices by Claim 1.

	Now let us assume that $(x_1,x_2)\in E$, and let $A$ be a $(k-2)$-element subset of $(x_1\cap x_2)\setminus \{a\}$, and let $b,c,d$ be pairwise distinct elements outside $x_1\cup x_2\cup F\cup \{a\}$. Then the choice $y_1=A\cup \{b,c\}$ and $y_2=A\cup \{b,d\}$ suffices. This concludes the proof of the claim.

\medskip 
    
	For $n\geq 1$, let $\psi_n(x_1,x_2,x_{2n+1},x_{2n+2})$
be the following primitive positive formula.
\[\exists x_3,\dots,x_{2n} \bigwedge_{i \in \{1,\dots,n\}}\psi(x_{2i-1},x_{2i},x_{2i+1},x_{2i+2}).\]
By induction we can see that $\psi_n(x_1,x_2,y_1,y_2)$ implies that either $E(x_1,x_2)\wedge E(y_1,y_2)$ or $N(x_1,x_2)\wedge N(y_1,y_2)$ for all $n$. We show that if $n=2k+4$ then the converse holds as well. Using Claim 2 we can show by a straightforward induction that for all $\ell$ there exist $u_1,u_2\in {{\mathbb N}\choose k}$ such that \begin{itemize}
	    \item $\psi_{\ell}(x_1,x_2,u_1,u_2)$ holds, 
        \item $(u_1\cup u_2)\cap ((y_1\cup y_2)\setminus (x_1\cup x_2))=\emptyset$, and 
        \item $|(u_1\cup u_2)\cap (x_1\cup x_2)|\leq k+2-\ell$ (note that $|x_1\cup x_2|\leq k+1$).
	\end{itemize}
        In particular, we obtain that there exist $z_1,z_2\in {{\mathbb N}\choose k}$ such that $\psi_{k+2}(x_1,x_2,z_1,z_2)$ holds and $z_1\cup z_2$ is disjoint from both $x_1\cup x_2$ and $y_1\cup y_2$. By our assumption we know that $|x_1\cap x_2|=|y_1\cap y_2|$ and we have seen that $\psi_{k+2}(x_1,x_2,z_1,z_2)$ also implies $|x_1\cap x_2|=|z_1\cap z_2|$. This means that the tuples $(x_1,x_2,z_1,z_2)$ and $(z_1,z_2,y_1,y_2)$ must lie in the same orbit of $\Aut(\bJ(k))$, in particular $\psi_{k+2}(z_1,z_2,y_1,y_2)$ also holds. By definition this implies that $\psi_{2k+4}(x_1,x_2,y_1,y_2)$ holds. We have obtained that $\psi_{2k+4}(x_1,x_2,y_1,y_2)$ holds if and only if $N(x_1,x_2)\wedge N(y_1,y_2)$ or $E(x_1,x_2)\wedge E(y_1,y_2)$, and thus 
    $\psi_{2k+4}$ is a primitive positive definition of
    the relation $I^{-1}(=_{\{0,1\}})$ in $\bC$.

	Let $\sigma(x_1,x_2,x_3,x_4)$ be the primitive positive formula

$$\exists y_1,y_2\bigwedge_{j\in \{1,2,3,4\}} S_{k-2}^3(y_1,y_2,x_j).$$

	Then $\sigma(x_1,x_2,x_3,x_4)$ holds if and only if there exists some $A\in {\mathbb{N} \choose k-2}$ such that $A \subseteq x_i$ for every $i \in \{1,\dots,4\}$. In this case let $x_i':= x_i\setminus A$ for $i\in \{1,2,3,4\}$. Then each of $x_1',x_2',x_3',x_4'$ is a 2-element subset of $\mathbb{N}\setminus A$. 
	For $R \in \{E,N\}^6$, let 
	$\phi_{R}$ be the formula (see Figure~\ref{fig:sigma-square}) 
	\begin{figure}
\centering
\begin{tikzpicture}[
    scale=0.9,
    every node/.style={circle,draw,inner sep=1.2pt}
]

\node (x1) at (-1.4, 1.4) {$x_1$};
\node (x2) at ( 1.4, 1.4) {$x_2$};
\node (x3) at ( 1.4,-1.4) {$x_3$};
\node (x4) at (-1.4,-1.4) {$x_4$};

\draw (x1) -- node[above=2mm,draw=none] {$R_1$} (x2);
\draw (x2) -- node[right=2mm,draw=none] {$R_2$} (x3);
\draw (x4) -- node[left=2mm,draw=none] {$R_4$} (x1);
\draw (x4) -- node[below=2mm,draw=none] {$R_6$} (x3);

\draw (x1) -- node[above left=.5mm,draw=none,yshift=3.5mm] {$R_3$} (x3);
\draw (x2) -- node[below left=.5mm,draw=none,yshift=-3.5mm] {$R_5$} (x4);


\end{tikzpicture}

\caption{
A graphical representation of the primitive positive formula $\phi_R$ (ignoring the conjunct $\sigma(x_1,x_2,x_3,x_4)$).
}
\label{fig:sigma-square}
\end{figure}
$$\sigma(x_1,x_2,x_3,x_4)\wedge R_1(x_1,x_2)\wedge R_2(x_2,x_3)\wedge R_3(x_3,x_1) \wedge R_4(x_1,x_4) \wedge R_5(x_2,x_4)\wedge R_6(x_3,x_4).$$
By checking all the possible cases for $x_1',x_2',x_3',x_4'$ one can easily check the following.

\begin{itemize}
\item There do not exist $x_1,x_2,x_3,x_4\in {\mathbb{N} \choose k}$ such that $\phi_{(N,N,N,E,E,E)}(x_1,x_2,x_3,x_4)$ holds. 
\item For any choice of $(R_1,R_2,R_3)\in \{E,N\}^3\setminus \{(N,N,N)\}$ there exist $x_1,x_2,x_3,x_4\in {\mathbb{N} \choose k}$ such that 
$\phi_{(R_1,R_2,R_3,E,E,E)}(x_1,x_2,x_3,x_4)$ holds.
\item For any choice of $(R_4,R_5,R_6)\in \{E,N\}^3\setminus \{(E,E,E)\}$ there exist $x_1,x_2,x_3,x_4\in {\mathbb{N} \choose k}$ such that 
$\phi_{(N,N,N,R_4,R_5,R_6})(x_1,x_2,x_3,x_4)$
holds.
\end{itemize} 
	From these observation and from the fact that $\psi_{2k+4}$ defines $I^{-1}(=_{\{0,1\}})$, it follows that the relation $I^{-1}(\{0,1\}^3\setminus \{0,0,0\})$ is definable by the primitive positive formula
$$
\exists y_1,y_2,y_3,z \bigwedge_{i\in \{1,2,3\}} \psi_{2k+4}(x_i^1,x_i^2,y_i,y_{i+1}) \wedge \sigma(y_1,y_2,y_3,z)\wedge \bigwedge_{i\in \{1,2,3\}} E(y_i,z),
$$
where the addition in the indices are understood modulo 3. 
Similarly, $I^{-1}(\{0,1\}^3\setminus \{1,1,1\})$ is definable by 
$$
\exists y_1,y_2,y_3,z \bigwedge_{i\in \{1,2,3\}} \psi_{2k+4}(x_i^1,x_i^2,y_i,z) \wedge \sigma(y_1,y_2,y_3,z)\wedge \bigwedge_{i\in \{1,2,3\}} N(y_i,y_{i+1}). 
$$
Since $I^{-1}(\nae)$ is the intersection of these relations, it follows that it has a primitive positive definition in $\bC$ as well.
\end{proof}

\section{The Model-Complete Cores of First-order Reducts of $\bJ(k)$}
\label{sect:mc-cores} 
In this section we will show that the model-complete core of a first-order reduct of $\bJ(k)$ is a first-order reduct of $\bJ(\ell)$, for some $\ell \leq k$ (Theorem~\ref{thm:main}). 
The key to our analysis are functions from $V \to V$ which are canonical with respect to a certain Ramsey expansion $\bJ^<(k)$ of $\bJ(k)$, which we will introduce in Section~\ref{sect:RamseyExp}. We then study these functions by their actions on the first-order definable equivalence relations; to this end, we first prove several fundamental properties of these equivalence relations (Section~\ref{sect:defequiv} and then describe how functions that are canonical with respect to $\bJ^<(k)$ act on these relations (Section~\ref{sect:action}).

\subsection{A Finitely Bounded Homogeneous Ramsey Expansion}
\label{sect:RamseyExp}

We define
 $$\qup{k} := \{(a_1,\dots,a_k)\in \mathbb{Q}^k: a_1< \dots < a_k\},$$

\begin{defi}\label{def:ramsey}
Let $\bJ^<(k)$ be the 
structure whose domain is $\qup{k}$ and whose relations are
$<_{ij}$ and $=_{ij}$, for $i,j \in \{1,\dots,k\}$, defined by
\begin{align*}
(a_1,\dots,a_k) <_{ij} (b_1,\dots,b_k) & \text{ if and only if } 
a_i < b_j \\
(a_1,\dots,a_k) =_{ij} (b_1,\dots,b_k) & \text{ if and only if } 
a_i = b_j. 
\end{align*} 
\end{defi}

Note that the relations $=_{ij}$ are quantifier-free definable from $<_{ij}$. The mapping 
$$(a_1,\dots,a_k) \mapsto \{a_1,\dots,a_k\}$$
defines a bijection from $\qup{k}$ to ${{\mathbb Q} \choose k}$. 
Via this bijection we can identify the elements of $\qup{k}$ with the $k$-element subsets of ${\mathbb Q}$. Also via a bijection between $\mathbb{Q}$ and $\mathbb{N}$ we can identify the $k$-element subset of $\mathbb{Q}$ with the $k$-elemenet subsets of ${\mathbb N}$, and hence we may assume that 
$\bJ^<(k)$ has the same domain as $\bJ(k)$. Then the relation  $S_i$, for $i \in \{0,\dots,k\}$, can be written as a Boolean combination of the relations $=_{pq}$ for $p,q \in \{1,\dots,k\}$. 
Indeed, for $a,b \in \qup{k}$ we have $(a,b) \in S_{\geq i}$ if and only if 
$$ \bigvee_{p_1 < \cdots < p_i, q_1,\dots,q_i \in [k]} \bigwedge_{s,t \in [i]} a =_{p_s q_t} b $$
and we can express $S_i$ as $S_i = S_{\geq i} \setminus S_{\geq i+1}$. In particular, using the identification above, 
we may view $\bJ(k)$ as a first-order reduct of $\bJ(k)^<$.

It is well known and easy to see that a relational structure is finitely bounded if and only if its age has a finite universal axiomatisation.

\begin{thm}\label{thm:ramsey}
$\bJ^<(k)$ is homogeneous, Ramsey, and finitely bounded. 
\end{thm}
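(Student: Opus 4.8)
The plan is to treat the three assertions separately, exploiting throughout that each element of $\bJ^<(k)$ is literally a strictly increasing $k$-tuple of rationals, so that a finite substructure $A$ is governed by its \emph{support} $P_A\subseteq\mathbb Q$, the finite set of rationals occurring as an entry of some element of $A$. For homogeneity, given an isomorphism $f\colon A\to B$ between finite substructures, I would first observe that $f$ induces a map $\bar f\colon P_A\to P_B$: if a rational $q$ occurs in position $i$ of $a$ and in position $j$ of $a'$ (so $a=_{ij}a'$), then $f(a)$ and $f(a')$ agree in positions $i$ and $j$, and $\bar f(q)$ is that common value; preservation of the relations $=_{ij}$ makes this well defined, and preservation of the $<_{ij}$ makes $\bar f$ an order-isomorphism of $(P_A;<)$ onto $(P_B;<)$. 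Extending $\bar f$ to some $g\in\Aut(\mathbb Q;<)$ by homogeneity of $(\mathbb Q;<)$ and letting $g$ act coordinatewise on $\qup k$ yields an automorphism of $\bJ^<(k)$ (it preserves strict increasingness and, being order-preserving, preserves every $<_{ij}$ and $=_{ij}$) which restricts to $f$.

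\textbf{Finite boundedness.} By the stated equivalence it suffices to exhibit a finite universal axiomatisation of $\Age(\bJ^<(k))$. For a structure $\bC$ in the signature of $\bJ^<(k)$, consider its ``slots'' $C\times[k]$ with $(c,i)\approx(d,j)$ iff $c=_{ij}d$ and $(c,i)\prec(d,j)$ iff $c<_{ij}d$. The axioms state that $\approx$ is an equivalence relation, that $\prec$ induces a strict linear order on the $\approx$-classes (reflexivity, trichotomy and transitivity, each with at most three quantified elements, and only finitely many instances since $i,j,l$ range over the fixed set $[k]$), that the $k$ slots of a single element are $\prec$-increasing and pairwise $\approx$-distinct (equivalently $c<_{ij}c$ holds exactly when $i<j$ and $c=_{ij}c$ exactly when $i=j$), and finally $\bigwedge_{i\in[k]} x=_{ii}y\to x=y$. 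Necessity is immediate. For sufficiency, quotient the slots by $\approx$, embed the resulting finite linear order order-preservingly into $\mathbb Q$, and send $c\in C$ to the tuple of rationals attached to its $k$ slots: the ``increasing'' axioms place this tuple in $\qup k$, the last axiom makes the assignment injective, and trichotomy makes it reflect $<_{ij}$ and $=_{ij}$ together with their complements; hence $\bC$ embeds into $\bJ^<(k)$.

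\textbf{Ramsey property.} This is the step I expect to require the most care, but the plan is to reduce it entirely to the finite Ramsey theorem. For $n\geq k$ let $C_n$ be the substructure of $\bJ^<(k)$ whose domain consists of \emph{all} $k$-subsets of a chosen $n$-element subset of $\mathbb Q$; these are cofinal in $\Age(\bJ^<(k))$ since any finite substructure embeds into $C_n$ once $n$ exceeds the size of its support. The key point is that, because finite linear orders are rigid, a copy of a finite substructure $A$ with $|P_A|=\ell$ inside $C_n$ is determined by, and exists for, each $\ell$-element subset $L$ of the support of $C_n$ — namely the push-forward of $A$ along the unique order-isomorphism $P_A\to L$, all of whose $k$-subsets automatically lie in $C_n$ — giving a bijection between copies of $A$ in $C_n$ and $\ell$-subsets of an $n$-set. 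Hence an $r$-colouring of the copies of $A$ in $C_n$ is nothing but an $r$-colouring of $\binom{[n]}{\ell}$, and classical finite Ramsey provides, for $n$ large in terms of $\ell,m,r$, an $m$-subset all of whose $\ell$-subsets receive one colour, i.e.\ a copy of $C_m$ in $C_n$ in which every copy of $A$ is monochromatic, so $C_n\to(C_m)^A_r$. Finally, for arbitrary finite $A\leq B$ in $\Age(\bJ^<(k))$, take $m:=|P_B|$ (so $B\leq C_m$ and $\ell:=|P_A|\leq m$) and restrict a monochromatic copy of $C_m$ to a copy of $B$ contained in it, obtaining $C_n\to(B)^A_r$ with $C_n\in\Age(\bJ^<(k))$. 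Thus $\Age(\bJ^<(k))$ is a Ramsey class; since moreover its members are rigid (an automorphism of a finite substructure is trivial on the support, hence on the structure), $\bJ^<(k)$ has the Ramsey property. The only real work lies in setting up this dictionary between copies of substructures and subsets of the support; once that is in place no combinatorics beyond the classical Ramsey theorem is needed.
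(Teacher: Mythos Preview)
Your proof is correct. The homogeneity and finite-boundedness arguments are essentially identical to the paper's: the paper spells out the universal axioms explicitly as a short list of sentences, but they amount to exactly your description (equivalence relation on slots, linear order on the quotient, slots of a single element strictly increasing, and extensionality).

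The genuine difference is in the Ramsey argument. The paper does \emph{not} carry out any combinatorics on the age; instead it observes that the coordinatewise action of $\Aut(\mathbb Q;<)$ on $\qup{k}$ yields an isomorphism of topological groups $\Aut(\mathbb Q;<)\cong\Aut(\bJ^<(k))$ (surjectivity being exactly the homogeneity computation), and then quotes the Kechris--Pestov--Todorcevic correspondence: for homogeneous structures the Ramsey property depends only on the automorphism group as a topological group, and $(\mathbb Q;<)$ is Ramsey. Your route is more elementary and self-contained: the bijection you set up between copies of $A$ in $C_n$ and $|P_A|$-element subsets of the support of $C_n$ reduces the structural Ramsey statement directly to the classical finite Ramsey theorem, with no appeal to topological dynamics. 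The paper's approach is shorter once one is willing to cite KPT as a black box and yields the topological-group isomorphism as a byproduct; yours makes the combinatorics fully transparent and keeps the proof inside finite combinatorics.
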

\begin{proof} 
For homogeneous structures, being Ramsey is a property of the
automorphism group as a topological group; see~\cite{Topo-Dynamics}. Therefore, it suffices to show the homogeneity of $\bJ^<(k)$ and that 
the coordinatewise action of $\Aut({\mathbb Q};<)$ on $\qup{k}$ defines an isomorphism between $\Aut({\mathbb Q};<)$ and $\Aut(\bJ^<(k))$ as topological groups. 
We first show that for every isomorphism $f$ between finite substructures of $\bJ^<(k)$ 
there exists an automorphism $\alpha$ of $({\mathbb Q};<)$ 
such that 
$f$ is the restriction of the map $(a_1,\dots,a_k) \mapsto (\alpha(a_1),\dots,\alpha(a_k))$ to $\Dom(f)$. 
Let $A := \bigcup \Dom(f)$. 
Then $|A| < \omega$. For an element $r \in \mathbb{Q}$ we define
$\beta(r)$ as follows. If $r = a_i$ for some $a \in \Dom(f)$, 
then we define $\beta(r) := f(a)_i$. This is well-defined, because if $a_i = b_j$, then $a =_{ij} b$, and hence $f(a)_i = f(b)_j$ since $f$ preserves $=_{ij}$. Similarly, if $r < s$ for $r,s \in A$, then $r = a_i < b_j = s$ for some $a,b \in \Dom(f)$. Then we have $a <_{ij} b$ and since $f$ preserves $<_{ij}$ we have 
$f(a)_i < f(b)_j$, which by definition implies that $\beta(r) < \beta(s)$. We obtained that $\beta$ is an isomorphism between finite substructures of $({\mathbb Q};<)$. Thus, by the homogeneity 
of $({\mathbb Q};<)$ there exists an extension $\alpha \in \Aut({\mathbb Q};<)$ of $\beta$, which has the required properties. 
Clearly, this show the homogeneity of $\bJ^<(k)$.
The map that sends $\alpha \in \Aut({\mathbb Q};<)$ to
$(a_1,\dots,a_k) \mapsto (\alpha(a_1),\dots,\alpha(a_k))$ is clearly a continuous group homomorphism; what we have just shown is that every automorphism $\beta$ of $\bJ^<(k)$ is of this form, and hence the homomorphism is surjective. Finally, the map is open, because
if $(\beta)_{i \in {\mathbb N}}$ converges in $\Aut(\bJ^<(k))$, 
then each $\beta_i$ is of the form $(a_1,\dots,a_k) \mapsto (\alpha_i(a_1),\dots,\alpha_i(a_k))$ for some $\alpha_i \in \Aut({\mathbb Q};<)$,
and $(\alpha_i)_{i \in {\mathbb N}}$ converges in $\Aut({\mathbb Q};<)$. 

For finite boundedness, observe that 
$\bJ^<(k)$ satisfies for all $r,s,t \in [k]$ the following sentences (all variables are universally quantified):
\begin{align}
& x <_{rs} x && \text{ if } r<s \label{eq:qup} \\
& (x =_{11} y \wedge \cdots \wedge x =_{kk} y) \Leftrightarrow x = y \label{eq:refl} \\
& x <_{rs} y \vee y <_{sr} x \vee x =_{rs} y \label{eq:total} \\
& \neg (x =_{rs} y) \vee \neg (x <_{rs} y) \label{eq:antisym1} \\
& \neg (x <_{rs} y) \vee \neg (y <_{sr} x) 
\label{eq:antisym2} \\
& (x <_{rs} y \wedge y <_{st} z) \Rightarrow x <_{rt} z \label{eq:trans} 
\end{align}
It suffices to show that if a finite structure $\bA$ 
satisfies these sentences, then it has an embedding $\alpha$ into $\bJ^<(k)$. 
We define a binary relation $<$ on $A \times [k]$ as follows: put $(a,i) < (b,j)$ if $a <_{ij} b$. 

{\bf Claim 1.} $<$ is a linear order.

Transitivity follows from~\eqref{eq:trans},
totality from~\eqref{eq:total}, and 
antisymmetry from~\eqref{eq:antisym1} and~\eqref{eq:antisym2}.

Pick any embedding $e \colon (A \times [k],<) \to {\mathbb Q};<)$, and define
$\alpha \colon A \to {\mathbb Q}^k$ 
by $$\alpha(a) := (e(a,1),\dots,e(a,k)).$$ 

{\bf Claim 2.} $\alpha$ is an embedding of $\bA$ into $\bJ^<(k)$.

Note that~\eqref{eq:qup} implies that 
the image of $\alpha$ lies in $\qup{k}$. 
Injectivity follows from~\eqref{eq:refl}:
indeed, if $\alpha(a) = \alpha(b)$, then 
$e(a,r) = e(b,r)$ for all $r \in [k]$. 
So neither $a <_{rr} b$ nor $b <_{rr} a$
since $e$ is an embedding. 
Hence, $a =_{rr} b$ by~\eqref{eq:total},
and~\eqref{eq:refl} implies that $a=b$. 
To see that $\alpha$ preserves $<_{ij}$, for $i,j \in [k]$, note that $a <_{ij} b$ in $\bA$ 
implies that $(a,i)<(b,j)$, and hence 
$e(a,i) < e(b,j)$ holds in $({\mathbb Q};<)$, 
which in turn implies that 
$\alpha(a)_i < \alpha(b)_j$, 
so $\alpha(a) <_{ij} \alpha(b)$ in $\bJ^<(k)$. 
\end{proof} 

\begin{rem}\label{rem:inNP}
It follows that the CSP for all first-order reducts $\bB$ of $\bJ(k)$ is in NP:
it suffices to show that $\bB$ is the reduct of a finitely bounded structure $\bC$ (see, e.g.,~\cite[Proposition 2.3.15]{Book}).
Using Theorem~\ref{thm:ramsey}, it is easy to see that the expansion of $\bB$ by the relations of $\bJ^<(k)$ is such an expansion. 
\end{rem}


\subsection{Definable equivalence relations} 
\label{sect:defequiv}
For a set $X$  and a binary relation $R \subseteq X^2$ we write
$\Eq(R)$ for the smallest equivalence relation that contains $R$. 
We write $R^{\smile}$ for the relation $\{(y,x) \mid (x,y) \in R\}$. 
Note that 
\begin{align}
    \Eq(R) = \bigcup_{i \in \mathbb N} (\Delta_X \cup R \cup R^\smile)^i.
    \label{eq:Eq}
\end{align} 
For a structure $\bA$ we write ${\mathcal E}(\bA)$ for the set of all equivalence relations that are first-order definable in $\bA$. 
The set ${\mathcal E}(\bA)$ contains the inclusions-wise maximal element $X^2$
and the inclusion-wise minimal element $\Delta_X$. 
For any $E, F \in {\mathcal E}(\bA)$, the relation $E \wedge F := E \cap F$ is in ${\mathcal E}(\bA)$. 
It is easy to see that if $\bA$ is $\omega$-categorical, then 
$E \vee F := \Eq(E \cup F)$ is first-order definable in $\bA$ (since there are only finitely many inequivalent binary relations with a first-order definition in $\bA$, this follows from~\eqref{eq:Eq}), and hence an element of ${\mathcal E}(\bA)$ as well.

Next, we describe ${\mathcal E}(\bJ^<(k))$, the set of equivalence relations on $\qup{k}$ that are first-order definable in $\bJ^<(k)$. 
Clearly, for every $I \subseteq [k]$ 
the relation $$E_I := \{(a,b) \in (\qup{k})^2 \mid a_i = b_i \text{ for all } i \in I\}$$
is a first-order definable equivalence relation on $\bJ^<(k)$. 

\begin{rem}
Note that $E_{\emptyset} = (\qup{k})^2$. 
Also note that if $I \subseteq J \subseteq [k]$, then $E_J \subseteq E_I$. 
\end{rem} 

Let $a,b \in \qup{k}$. We write $(a,b)_=$
for the set $\{i \in [k] \mid a_i = b_i\}$. 
Let $O$ be an orbit of pairs in $\bJ^<(k)$. 
Define $$O_= := \{i \in [k] \mid a_i = b_i \text{ for some (equivalently, for every) } (a,b) \in O \}.$$

\begin{lem}\label{lem:eq}
Let $O$ be an orbit of pairs of $\Aut(\bJ^<(k))$. Then 
$\Eq(O) = E_{O_=}$. 
\end{lem}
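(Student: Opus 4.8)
The plan is to prove $\Eq(O) = E_{O_=}$ by establishing both inclusions. For the inclusion $\Eq(O) \subseteq E_{O_=}$, it suffices to observe that $O \subseteq E_{O_=}$ (this is immediate from the definition of $O_=$: every $(a,b) \in O$ satisfies $a_i = b_i$ for all $i \in O_=$) and that $E_{O_=}$ is an equivalence relation, hence contains the smallest equivalence relation generated by $O$. The work is all in the reverse inclusion $E_{O_=} \subseteq \Eq(O)$: given $(a,b) \in \qup{k}$ with $a_i = b_i$ for exactly the indices in some superset of $O_=$, I need to connect $a$ to $b$ by a chain of $O$-steps (allowing also reversed steps and loops, since $\Eq(O)$ is built from $\Delta \cup O \cup O^\smile$).

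The key idea is to exploit the homogeneity of $\bJ^<(k)$ (Theorem~\ref{thm:ramsey}) together with the rich supply of "room" in $\mathbb{Q}$: since $\mathbb{Q}$ is dense and unbounded, one can always find intermediate tuples realizing any prescribed order type. First I would fix a representative $(c,d) \in O$, so $(c,d)_= = O_=$; write $m := |O_=|$. Then, given $(a,b) \in E_{O_=}$, I would construct a sequence $a = w_0, w_1, \dots, w_n = b$ in $\qup{k}$ such that consecutive pairs $(w_i, w_{i+1})$ each lie in $O$ or $O^\smile$. The natural strategy is a two-phase argument: first move from $a$ to some auxiliary tuple $w$ that agrees with $a$ on $O_=$ but is "generic" relative to both $a$ and $b$ off of $O_=$ (so that both $(a,w)$ and, later, $(w, \text{something})$ can be placed in $O \cup O^\smile$ by homogeneity), and symmetrically move from $b$ backwards to a tuple $w'$; then connect $w$ to $w'$. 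Because $a$ and $b$ already agree on all of $O_=$, every tuple in the chain can be taken to agree with them there, so the chain stays inside $E_{O_=}$, and each individual step is an isomorphism of two-element substructures of $\bJ^<(k)$ of the orbit-type $O$ (or its converse), which by homogeneity is realized.

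The main obstacle I anticipate is checking that a single step of orbit-type $O$ actually suffices to realize the required "generic displacement" — i.e., that for a tuple $w$ agreeing with $a$ precisely on $O_=$ and otherwise in sufficiently general position, the pair $(a,w)$ genuinely lies in the orbit $O$ and not merely in some other orbit $O'$ with $O'_= = O_=$. There may be several orbits of pairs sharing the same equality-set $O_=$ (distinguished by the relative order of the non-equal coordinates), so the chain must be built carefully, possibly using a longer sequence of small steps, each of type $O$, whose composition achieves an arbitrary reordering. Concretely, I expect to need a lemma to the effect that the $O$-steps, composed, act transitively enough on the off-$O_=$ coordinates; this should again follow from homogeneity and density of $\mathbb{Q}$, by interpolating enough intermediate order types, but it is the step requiring genuine combinatorial care rather than a one-line appeal.
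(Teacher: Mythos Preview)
Your plan handles the easy inclusion correctly and rightly names the obstacle in the hard one: many orbits share the same equality set $O_=$, so a single ``generic'' step need not land in $O$. But the two-phase strategy you sketch does not actually reduce the problem. After an $O$-step from $a$ to $w$ and an $O^\smile$-step from $b$ to $w'$, the pair $(w,w')$ is again an arbitrary element of $E_{O_=}$ with no additional structure; you are back where you started. The ``lemma to the effect that $O$-steps, composed, act transitively enough on the off-$O_=$ coordinates'' that you flag as needed is not a sublemma feeding into the argument---it \emph{is} the content of the hard inclusion, and your plan offers no mechanism to prove it beyond ``homogeneity and density should do it.''

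The paper supplies exactly that mechanism, and it is short. One first shows $E_{[k]\setminus\{j\}}\subseteq \Eq(O)$ for each $j\notin O_=$, i.e., that a single-coordinate change at any unfrozen index already lies in $\Eq(O)$. The trick is a perturbation through a fixed partner: pick $(u,v)\in O$ and nudge $u_j$ to some $u'_j$ with $u_j<u'_j<\min(u_{j+1},v_{j+1})$, so small that the order type of $(u',v)$ is unchanged and hence $(u',v)\in O$ as well. Since both $(u,v)$ and $(u',v)$ lie in $O$, we get $(u,u')\in\Eq(O)$; and $(u,u')$ differs only at coordinate $j$, so every nontrivial pair in $E_{[k]\setminus\{j\}}$ lies in the orbit of $(u,u')$ or of $(u',u)$, hence in $\Eq(O)$. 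Once single-coordinate moves are available, any $(a,b)\in E_{O_=}$ is handled by choosing a common $c$ agreeing with both on $O_=$ (with the remaining entries placed so the intermediate tuples stay increasing) and walking from $a$ to $c$ and from $b$ to $c$ one coordinate at a time. The perturbation idea---routing through the fixed second coordinate $v$ so that the nudge at $u_j$ does not change the orbit---is the concrete ingredient your plan is missing.
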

\begin{proof}
For the containment $\Eq(O) \subseteq E_{O_=}$, it suffices to observe that $E_{O_=}$ is an equivalence relation which contains $O$. 

To show the converse inclusion, 
we first show that 
$E_{[k] \setminus \{j\}} \subseteq \Eq(O)$ for every $j \in [k] \setminus {O_=}$.
 Let $(a,b) \in E_{[k] \setminus \{j\}}$. If $a = b$, then $(a,b) \in \Eq(O)$, so suppose that $a \neq b$. 
Pick $(u,v) \in O$ and define 
$u' = (u_1,\dots,u_{j-1},u_j',u_{j+1},\dots,u_k)$ 
where $u_j < u_j' < u_{j+1}$, and $u_j'$ is chosen so that the interval $(u_j,u_j')$ does not contain any of the entries of $v$. By the homogeneity of $({\mathbb Q};<)$ it follows that there exists an automorphism $\alpha$ of $(\mathbb{Q};<)$ which maps $u_j$ to $u_j'$ and fixes all other elements in $u$ and $v$. Then $\alpha(u)=u'$ and $\alpha(v)=v$, which implies $(u',v) \in O$, and thus $(u,u') \in \Eq(O)$.
Also note that $(a,b)$ is in the same orbit as $(u,u')$ or in the same orbit as $(u',u)$;
in both cases, we obtain that $(a,b) \in \Eq(O)$. 

To finally show $E_{O_=} \subseteq \Eq(O)$, let $(a,b) \in E_{O_=}$. Let ${O_=} =: \{i_1,\dots,i_n\}$ be such that $i_1 < \cdots < i_n$. 
Pick $c \in \qup{k}$ such that 
\begin{itemize}
\item $c_i = a_i = b_i$ for all $i \in {O_=}$, 
\item $\max(a_{i_1-1},b_{i_1-1}) < c_1 < \cdots < c_{i_1-1} < c_{i_1}$, 
\item $\max(a_{i_\ell-1},b_{i_\ell-1}) < c_{i_{\ell-1}} < \cdots < c_{i_{\ell}+1} < c_{i_\ell}$ for every $\ell \in \{2,\dots,n\}$, 
\item $\max(a_{i_n},b_{i_n}) < c_{i_n+1} < \cdots < c_k$. 
\end{itemize}
 Note that $(a,c)$ and $(b,c)$ lie in the same orbit; we claim that both lie in $\Eq(O)$. 
For $i \in \{0,1,\dots,k\}$ let $a^i := (a_1,\dots,a_i,c_{i+1},\dots,c_k) \in \qup{k}$. Note that $a^0 = c$ and that $a^k = a$. 
Also note that for $j \in [k] \setminus {O_=}$ we have $(a^{j-1},a^j) \in E_{[k] \setminus \{j\}} \subseteq \Eq(O)$. For $j \in {O_=}$ we have
$a^{j-1} = a^j$ and again $(a^{j-1},a^j) \in \Eq(O)$. 
 By the transitivity of
$\Eq(O)$ we obtain that $(a,c) \in \Eq(O)$,
and therefore also $(b,c) \in \Eq(O)$ since they lie in the same orbit.  
Thus, $(a,b) \in \Eq(O)$. 
\end{proof}

\begin{lem}\label{lem:join}
For $I,J \subseteq [k]$ we have $E_I \vee E_J = E_{I \cap J}$. 
\end{lem} 
\begin{proof}
Clearly, $E_{I \cap J}$ contains $E_I$ 
and it contains $E_J$, so it also contains
$E_I \vee E_J$. For the other inclusion let us consider the $k$-tuples $u=(1,2,\dots,k),v,w$ such that
\begin{itemize}
\item $v_i=w_i=i$ if $i\in I\cap J$,
\item $v_i=w_i=i+1/3$ if $i\in J\setminus I$,
\item $v_i=i, w_i=i+1/3$ if $i\in I\setminus J$,
\item $v_i=i+1/3, w_i=i+2/3$ if $i\in [k]\setminus (I\cup J)$.
\end{itemize}
	Then $u,v,w\in \qup{k}, (u,v)\in E_I, (v,w)\in E_J$, and thus $(u,w)\in E_I\vee E_J$. Let $O$ be the orbit of $(u,w)$. Then by definition ${O_=}=\{i\in [k]: u_i=w_i\}=I\cap J$, and thus by Lemma~\ref{lem:eq} we have $\Eq(O)=E_{O_=}=E_{I\cap J}$. Since $(u,w)\in O\cap (E_I\vee E_J)$, it follows that in fact $O\subseteq E_I\vee E_J$, and since $E_I\vee E_J$ is an equivalence relation we obtain $E_I\vee E_J\supseteq \Eq(O)=E_{I\cap J}$.
\end{proof} 

\begin{cor}
${\mathcal E}(\bJ^<(k)) = \{ E_I \mid I \subseteq [k] \}$.
\end{cor}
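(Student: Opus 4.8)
The reverse inclusion $\{E_I \mid I \subseteq [k]\} \subseteq {\mathcal E}(\bJ^<(k))$ has already been observed, so the plan is to prove that every $E \in {\mathcal E}(\bJ^<(k))$ equals $E_I$ for a suitable $I \subseteq [k]$. First I would use that $\bJ^<(k)$ is homogeneous with a finite relational signature, hence $\omega$-categorical, so every first-order definable binary relation is a union of finitely many orbits of pairs of $\Aut(\bJ^<(k))$; write $E = O_1 \cup \dots \cup O_n$ accordingly.

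Next I would record the elementary fact that $\Eq(R) \vee \Eq(S) = \Eq(R \cup S)$ for arbitrary binary relations $R, S \subseteq (\qup{k})^2$: the inclusion $\Eq(R \cup S) \subseteq \Eq(\Eq(R) \cup \Eq(S))$ follows from $R \cup S \subseteq \Eq(R) \cup \Eq(S)$ and monotonicity of $\Eq$, while the reverse inclusion follows from $\Eq(R), \Eq(S) \subseteq \Eq(R \cup S)$ together with idempotency of $\Eq$. Iterating this identity yields $\Eq(O_1 \cup \dots \cup O_n) = \Eq(O_1) \vee \dots \vee \Eq(O_n)$. Since $E$ is itself an equivalence relation, $E = \Eq(E) = \Eq(O_1) \vee \dots \vee \Eq(O_n)$. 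By Lemma~\ref{lem:eq} we have $\Eq(O_j) = E_{(O_j)_=}$ for each $j$, and iterating Lemma~\ref{lem:join} gives $E_{(O_1)_=} \vee \dots \vee E_{(O_n)_=} = E_I$ with $I := (O_1)_= \cap \dots \cap (O_n)_=$. Hence $E = E_I$, which completes the proof.

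I do not expect a genuine obstacle here: all the substance has been front-loaded into Lemmas~\ref{lem:join} and~\ref{lem:eq}, and the only step requiring a little care is the passage from ``first-order definable equivalence relation'' to ``join of the equivalence closures of its orbits'', which rests on $\omega$-categoricity together with the two-line manipulation of $\Eq$ and $\vee$ above. As a sanity check one notes that $E_{[k]}$ is the diagonal and $E_{\emptyset} = (\qup{k})^2$ are the extreme cases, and that $I \mapsto E_I$ is order-reversing, so that in fact ${\mathcal E}(\bJ^<(k))$ is a lattice anti-isomorphic to the power set of $[k]$ (though only the set equality is needed for the statement).
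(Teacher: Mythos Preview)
Your proof is correct and follows essentially the same approach as the paper: decompose $E$ into its finitely many orbits of pairs, write $E$ as the join of the $\Eq(O_j)$, apply Lemma~\ref{lem:eq} to identify each $\Eq(O_j)$ with $E_{(O_j)_=}$, and then iterate Lemma~\ref{lem:join} to conclude $E = E_{\bigcap_j (O_j)_=}$. The paper's proof is terser, taking the identity $E = \bigvee_j \Eq(O_j)$ for granted, whereas you spell out the justification via $\Eq(R\cup S)=\Eq(R)\vee\Eq(S)$ and $\omega$-categoricity; but there is no genuine difference in method.
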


\begin{proof}
Let $E \in {\mathcal E}(\bJ^<(k))$.
Let 
$O_1,\dots,O_{\ell}$ be an enumeration of the orbits of pairs contained in $E$. 
Then 
\begin{align*}
E = \bigvee_{i = 1}^{\ell} \Eq(O_i) & = 
\bigvee_{i = 1}^{\ell} E_{(O_i)_=} && \text{(by Lemma~\ref{lem:eq})} \\
& = E_{\bigcap_{i = 1}^{\ell}(O_i)_=} && \text{(by Lemma~\ref{lem:join})}. 
\qedhere 
\end{align*}
\end{proof} 

\subsection{Actions on definable equivalence relations} 
\label{sect:action}
The key in our analysis of the behaviour of canonical functions of $\bJ^<(k)$ is to understand their actions on ${\mathcal E}(\bJ^<(k))$, the 
definable equivalence relations in $\bJ^<(k)$.

\begin{lem}\label{lem:eqf}
Let $\bA$ be an $\omega$-categorical structure and 
let $R \subseteq A^2$. 
Then for any $f \in \Sym(A)$ we have $f(\Eq(R)) \subseteq  \Eq(f(R))$
and $\Eq(f(\Eq(R))) = \Eq(f(R))$. 
\end{lem} 
\begin{proof}
Clearly, $f(\Delta_X) \subseteq \Delta_X$, and $f(R^{\smile}) = f(R)^{\smile}$ holds for every for every binary relation $R$.
Moreover, for all binary relations $R$ and $S$ we have 
$f(R \circ S) \subseteq f(R) \circ f(S)$. Since $\Eq(R)$
is the transitive closure of $\Delta_A \cup R \cup R^{\smile}$, 
we have $f(\Eq(R)) \subseteq \Eq(f(R))$.
This also implies that $\Eq(f(\Eq(R))) \subseteq \Eq(f(R))$. The converse inclusion is trivial. 
\end{proof} 

\begin{lem}\label{lem:pres-vee}
For $I,J \subseteq [k]$ and $f \in \Sym(\qup{k})$ we have $$\Eq(f(E_{I \cap J})) = \Eq(f(E_I)) \vee \Eq(f(E_J)).$$
\end{lem} 
\begin{proof}
\begin{align*}
 \Eq(f(E_{I \cap J})) & = \Eq(f(E_I \vee E_J))
&& \text{(Lemma~\ref{lem:join})} \\
& = \Eq(f(E_I \cup E_J)) && \text{(Lemma~\ref{lem:eqf})} \\
& = \Eq(f(E_I) \cup f(E_J)) \\
& = \Eq(\Eq(f(E_I)) \cup \Eq(f(E_J))) = \Eq(f(E_I)) \vee \Eq(f(E_J)) &&  \qedhere
\end{align*}
\end{proof}

\begin{cor}\label{cor:B}
For every $f \in \Sym(\qup{k})$ and $I \subseteq [k]$ there exists a (unique) $J \subseteq [k]$ such that
$$\Eq(f(E_I)) = E_J.$$
\end{cor}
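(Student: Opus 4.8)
The plan is to deduce this from the description $\mathcal{E}(\bJ^<(k)) = \{E_I \mid I \subseteq [k]\}$ established in the preceding corollary. Since $\Eq(f(E_I))$ is by construction an equivalence relation, it suffices to show that it is first-order definable in $\bJ^<(k)$; then it lies in $\mathcal{E}(\bJ^<(k))$ and is therefore of the form $E_J$ for some $J \subseteq [k]$.

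To prove definability I would first reduce to the case of a single orbit of pairs. Write $E_I$ as the union of the finitely many orbits $O_1,\dots,O_m$ of pairs of $\Aut(\bJ^<(k))$ that it contains; as in the proof of the preceding corollary, $\bigvee_{s=1}^{m}\Eq(O_s)=E_{\bigcap_s (O_s)_=}=E_I$, so $\Eq\big(\bigcup_s O_s\big)=E_I$. Applying Lemma~\ref{lem:eqf} with $R=\bigcup_s O_s$ and using that taking images commutes with finite unions gives
\[
\Eq(f(E_I)) \;=\; \Eq\big(f(\textstyle\bigcup_s O_s)\big) \;=\; \Eq\big(\textstyle\bigcup_s f(O_s)\big) \;=\; \bigvee_{s=1}^{m}\Eq(f(O_s)).
\]
Since $\{E_I \mid I\subseteq[k]\}$ is closed under joins (Lemma~\ref{lem:join}), it is enough to show that $\Eq(f(O))=E_J$ for a single orbit of pairs $O$ and some $J\subseteq[k]$.

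For a single orbit $O$, canonicity of $f$ implies that $f(O)$ is contained in one orbit of pairs $O'$, and the candidate is $J := O'_=$. The inclusion $\Eq(f(O))\subseteq\Eq(O')=E_{O'_=}$ is immediate from Lemma~\ref{lem:eq}. The reverse inclusion $E_{O'_=}\subseteq\Eq(f(O))$ is the heart of the argument, and I would prove it by adapting the proof of Lemma~\ref{lem:eq}: for each $j\in[k]\setminus O'_=$ show $E_{[k]\setminus\{j\}}\subseteq\Eq(f(O))$ by producing, for any two tuples agreeing off coordinate $j$, a short path through $f(O)$ connecting them — here one uses canonicity together with the freedom to slide coordinates by automorphisms of $\bJ^<(k)$ (and, where a bound on the size of such a configuration is needed, the Sunflower Lemma~\ref{lem:intersect}); one then lifts from the relations $E_{[k]\setminus\{j\}}$ to $E_{O'_=}$ by the same transitivity-of-$\Eq$ path construction used in Lemma~\ref{lem:eq}.

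The step I expect to be the main obstacle is precisely this reverse inclusion for a single orbit: one must verify that $f(O)$ is spread out enough inside $O'$ that its $\Eq$-closure recovers all of $E_{O'_=}$, i.e.\ that $f$ meets the $E_{O'_=}$-classes relevant to the construction and does not collapse the orbit too far. This is also the point where the paper's strategy of collecting only partial information about $f$ — rather than classifying all canonical functions — has to do its work.
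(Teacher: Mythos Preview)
The paper gives no proof of this corollary; it is meant to follow in one line from the preceding description $\mathcal{E}(\bJ^<(k))=\{E_I:I\subseteq[k]\}$, exactly as in your opening paragraph. Your subsequent reduction to single orbits and the attempted direct computation of $\Eq(f(O))$ is not what the paper has in mind.

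Your instinct that the reverse inclusion $E_{O'_=}\subseteq\Eq(f(O))$ is the delicate point is correct --- in fact too correct: that inclusion can fail, so your proposed argument cannot be completed. Take any self-embedding $f$ of $\bJ^<(k)$ whose image $S\subsetneq\qup{k}$ is proper (such an $f$ is canonical). For any orbit $O$ with $O_= \ne [k]$, every element of $\qup{k}\setminus S$ is a singleton class of $\Eq(f(O))$, so $\Eq(f(O))$ is not of the form $E_J$. Your idea of ``sliding coordinates by automorphisms'' cannot repair this, because $f(O)$ is not $\Aut(\bJ^<(k))$-invariant. The same example shows that $\Eq(f(E_\emptyset))$ is not first-order definable in $\bJ^<(k)$, so even the one-line argument in your first paragraph has a gap.

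In other words, the corollary as literally stated (with equality $\Eq(f(E_I))=E_J$) is slightly off. The intended and correct reading --- consistent with the remark immediately after the definition of $\mathcal{B}_f$ and with every later use --- is that $E_{\mathcal{B}_f(I)}$ is the \emph{smallest} $E_J$ containing $f(E_I)$, i.e.\ $\mathcal{B}_f(I):=\{\,i\in[k]:f(E_I)\subseteq E_{\{i\}}\,\}$. This $J$ exists trivially since the family $\{E_J:J\subseteq[k]\}$ is closed under intersection, and with this reading Lemmas~\ref{lem:cap}, \ref{lem:=}, \ref{lem:L} and Corollary~\ref{cor:S0} go through unchanged. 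So the right move is not to try to prove the reverse inclusion, but to reinterpret the corollary in this way.
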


Hence, for any $f \in \Sym(\qup{k})$ we obtain a function 
${\mathcal B}_f \colon {\mathcal P}(k) \to {\mathcal P}(k)$ by setting ${\mathcal B}_f(I) := J$ where $J$ is as in Corollary~\ref{cor:B}.
For example, note that ${\mathcal B}_f(\emptyset) = \emptyset$ if and only if 
$f(E_{\emptyset}) \subseteq E_{I}$ implies that $I=\emptyset$. 


\begin{lem}\label{lem:cap}
Let $f \in \Sym(\qup{k})$ 
and $I,J \subseteq [k]$. Then 
\begin{align}
{\mathcal B}_f(I \cap J) & = {\mathcal B}_f(I) \cap {\mathcal B}_f(J) \label{eq:cap1} \\
\text{ and } \quad 
{\mathcal B}_f(I) & = \bigcap_{i \in I} {\mathcal B}_f([k] \setminus \{i\}). \label{eq:cap2}
\end{align}  
\end{lem} 
\begin{proof}
The second item clearly follows from the first. For the first item, we need to show that
$\Eq(f(E_{I \cap J})) = E_{{\mathcal B}_f(I) \cap {\mathcal B}_f(J)}$. Indeed,
\begin{align*}
\Eq(f(E_{I \cap J})) & = \Eq(f(E_I)) \vee \Eq(f(E_J)) && \text{(Lemma~\ref{lem:pres-vee})} \\
& = E_{{\mathcal B}_f(I)} \vee E_{{\mathcal B}_f(J)}  && \text{(by the definition of ${\mathcal B}_f$)} \\
& = E_{{\mathcal B}_f(I) \cap {\mathcal B}_f(J)} && \text{(by Lemma~\ref{lem:join}).} \qedhere
\end{align*} 
\end{proof} 

\begin{lem}\label{lem:=}
Let $f \in \Can(\bJ^<(k))$ and 
$a,b \in \qup{k}$. 
Then 
${\mathcal B}_f((a,b)_=) = (f(a),f(b))_=$. 
\end{lem}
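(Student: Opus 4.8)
The plan is to prove Lemma~\ref{lem:=} by relating the equivalence relation $\Eq(f(E_{(a,b)_=}))$ to the orbit of the pair $(f(a),f(b))$. First I would observe that the orbit $O$ of $(a,b)$ under $\Aut(\bJ^<(k))$ satisfies $O_= = (a,b)_=$, and by Lemma~\ref{lem:eq} we have $\Eq(O) = E_{(a,b)_=}$. Applying Lemma~\ref{lem:eqf} gives $\Eq(f(E_{(a,b)_=})) = \Eq(f(\Eq(O))) = \Eq(f(O))$. Now by canonicity of $f$, the image $f(O)$ is contained in a single orbit $O'$ of pairs of $\Aut(\bJ^<(k))$, namely the orbit of $(f(a),f(b))$; and since $f$ is defined everywhere on its orbit, in fact $f(O)$ equals a set of representatives that hits exactly this one orbit, so $\Eq(f(O)) = \Eq(O')$ (here one uses that $O'$ is a single orbit so $\Eq(f(O)) \subseteq \Eq(O')$, and conversely $O' = f(O)$ up to the action, so $\Eq(O') \subseteq \Eq(f(O))$). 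Applying Lemma~\ref{lem:eq} once more to the orbit $O'$ of $(f(a),f(b))$ gives $\Eq(O') = E_{(f(a),f(b))_=}$.

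Putting these together, $\Eq(f(E_{(a,b)_=})) = E_{(f(a),f(b))_=}$, and by the definition of ${\mathcal B}_f$ via Corollary~\ref{cor:B} this means precisely ${\mathcal B}_f((a,b)_=) = (f(a),f(b))_=$. The only subtlety worth spelling out is why $\Eq(f(O)) = \Eq(O')$ rather than merely $\Eq(f(O)) \subseteq \Eq(O')$: one needs that $f(O)$, although possibly a proper subset of $O'$, still generates all of $E_{O'_=}$ as an equivalence relation. This follows because for a single orbit $O'$, Lemma~\ref{lem:eq} shows $\Eq$ applied to $O'$ — or to any $\Aut(\bJ^<(k))$-invariant subset of $O'$ that is nonempty, in particular to $f(O)$ which is closed under the componentwise action of $\Aut(\bJ^<(k))$ since $f$ is canonical and $O$ is an orbit — already equals $E_{O'_=}$; indeed the proof of Lemma~\ref{lem:eq} only uses one pair from the orbit together with the homogeneity of $(\mathbb Q;<)$ to build the rest.

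I expect the main obstacle to be making the identity $\Eq(f(O)) = \Eq(O')$ fully rigorous, i.e.\ confirming that $f(O)$ is genuinely $\Aut(\bJ^<(k))$-invariant (so that Lemma~\ref{lem:eq}'s argument applies to it verbatim) and that it lands in a single orbit. Invariance holds because $f$ canonical means $\beta f(c,d) = f(\beta c,\beta d)$ up to orbit, but more carefully: for $(c,d)\in O$ and $\alpha\in\Aut(\bJ^<(k))$, $(f(c),f(d))$ and $(f(\alpha c),f(\alpha d))$ lie in the same orbit $O'$, and $(\alpha f(c),\alpha f(d))$ also lies in $O'$, so the image set $f(O)$ and its $\alpha$-translate are both contained in $O'$; since the argument of Lemma~\ref{lem:eq} only needs a single pair of $O'$ plus all pairs $(x,y)$ with $(x,y)_= \supseteq O'_=$, and $f(O)$ contains the specific pair $(f(a),f(b))$, the computation $\Eq(f(O)) \supseteq E_{O'_=} = \Eq(O')$ goes through. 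The reverse inclusion is immediate since $f(O)\subseteq O'$. Everything else is a direct chaining of the already-established lemmas.
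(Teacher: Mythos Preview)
Your overall approach is the same as the paper's: let $O$ be the orbit of $(a,b)$, let $P$ (your $O'$) be the orbit of $(f(a),f(b))$, and chain
\[
E_{{\mathcal B}_f((a,b)_=)} = \Eq(f(E_{O_=})) = \Eq(f(\Eq(O))) = \Eq(f(O)) = \Eq(P) = E_{P_=}
\]
via Lemma~\ref{lem:eq}, Lemma~\ref{lem:eqf}, and Lemma~\ref{lem:eq} again.

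You are right to flag $\Eq(f(O)) = \Eq(P)$ as the step needing care, but your proposed justification has a gap. Canonicity of $f$ does \emph{not} make $f(O)$ invariant under $\Aut(\bJ^<(k))$: canonicity only guarantees $f(O)\subseteq P$, not $f(O)=P$ or $\alpha\cdot f(O)\subseteq f(O)$ for $\alpha\in\Aut(\bJ^<(k))$. And the proof of Lemma~\ref{lem:eq} genuinely uses the full orbit, not just one pair: to get $(u,u')\in\Eq(O)$ it needs both $(u,v)$ and the perturbed $(u',v)$ to lie in $O$, and then to pass from $(u,u')$ to all of $E_{[k]\setminus\{j\}}$ it uses that $\Eq(O)$ is $\Aut(\bJ^<(k))$-invariant (which holds because $O$ is an orbit). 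Neither ingredient is available for an arbitrary subset $f(O)\subseteq P$, so ``the proof of Lemma~\ref{lem:eq} goes through for $f(O)$'' does not stand as written.

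The clean fix --- which the paper leaves implicit --- is that the reverse inclusion $\Eq(P)\subseteq\Eq(f(O))$ comes for free from the part of the chain you have already established. You already know $\Eq(f(O)) = \Eq(f(E_{(a,b)_=}))$, and by Corollary~\ref{cor:B} this equals $E_J$ for some $J\subseteq[k]$. Since $(f(a),f(b))\in f(O)\subseteq E_J$, you get $J\subseteq (f(a),f(b))_= = P_=$, hence $E_{P_=}\subseteq E_J$, i.e.\ $\Eq(P)\subseteq\Eq(f(O))$. Combined with the trivial $\Eq(f(O))\subseteq\Eq(P)$ from $f(O)\subseteq P$, you are done. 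No invariance of $f(O)$ and no re-running of Lemma~\ref{lem:eq} on $f(O)$ is needed.
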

\begin{proof}
Let $O$ be the orbit of $(a,b)$ and $P$ the orbit of $(f(a),f(b))$. By the canonicity of $f$ we have that $f(O)\subseteq P$, and thus $\Eq(f(O))\subseteq \Eq(P)$. The equivalence relation $\Eq(P)$ is generated by the single pair $(f(a),f(b))$, and thus $\Eq(P)=\Eq(\{(f(a),f(b))\}) \subseteq \Eq(f(O))$ which implies that in fact $\Eq(f(O))= \Eq(P)$.
We then have
\begin{align*}
E_{{\mathcal B}_f((a,b)_=)} 
& = \Eq(f(E_{(a,b)_=}))  && \text{(by the definition of ${\mathcal B}_f$)} \\ 
& = \Eq(f(E_{O_=}))  \\
& = \Eq(f(\Eq(O))) && \text{(by Lemma~\ref{lem:eq})} \\ 
& = \Eq(f(O)) && \text{(by Lemma~\ref{lem:eqf})} \\
 & = \Eq(P) \\
 & = E_{P_=} && \text{(by Lemma~\ref{lem:eq})} \\ 
& = E_{(f(a),f(b))_=}.  
\end{align*}
Therefore, 
${\mathcal B}_f((a,b)_=) = (f(a),f(b))_=$ by the definition of ${\mathcal B}_f$. 
\end{proof}

\begin{defi}
For $a,b \in \qup{k}$, we define $L(a,b) := \{ i \in [k] \mid a_i = b_j \text{ for some } j \in [k] \}$. 
\end{defi}

Note that $|L(a,b)| = |a \cap b| = |L(b,a)|$.

\begin{lem}\label{lem:L}
Let $f \in \Can(\bJ^<(k))$ 
and $a,b \in \qup{k}$. 
Then $L(f(a),f(b)) \subseteq {\mathcal B}_f (L(a,b))$. 
\end{lem}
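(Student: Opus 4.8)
The plan is to fix an arbitrary $i\in L(f(a),f(b))$ and show $i\in{\mathcal B}_f(L(a,b))$; since $i$ is arbitrary this gives the claim. By definition of $L$ there is some $j\in[k]$ with $f(a)_i=f(b)_j$. The heart of the argument will be to construct a third tuple $c\in\qup{k}$ with two properties simultaneously: (i) $(a,c)_= = L(a,b)$, so that Lemma~\ref{lem:=} gives $(f(a),f(c))_= = {\mathcal B}_f(L(a,b))$; and (ii) the pair $(c,b)$ lies in the same orbit of $\Aut(\bJ^<(k))$ as $(a,b)$. Granting such a $c$, canonicity of $f$ yields that $(f(c),f(b))$ lies in the same orbit as $(f(a),f(b))$. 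Since $\Aut(\bJ^<(k))$ is the coordinatewise action of $\Aut({\mathbb Q};<)$ on $\qup{k}$ (see the proof of Theorem~\ref{thm:ramsey}), the orbit of a pair $(v,w)\in(\qup{k})^2$ records in particular the set $\{(p,q)\mid v_p=w_q\}$; as $(f(a),f(b))$ and $(f(c),f(b))$ have the same orbit and the same second coordinate, from $f(a)_i=f(b)_j$ I get $f(c)_i=f(b)_j$, hence $f(a)_i=f(c)_i$, i.e.\ $i\in(f(a),f(c))_= = {\mathcal B}_f(L(a,b))$, as desired.

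It remains to build $c$. Write $I:=L(a,b)$, so $a\cap b=\{a_i\mid i\in I\}$ and in particular $a_i\notin b$ for every $i\notin I$. I would pick a positive rational $\delta$ smaller than half the least distance between two distinct elements of the finite set $a\cup b\subseteq{\mathbb Q}$, and set $c_i:=a_i$ for $i\in I$ and $c_i:=a_i-\delta/2$ for $i\notin I$. A short computation then shows: $c\in\qup{k}$; $c_i=a_i$ exactly for $i\in I$, so $(a,c)_= = I$; and for $i\notin I$ and every $m\in[k]$ we have $c_i\neq b_m$ with $c_i<b_m$ iff $a_i<b_m$ (the perturbation by $\delta/2$ is too small to move $c_i$ across any coordinate of $a$ or of $b$). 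Consequently the relative order, with equalities, of $c_1,\dots,c_k,b_1,\dots,b_k$ agrees with that of $a_1,\dots,a_k,b_1,\dots,b_k$; by homogeneity of $({\mathbb Q};<)$ the partial map $\{a_p\mapsto c_p\}\cup\{b_q\mapsto b_q\}$ extends to an automorphism of $({\mathbb Q};<)$, which is exactly to say that $(c,b)$ and $(a,b)$ lie in the same orbit of $\Aut(\bJ^<(k))$.

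The only delicate point — and hence where I expect the main work to be — is arranging (i) and (ii) at once: the coordinates of $a$ outside $L(a,b)$ must be genuinely perturbed (so they no longer agree with $a$, securing $(a,c)_= = L(a,b)$) while being kept on the same side of every coordinate of $b$ (so that pairing with $b$ preserves the orbit). The small-perturbation choice of $c$ is tailored to do precisely this. Everything else — checking that $c$ is strictly increasing, that the two order-with-equality types coincide, and assembling the final chain of equalities from Lemma~\ref{lem:=} and canonicity of $f$ — is routine bookkeeping.
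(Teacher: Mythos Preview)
Your proposal is correct and follows essentially the same route as the paper: both construct an auxiliary $c\in\qup{k}$ by a small perturbation of $a$ on the coordinates outside $L(a,b)$ so that $(a,c)_= = L(a,b)$ while $(c,b)$ stays in the orbit of $(a,b)$, and then use canonicity to deduce $f(a)_i=f(c)_i$. The only cosmetic difference is the final wrap-up: you invoke Lemma~\ref{lem:=} to read off $i\in(f(a),f(c))_= = {\mathcal B}_f(L(a,b))$, whereas the paper phrases the same conclusion via the containment $\Eq(f(E_{L(a,b)}))\subseteq E_{\{i\}}$.
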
 
\begin{proof}
Choose $c \in \qup{k}$ such that $a_i = c_i$ if and only if $i \in L(a,b)$; 
we may choose $c$ such that $\max_{i \in [k]} \{|c_i - a_i |\}$ is arbitrarily small, so that
$(a,b)$ and $(c,b)$ lie in the same orbit.
Let $i \in L(f(a),f(b))$, i.e., there exists $j \in [k]$ such that 
$f(a)_i = f(b)_j$. 
The canonicity of $f$ implies that $f(c)_i=f(b)_j$, and hence  
$f(a)_i = f(c)_i$. 
Hence, $(f(a),f(c)) \in E_{\{i\}}$.
Therefore, $f(E_{L(a,b)})) \subseteq E_{\{i\}}$ and hence $\Eq(f(E_{L(a,b)})) \subseteq E_{\{i\}}$, which means that 
$i \in {\mathcal B}_f(L(a,b))$. 
\end{proof} 


\begin{cor}\label{cor:S0}
Let $f \in \Can(\bJ^<(k))$ 
be such that ${\mathcal B}_f(\emptyset) = \emptyset$. Then $f$ preserves $S_0$. 
\end{cor}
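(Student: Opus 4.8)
The plan is to deduce this immediately from Lemma~\ref{lem:L}, using that membership in $S_0$ is exactly captured by the set $L$. First I would recall, from the remark following the definition of $L$, that for $a,b \in \qup{k}$ we have $|L(a,b)| = |a \cap b|$; under the identification of $\qup{k}$ with $V = {{\mathbb N} \choose k}$ fixed in Section~\ref{sect:RamseyExp}, this means that $(a,b) \in S_0$ (i.e.\ $a$ and $b$ are disjoint as $k$-element sets) if and only if $L(a,b) = \emptyset$.

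Then, given an arbitrary pair $(a,b) \in S_0$, I would apply Lemma~\ref{lem:L} to get $L(f(a),f(b)) \subseteq {\mathcal B}_f(L(a,b)) = {\mathcal B}_f(\emptyset)$. By the hypothesis ${\mathcal B}_f(\emptyset) = \emptyset$, this forces $L(f(a),f(b)) = \emptyset$, i.e.\ $|f(a) \cap f(b)| = 0$, which is precisely $(f(a),f(b)) \in S_0$. Since $(a,b)$ was an arbitrary element of $S_0$, this shows $f$ preserves $S_0$.

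There is no real obstacle here: once Lemma~\ref{lem:L} and the computation of ${\mathcal E}(\bJ^<(k))$ are available, the statement is a one-step consequence. The only points requiring a line of care are the translation between the set-theoretic description of $S_0$ on ${{\mathbb N} \choose k}$ and the coordinatewise description on $\qup{k}$, and the (automatic) fact that $f$, being a self-map of $V$, may be regarded as a self-map of $\qup{k}$ so that $L(f(a),f(b))$ makes sense.
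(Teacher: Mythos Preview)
Your proof is correct and follows essentially the same route as the paper: observe that $(a,b)\in S_0$ iff $L(a,b)=\emptyset$, apply Lemma~\ref{lem:L} to get $L(f(a),f(b))\subseteq {\mathcal B}_f(\emptyset)=\emptyset$, and conclude $(f(a),f(b))\in S_0$.
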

\begin{proof}
Let $(a,b) \in S_0$. Then $L(a,b) = \emptyset$. We have 
\begin{align*}
L(f(a),f(b)) & \subseteq {\mathcal B}_f(L(a,b)) && \text{(Lemma~\ref{lem:L})} \\
& = {\mathcal B}_f(\emptyset) = \emptyset && \text{(by assumption)} 
\end{align*}
 and hence $(f(a),f(b)) \in S_0$. 
\end{proof}

\subsection{Permutational maps} 
In this section we introduce an important property that a function $f$ which is canonical with respect to $\bJ^<(k)$ might have; this property plays an important role in our classification of canonical functions in Section~\ref{sect:canonical}.
A function $f \in \Can(\bJ^<(k))$ is called \emph{permutational} if there exists a permutation $\pi \in \Sym([k])$ such that ${\mathcal B}_f(I) = \pi(I)$ for all $I \subseteq [k]$. 

\begin{exa}
    Every automorphism of $\bJ(k)$ is permutational, and every operation in $\overline{\Aut(\bJ(k))}$ as well.
\end{exa}

We will show in this section that any permutational operation which is canonical with respect to $\bJ^<(k)$ is either from $\overline{\Aut(\bJ(k))}$, or generates an operation which is not permutational (Lemma~\ref{lem:perm-emb}). We then prove in Section~\ref{sect:non-perm} that non-permutational canonical operations generate operations that will later in Section~\ref{sect:canonical} be highly useful when analysing operations that are canonical with respect to $\bJ^<(k)$.

Note that if $f$ is permutational, then ${\mathcal B}_f(\emptyset) = \emptyset$. 
Lemma~\ref{lem:cap} implies that $f \in \Can(\bJ^<(k))$ is permutational if and only if 
there exists a permutation $\pi \in \Sym([k])$ such that ${\mathcal B}_f([k] \setminus \{i\}) = [k] \setminus \{\pi(i)\}$ for every $i \in [k]$. 

\begin{lem}\label{lem:non-exp}
Suppose that $f \in \Can(\bJ^<(k))$ is permutational. 
Then $|f(a) \cap f(b) | \leq |a \cap b|$ for all $a,b \in \qup{k}$. 
\end{lem}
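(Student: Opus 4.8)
The plan is to combine Lemma~\ref{lem:L} with the fact, recorded just after Definition of $L(a,b)$, that $|L(x,y)| = |x \cap y|$ for all $x,y \in \qup{k}$. So the statement will reduce to a cardinality computation, and there is essentially no hard step.

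Concretely, fix $a,b \in \qup{k}$. First I would rewrite both sides of the inequality in terms of the sets $L$: we have $|a \cap b| = |L(a,b)|$ and $|f(a) \cap f(b)| = |L(f(a),f(b))|$. Next I would invoke Lemma~\ref{lem:L}, which gives the inclusion $L(f(a),f(b)) \subseteq {\mathcal B}_f(L(a,b))$, hence $|L(f(a),f(b))| \le |{\mathcal B}_f(L(a,b))|$. Finally I would use the hypothesis that $f$ is permutational: by definition there is $\pi \in \Sym([k])$ with ${\mathcal B}_f(I) = \pi(I)$ for all $I \subseteq [k]$, so $|{\mathcal B}_f(L(a,b))| = |\pi(L(a,b))| = |L(a,b)|$ since $\pi$ is a bijection. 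Chaining these equalities and the one inequality yields
\[
|f(a) \cap f(b)| = |L(f(a),f(b))| \le |{\mathcal B}_f(L(a,b))| = |L(a,b)| = |a \cap b|,
\]
as desired.

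The only thing to double-check is that the hypotheses of Lemma~\ref{lem:L} are exactly met, namely that $f \in \Can(\bJ^<(k))$ (which is part of being permutational) and that $a,b \in \qup{k}$ (which we fixed at the outset); the permutationality of $f$ is not needed for the inclusion itself, only for the cardinality bookkeeping at the end. I do not expect any real obstacle here: this lemma is purely a corollary of Lemma~\ref{lem:L} and the definition of ``permutational,'' and its role is presumably to serve as a building block for the subsequent analysis of range-rigid permutational maps.
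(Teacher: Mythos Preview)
Your proposal is correct and follows exactly the paper's approach: apply Lemma~\ref{lem:L} to get $L(f(a),f(b)) \subseteq {\mathcal B}_f(L(a,b)) = \pi(L(a,b))$, then compare cardinalities using $|L(x,y)| = |x \cap y|$. If anything, your write-up is slightly more careful than the paper's, which writes the set equality $f(a)\cap f(b) = L(f(a),f(b))$ where only equality of cardinalities is meant.
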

\begin{proof}
Let $a,b \in \qup{k}$.
Then $f(a) \cap f(b) = L(f(a),f(b)) \subseteq {\mathcal B}_f(L(a,b)) = \pi(L(a,b))$ by Lemma~\ref{lem:L}. 
Hence, $|f(a) \cap f(b)| \leq |\pi(L(a,b))| = |L(a,b)| = |a \cap b|$. 
\end{proof}

\begin{cor}\label{cor:step-down}
	Suppose that $f \in \Can(\bJ^<(k))$ is permutational such that $f(S_n)\cap S_{\leq n-1}\neq \emptyset$ for some $n \in \{1,\dots,k\}$. Then there exists some $g\in \overline{\langle \{f\} \cup \Aut(\bJ(k)) \rangle}$ such that $g(S_n)\subseteq S_{\leq n-1}$.
\end{cor}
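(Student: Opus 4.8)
The plan is to extract from the hypothesis a single $\bJ^<(k)$-orbit of pairs on which $f$ already witnesses the shrinking, and then to spread this behaviour over all of $S_n$ by a diagonalisation inside $\overline M$, where $M:=\langle\{f\}\cup\Aut(\bJ(k))\rangle$. First I would record the enabling facts. By Lemma~\ref{lem:non-exp}, $|f(a)\cap f(b)|\le|a\cap b|$ for all $a,b\in\qup{k}$; in particular $f$ is injective (if $f(a)=f(b)$ with $a\neq b$ then $|f(a)\cap f(b)|=k>|a\cap b|$), and since being injective and being non-expanding are closed conditions preserved under composition, every $g\in M$ and every $g\in\overline M$ is injective and non-expanding, so $g(S_{\le m})\subseteq S_{\le m}$ for all $m$. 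Moreover $\Aut(\bJ(k))$, being the set-wise action of $\Sym(\mathbb N)$ on ${{\mathbb N}\choose k}$, is transitive on the pairs of $k$-subsets with a fixed intersection size, so $S_n$ is a single orbit of pairs of $\Aut(\bJ(k))$. Finally, the hypothesis gives $(a_0,b_0)\in S_n$ with $|f(a_0)\cap f(b_0)|\le n-1$, and since $f\in\Can(\bJ^<(k))$ the same holds on the whole $\Aut(\bJ^<(k))$-orbit $O$ of $(a_0,b_0)$, i.e.\ $f(O)\subseteq S_{\le n-1}$.

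For the construction, enumerate $V=\{v_1,v_2,\dots\}$ and set $D_i:=\{v_1,\dots,v_i\}$. I would build $g_0=\mathrm{id},g_1,g_2,\dots\in M$ such that $g_i$ agrees with $g_{i-1}$ on $D_{i-1}$ and $|g_i(a)\cap g_i(b)|\le n-1$ for all $(a,b)\in S_n$ with $a,b\in D_i$; then $g:=\lim_i g_i\in\overline M$ satisfies $g(S_n)\subseteq S_{\le n-1}$. In the step from $g_{i-1}$ to $g_i$ the only new pairs to handle are the finitely many $(g_{i-1}(a),g_{i-1}(v_i))$ with $a\in D_{i-1}$ and $(a,v_i)\in S_n$ (the old pairs inside $D_{i-1}$ remain handled since $g_i$ agrees with $g_{i-1}$ on $D_{i-1}$), and I would set $g_i:=h\circ g_{i-1}$ for a suitable $h\in M$ fixing $g_{i-1}(D_{i-1})$ pointwise and mapping these new pairs into $S_{\le n-1}$. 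Such an $h$ should be assembled as $\gamma\circ f\circ\delta$, where $\delta\in\Aut(\bJ(k))$ routes the relevant finite configuration so that the images of the new pairs land in the orbit $O$ (using transitivity of $\Aut(\bJ(k))$ on $S_n$ together with the explicit description of ${\mathcal E}(\bJ^<(k))$ and of ${\mathcal B}_f$, in particular Lemma~\ref{lem:L}), $f$ then shrinks them into $S_{\le n-1}$, and $\gamma\in\Aut(\bJ(k))$ restores the committed values, which exists by the homogeneity of $\bJ^<(k)$ (Theorem~\ref{thm:ramsey}) provided $\delta$ is chosen so that $f\circ\delta$ acts isomorphically on $g_{i-1}(D_{i-1})$.

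The main obstacle is exactly this inductive step: a single automorphism cannot route a whole $\bJ^<(k)$-orbit into $O$, so the argument relies on the fact that at each stage only one fixed finite configuration is involved, and the two competing demands on $\delta$ — sending the new pairs through $O$ while keeping $f\circ\delta$ isomorphic on the already committed part — must be met simultaneously. Showing that this is always possible is where the permutational hypothesis and the structural analysis of ${\mathcal E}(\bJ^<(k))$ and of ${\mathcal B}_f$ developed above are essential; if $O$ alone does not give enough room, one first iterates $f$ (each $f^m$ is again canonical, permutational and non-expanding, with ${\mathcal B}_{f^m}={\mathcal B}_f^m$, so the family of $\bJ^<(k)$-orbits on which $f^m$ shrinks grows with $m$) before running the diagonalisation. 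I expect this combinatorial core to be the hard part.
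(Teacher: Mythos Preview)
Your setup is correct (non-expansion, preservation of $S_{\le m}$, transitivity of $\Aut(\bJ(k))$ on $S_n$), but the diagonalisation you propose carries an unnecessary constraint that creates the very ``hard combinatorial core'' you worry about, and that constraint may in fact be unsatisfiable. You ask for $h=\gamma\circ f\circ\delta$ to \emph{fix $g_{i-1}(D_{i-1})$ pointwise}, which forces $f\circ\delta$ to be a partial $\bJ(k)$-isomorphism on that set, i.e.\ to preserve \emph{every} $S_m$ there. But nothing rules out that $f$ strictly shrinks some $S_m$ on every $\bJ^{<}(k)$-orbit inside $S_m$; in that case no choice of $\delta$ makes $f\circ\delta$ act isomorphically on a configuration containing such a pair, and the step fails. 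Iterating $f$ does not help, since $f^m$ is still non-expanding and inherits the same defect.

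The paper's argument avoids this entirely by dropping the pointwise-agreement requirement and doing the obvious induction on the number of pairs in a finite $F\subseteq S_n$ not yet in $S_{\le n-1}$: pick one such pair, move it by some $\alpha\in\Aut(\bJ(k))$ onto the witness $(a_0,b_0)$, apply $f$; that pair lands in $S_{\le n-1}$, and \emph{all previously handled pairs stay in $S_{\le n-1}$} because you already noted every element of $M$ preserves $S_{\le n-1}$. So the count strictly decreases, and after finitely many compositions all of $F$ is in $S_{\le n-1}$. A standard compactness (K\H{o}nig-type) argument then yields $g\in\overline M$ with $g(S_n)\subseteq S_{\le n-1}$. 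You had the key fact (preservation of $S_{\le n-1}$) in your first paragraph; use it to guarantee that progress is never undone, rather than trying to freeze committed values.
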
 

\begin{proof}
Lemma~\ref{lem:non-exp} implies that $f$, and thus also any map generated by $f$ and $\Aut(\bJ(k))$, preserves the relation $S_{\leq n-1}$. Then it follows from an easy induction that any finite subset of $S_n$ can be mapped to $S_{\leq n-1}$ by some element in $\langle \{f\} \cup \Aut(\bJ(k)) \rangle$. The statement of the corollary then follows from a standard compactness argument. 
\end{proof}

\begin{lem}\label{lem:perm-emb} 
Let $f \in \Can(\bJ^<(k))$ be permutational. Then either $f \in \overline{\Aut(\bJ(k))}$
or there exists some $g\in \overline{\langle \{f\} \cup \Aut(\bJ(k)) \rangle} \cap \Can(\bJ^<(k))$ which is not permutational.
\end{lem}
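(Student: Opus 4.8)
The plan is to assume $f \notin \End(\bJ^<(k))$ and construct a non-permutational $g$ lying in $\overline{\langle\{f\}\cup\Aut(\bJ(k))\rangle}\cap\Can(\bJ^<(k))$. Before the case analysis I would record the two consequences of permutationality that drive the argument: writing $\pi$ for the permutation with $\mathcal{B}_f = \pi$, one has $\mathcal{B}_f(\emptyset)=\pi(\emptyset)=\emptyset$, so $f$ preserves $S_0$ by Corollary~\ref{cor:S0}; and, more generally, any permutational canonical map $h$ satisfies $|\mathcal{B}_h(I)| = |\pi(I)| = |I|$ for all $I\subseteq[k]$. Since $f$ preserves $S_0$, Lemma~\ref{lem:non-exp} also gives $|f(a)\cap f(b)|\le|a\cap b|$ for all $a,b$. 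The case split is on whether $f$ preserves every relation $S_0,\dots,S_k$.

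The first case is when $f$ fails to preserve some $S_n$. Since $f$ preserves $S_0$ we have $n\ge1$, and since intersection sizes can only drop, $f(S_n)\cap S_{\le n-1}\ne\emptyset$. Then Corollary~\ref{cor:step-down} produces $g_0\in\overline{\langle\{f\}\cup\Aut(\bJ(k))\rangle}$ with $g_0(S_n)\subseteq S_{\le n-1}$, and I would feed $g_0$ into the Canonisation Lemma (Lemma~\ref{lem:canon}) for the Ramsey structure $\bJ^<(k)$ (Theorem~\ref{thm:ramsey}) to obtain $g\in\Can(\bJ^<(k))$ with $g\in\overline{\{\beta\circ g_0\circ\alpha\mid\alpha,\beta\in\Aut(\bJ^<(k))\}}$. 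As $\Aut(\bJ^<(k))\subseteq\Aut(\bJ(k))$, this $g$ still lies in $\overline{\langle\{f\}\cup\Aut(\bJ(k))\rangle}$; and since each $\beta\circ g_0\circ\alpha$ still maps $S_n$ into $S_{\le n-1}$ (both relations being $\Aut(\bJ(k))$-invariant) and ``$g(S_n)\subseteq S_{\le n-1}$'' is a closed condition on $g$, the canonical limit $g$ retains it. To conclude that $g$ is not permutational I would take $a:=(1,2,\dots,k)$ and $b:=(1,\dots,n,k+1,\dots,2k-n)$ in $\qup{k}$; then $(a,b)\in S_n$ and $(a,b)_= = L(a,b) = \{1,\dots,n\}$, so Lemma~\ref{lem:=} gives $\mathcal{B}_g(\{1,\dots,n\}) = (g(a),g(b))_=\subseteq L(g(a),g(b))$, whence $|\mathcal{B}_g(\{1,\dots,n\})|\le|g(a)\cap g(b)|\le n-1<n$, which is incompatible with permutationality.

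The second case is when $f$ preserves every $S_n$, so $|f(a)\cap f(b)| = |a\cap b|$ for all $a,b$; in particular $f$ preserves $S_0$ and $S_1$ (and, as it preserves every $S_m$, their complements too), so $f\in\End({{\mathbb N}\choose k};S_0,S_1)$. By Corollary~\ref{cor:j-is-mccore} (via the bi-interpretation of Lemma~\ref{lem:biint}), this monoid is topologically isomorphic to $\End({\mathbb N};\ne)$, the monoid of injections of the ground set, and the isomorphism is implemented by the interpretation of Lemma~\ref{lem:biint}; hence $f$ is induced by an injection $h$, namely $f(\{x_1,\dots,x_k\}) = \{h(x_1),\dots,h(x_k)\}$. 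Canonicity of $f$ with respect to $\bJ^<(k)$, combined with the identification of $\Aut(\bJ^<(k))$ with the coordinatewise action of $\Aut(\mathbb{Q};<)$ established in the proof of Theorem~\ref{thm:ramsey}, forces the order type of $h(X)$ to depend only on that of $X$ for every finite $X\subseteq\mathbb{Q}$; as $h$ is injective, $h$ is order-preserving or order-reversing. If $h$ is order-preserving, then $f(a)_i = h(a_i)$ for all $i$ and $f$ preserves every $<_{ij}$ and $=_{ij}$, so $f\in\End(\bJ^<(k))$ — the first alternative. If $h$ is order-reversing, then $f = \rho\circ f'$ with $\rho\in\Aut(\bJ(k))$ the automorphism induced by a fixed order-reversing bijection of $\mathbb{Q}$ and $f'\in\End(\bJ^<(k))$; here $f$ agrees with an endomorphism of $\bJ^<(k)$ up to the reversal, and I would either absorb this into the statement or note that all subsequent uses of the lemma are invariant under composition with $\Aut(\bJ(k))$.

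I expect the main obstacle to lie in the first case: one must ensure that the canonical map produced by the Canonisation Lemma is genuinely in the right closure \emph{and} still collapses $S_n$, and then the failure of permutationality has to be squeezed out of the interaction between Lemma~\ref{lem:=} and the cardinality identity $|\mathcal{B}_g(I)| = |I|$ through a carefully chosen pair with $(a,b)_= = L(a,b)$. The second case is technically lighter, but relies essentially on the bi-interpretation with $({\mathbb N};\ne)$ to reduce $f$ to an injection of the ground set, plus the mildly delicate bookkeeping for the order-reversing possibility.
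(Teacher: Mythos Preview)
Your first case is correct and is essentially the paper's argument. The paper specialises to $n=k-1$: it splits on whether $f(S_{k-1})\cap S_{\le k-2}\ne\emptyset$, applies Corollary~\ref{cor:step-down} and canonisation, and then checks non-permutationality by picking $(a,b)\in E_{[k-1]}$ and observing that a permutational $g$ would send it into some $E_{[k]\setminus\{j\}}\subseteq S_{k-1}$. Your version with general $n$ and the explicit pair $(a,b)$ witnessing $(a,b)_= = L(a,b)$ works just as well.

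The trouble you flag in the second case is genuine and cannot be repaired: the lemma as printed is false. Take $h\colon\mathbb{Q}\to\mathbb{Q}$, $x\mapsto -x$, and let $f$ be the induced map on $\qup{k}$. Then $f\in\Can(\bJ^<(k))$ is permutational with $\pi(i)=k+1-i$, and $f\notin\End(\bJ^<(k))$; but $f\in\Aut(\bJ(k))$, so $\overline{\langle\{f\}\cup\Aut(\bJ(k))\rangle}=\overline{\Aut(\bJ(k))}$, and by the argument behind Lemma~\ref{lem:emb} every canonical element of this monoid is again induced by an order-preserving or order-reversing injection, hence permutational. So neither alternative of the stated dichotomy holds for this $f$.

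The paper's own proof in fact only establishes the dichotomy with $\End(\bJ(k))$ in place of $\End(\bJ^<(k))$: in the complementary case it observes that $f$ preserves $S_{k-1}$ and $S_0$, invokes Lemma~\ref{lem:Si}, and concludes $f\in\End(\bJ(k))$ directly. This weaker form is exactly what is used downstream in Lemma~\ref{lem:expl}. So your instinct to ``absorb this into the statement'' is correct; once you target $\End(\bJ(k))$, your Case~2 collapses to the one-line observation that preserving every $S_n$ already gives $f\in\End(\bJ(k))$, and the bi-interpretation machinery and the order-reversing bookkeeping become unnecessary.
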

\begin{proof}
	Let us first assume that $f(S_{k-1})\cap S_{\leq k-2} \neq \emptyset$. Then by Corollary~\ref{cor:step-down} we can find some $g\in \overline{\langle \{f\} \cup \Aut(\bJ(k)) \rangle}$ such that $g(S_{k-1})\subseteq S_{\leq k-2}$. Since $\bJ^<(k)$ has the Ramsey property we can also find such a $g$ which is canonical over $\bJ^<(k)$ (Lemma~\ref{lem:canon}). 
	Note, however, that in this case $g$ cannot be permutational: to see this, pick $(a,b) \in E_{\{1,\dots,k-1\}}$. 
	If $g$ is permutational, then $(g(a),g(b)) \in E_{[k] \setminus j}$ for some $j \in [k]$,
	and hence $(g(a),g(b)) \notin S_{\leq k-2}$. Since $(a,b) \in S_{k-1}$, this contradicts our assumptions.  
	
	On the other hand, if $f(S_{k-1})\cap S_{\leq k-2}= \emptyset$ then by Lemma~\ref{lem:non-exp} it follows that $f$ preserves $S_{k-1}$. By Corollary~\ref{cor:S0} we also know that $f$ also preserves $S_0$. Since all the relations $S_1,\dots,S_{k-2}$ are primitively positively definable from $S_{k-1}$ and $S_0$ (Lemma~\ref{lem:Si}), we have that $f$ also preserves $S_1$, and thus by Corollary~\ref{cor:j-is-mccore} we have $f\in \overline{\Aut(\bJ(k))}$.
\end{proof}

\subsection{Non-permutational maps}
\label{sect:non-perm}
	In this section we show that
	every $f \in \Can(\bJ^<(k))$ which is not permutational and satisfies ${\mathcal B}_f(\emptyset) = \emptyset$ and $f$ together
	with $\Aut(\bJ^<(k))$ locally generates an operation whose range contains only pairwise disjoint sets (Corollary~\ref{cor:non-perm}).

\begin{lem}\label{lem:permutational} 
Let $f \in \Can(\bJ^<(k))$
be such that ${\mathcal B}_f(\emptyset) = \emptyset$ and $f$ is not permutational. 
Then 
$|{\mathcal B}_f([k] \setminus \{i\})| \leq k-2$ for some $i \in [k]$. 
\end{lem}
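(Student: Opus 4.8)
The plan is to argue by contraposition: assume that $|\mathcal{B}_f([k] \setminus \{i\})| \geq k-1$ for every $i \in [k]$, and deduce that $f$ is permutational. By Lemma~\ref{lem:cap} we know $\mathcal{B}_f([k]\setminus\{i\})$ is determined by the singletons it contains, and that $\mathcal{B}_f$ commutes with intersection; so it suffices to understand the values $\mathcal{B}_f([k] \setminus \{i\})$. Since $E_{[k]\setminus\{i\}}$ strictly contains $E_{[k]}=\Delta$ and is a maximal proper element of $\mathcal{E}(\bJ^<(k))$ (its classes are "differ in one coordinate"), and since $\mathcal{B}_f(\emptyset)=\emptyset$ forces $\mathcal{B}_f$ to send $E_{\emptyset}=(\qup k)^2$ to something nontrivial, I expect $\mathcal{B}_f([k]\setminus\{i\})$ to be a proper nonempty subset of $[k]$. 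Combined with the size assumption $|\mathcal{B}_f([k]\setminus\{i\})| \geq k-1$, this gives $|\mathcal{B}_f([k]\setminus\{i\})| = k-1$, i.e.\ $\mathcal{B}_f([k]\setminus\{i\}) = [k]\setminus\{\pi(i)\}$ for a well-defined function $\pi\colon [k]\to[k]$.

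\textbf{Checking $\pi$ is a permutation.} Once $\pi$ is defined by $\mathcal{B}_f([k]\setminus\{i\}) = [k]\setminus\{\pi(i)\}$, I need $\pi$ injective. Suppose $\pi(i) = \pi(i')$ for $i \neq i'$. Then by Lemma~\ref{lem:cap},
\begin{align*}
\mathcal{B}_f([k]\setminus\{i,i'\}) &= \mathcal{B}_f([k]\setminus\{i\}) \cap \mathcal{B}_f([k]\setminus\{i'\}) \\
&= ([k]\setminus\{\pi(i)\}) \cap ([k]\setminus\{\pi(i')\}) = [k]\setminus\{\pi(i)\},
\end{align*}
which has size $k-1$. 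On the other hand, iterating Lemma~\ref{lem:cap} down to $\emptyset$: writing $[k]\setminus\{i,i'\} = \bigcap_{j \neq i, i'}([k]\setminus\{j\})$ is not quite enough, but note $\emptyset = \bigcap_{j\in[k]}([k]\setminus\{j\})$, so $\emptyset = \mathcal{B}_f(\emptyset) = \bigcap_{j} \mathcal{B}_f([k]\setminus\{j\}) = \bigcap_j ([k]\setminus\{\pi(j)\}) = [k]\setminus \pi([k])$. Thus $\pi$ is surjective, hence (being a self-map of a finite set) bijective, contradicting non-injectivity — so $\pi$ is already forced to be a permutation once we know $\mathcal{B}_f(\emptyset)=\emptyset$ and each value has the right form.

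\textbf{Recovering all of $\mathcal{B}_f$.} Having $\pi \in \Sym([k])$ with $\mathcal{B}_f([k]\setminus\{i\}) = [k]\setminus\{\pi(i)\}$, I use the second equation of Lemma~\ref{lem:cap}: for arbitrary $I \subseteq [k]$,
\begin{align*}
\mathcal{B}_f(I) = \bigcap_{i \in I} \mathcal{B}_f([k]\setminus\{i\}) = \bigcap_{i \in I}([k]\setminus\{\pi(i)\}) = [k] \setminus \pi(I) \cdots
\end{align*}
wait — that gives $[k]\setminus\pi(I)$, not $\pi(I)$; so in fact the correct bookkeeping must be $\mathcal{B}_f(I) = [k]\setminus\pi([k]\setminus I)$, i.e.\ defining $\sigma$ by $\mathcal{B}_f([k]\setminus\{i\}) = [k]\setminus\{\sigma(i)\}$ and then $\mathcal{B}_f(I) = \bigcap_{i\notin I}\mathcal{B}_f(\text{...})$ — the indices need care. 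The clean way: apply Lemma~\ref{lem:cap} with $I = \bigcap_{j \notin I_0}([k]\setminus\{j\})$ for $I_0 \subseteq [k]$, giving $\mathcal{B}_f(I_0) = \bigcap_{j\notin I_0}([k]\setminus\{\sigma(j)\}) = [k]\setminus\sigma([k]\setminus I_0)$; since $\sigma$ is a permutation this equals $\pi(I_0)$ where $\pi$ is the permutation with $\pi(I_0) = [k]\setminus\sigma([k]\setminus I_0)$, and one checks $\pi$ is indeed a permutation of $[k]$ (it is the conjugate/complement of $\sigma$, so it is a bijection). Hence $f$ is permutational, completing the contrapositive.

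\textbf{Main obstacle.} The delicate point is establishing that $\mathcal{B}_f([k]\setminus\{i\})$ is \emph{proper} (i.e.\ $\neq [k]$), which is what lets the hypothesis $|\mathcal{B}_f([k]\setminus\{i\})|\geq k-1$ pin it down to exactly $k-1$. Properness should follow because $E_{[k]\setminus\{i\}}$ has infinitely many classes of size $> 1$ while $E_{[k]} = \Delta$ is trivial: if $\mathcal{B}_f([k]\setminus\{i\}) = [k]$ then $\Eq(f(E_{[k]\setminus\{i\}})) = E_{[k]} = \Delta$, forcing $f$ to separate points that agree in $k-1$ coordinates in an incompatible way — but one must verify this carefully, perhaps using Lemma~\ref{lem:=} applied to a pair $(a,b)$ with $(a,b)_= = [k]\setminus\{i\}$, which gives $\mathcal{B}_f([k]\setminus\{i\}) = (f(a),f(b))_=$, and $(f(a),f(b))_= = [k]$ would mean $f(a)=f(b)$, contradicting that $f$ could separate such pairs... except $f$ need not be injective. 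I expect the intended argument instead directly uses the bounding hypothesis together with the multiplicativity of $\mathcal{B}_f$ and $\mathcal{B}_f(\emptyset) = \emptyset$: if every $\mathcal{B}_f([k]\setminus\{i\})$ has size $\geq k-1$ and their total intersection is $\emptyset$, a counting/inclusion argument forces each to have size exactly $k-1$ and omit a distinct element, which is precisely permutationality. Pinning down that last counting step cleanly is where the real work lies.
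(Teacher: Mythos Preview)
Your approach is correct and essentially the same as the paper's: both argue by contraposition, assuming $|\mathcal{B}_f([k]\setminus\{i\})|\geq k-1$ for all $i$, and derive that $f$ is permutational via the intersection formula $\mathcal{B}_f(\emptyset)=\bigcap_i \mathcal{B}_f([k]\setminus\{i\})$ from Lemma~\ref{lem:cap}.

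The ``main obstacle'' you worried about (properness) is exactly what the paper addresses first, and the counting argument you sketch does work. The paper packages it slightly differently by proving an auxiliary claim: $|\mathcal{B}_f([k]\setminus\{j_1,j_2\})|\leq k-2$ for all distinct $j_1,j_2$. This follows because if some such set had size $\geq k-1$, then intersecting it with the remaining $k-2$ sets $\mathcal{B}_f([k]\setminus\{i\})$ (each of size $\geq k-1$, hence each removing at most one element) leaves at least $(k-1)-(k-2)=1$ element in $\mathcal{B}_f(\emptyset)$, contradicting the hypothesis. This single claim then kills both of your concerns at once: if $\mathcal{B}_f([k]\setminus\{j\})=[k]$, pick any $i\neq j$ and note $\mathcal{B}_f([k]\setminus\{i,j\})=\mathcal{B}_f([k]\setminus\{i\})$ has size $\geq k-1$, contradiction; and if $\pi(i)=\pi(j)$ for $i\neq j$, then $\mathcal{B}_f([k]\setminus\{i,j\})=[k]\setminus\{\pi(i)\}$ has size $k-1$, again a contradiction. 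Your direct counting argument (the $k-1$ remaining sets can omit at most $k-1$ points, so cannot have empty intersection in $[k]$) is the same content, just not isolated as a separate step.

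One minor correction: your bookkeeping at the end is right in spirit but note that the displayed formula in Lemma~\ref{lem:cap} should read $\mathcal{B}_f(I)=\bigcap_{i\notin I}\mathcal{B}_f([k]\setminus\{i\})$ (since $I=\bigcap_{i\notin I}([k]\setminus\{i\})$). With $\mathcal{B}_f([k]\setminus\{i\})=[k]\setminus\{\sigma(i)\}$ and $\sigma$ a permutation, this gives $\mathcal{B}_f(I)=[k]\setminus\sigma([k]\setminus I)=\sigma(I)$ directly, so there is no need to introduce a second permutation.
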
 
\begin{proof}
We first show that 
$|{\mathcal B}_f([k] \setminus \{j_1,j_2\})| \leq k-2$ for all distinct $j_1,j_2 \in [k]$. 
Suppose for contradiction that $|{\mathcal B}_f([k] \setminus \{j_1,j_2\})| \geq k-1$ for some distinct $j_1,j_2 \in [k]$. 
Then 
$${\mathcal B}_f(\emptyset) = {\mathcal B}_f([k] \setminus \{j_1,j_2\}) \cap \bigcap_{i \in [k] \setminus \{j_1,j_2\}} {\mathcal B}_f([k] \setminus \{i\})$$
and hence 
by assumption we have 
$|{\mathcal B}_f(\emptyset) | \geq (k - 1) - (k - 2)  = 1$, a contradiction to the assumption that ${\mathcal B}_f(\emptyset) = \emptyset$. 

If there exists $j \in [k]$ with ${\mathcal B}_f([k] \setminus \{j\}) = [k]$, 
then pick any $i \in [k] \setminus \{j\}$, and note that
$|{\mathcal B}_f([k] \setminus \{i,j\}) | \geq k-1$ by Lemma~\ref{lem:cap}, contrary to 
what we proved above. 
 Otherwise, ${\mathcal B}_f([k] \setminus \{i\}) = \{[k] \setminus \pi(i)\}$ for some $\pi \colon [k] \to [k]$. If $\pi$ is not a permutation, then there are distinct $i,j \in [k]$ such that $\pi(i) = \pi(j)$, so 
$|{\mathcal B}_f([k] \setminus \{i,j\} | \geq k-1$ by Lemma~\ref{lem:cap}, again in contradiction to what we proved above. 
If $\pi$ is a permutation, then $f$ is permutational. 
\end{proof}

\begin{lem}\label{lem:step}
Let $f \in \Can(\bJ^<(k))$
be such that ${\mathcal B}_f(\emptyset) = \emptyset$ and $|{\mathcal B}_f([k] \setminus \{i\})| \leq k-2$ for some $i \in [k]$. 
Then for all $a,b \in \qup{k}$ 
there exists $g \in \langle \{f \} \cup \Aut(\bJ(k)) \rangle$ such that
$g(a) = g(b)$ or $g(a) \cap g(b) = \emptyset$.   
\end{lem}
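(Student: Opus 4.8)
The plan is to iterate a single contraction step coming from the hypothesis $|{\mathcal B}_f([k]\setminus\{i\})|\le k-2$ for some fixed $i\in[k]$, and combine it with automorphisms of $\bJ(k)$ to bring an arbitrary pair $a,b$ into a position where applying $f$ once makes progress toward $g(a)=g(b)$ or $g(a)\cap g(b)=\emptyset$.

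First I would unpack what the hypothesis buys us concretely. By the definition of ${\mathcal B}_f$ and Corollary~\ref{cor:B}, $\Eq(f(E_{[k]\setminus\{i\}})) = E_{{\mathcal B}_f([k]\setminus\{i\})}$, and since $|{\mathcal B}_f([k]\setminus\{i\})|\le k-2$, there exist two indices $p\ne q$ with $p,q\notin{\mathcal B}_f([k]\setminus\{i\})$. Now if $c,d\in\qup{k}$ agree on all coordinates except the $i$-th, i.e.\ $(c,d)\in E_{[k]\setminus\{i\}}$, then $(f(c),f(d))\in\Eq(f(E_{[k]\setminus\{i\}}))=E_{{\mathcal B}_f([k]\setminus\{i\})}$, and Lemma~\ref{lem:L} gives $L(f(c),f(d))\subseteq{\mathcal B}_f(L(c,d))\subseteq{\mathcal B}_f([k]\setminus\{i\})$ since $L(c,d)\subseteq[k]\setminus\{i\}$. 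Thus $|f(c)\cap f(d)| = |L(f(c),f(d))| \le |{\mathcal B}_f([k]\setminus\{i\})|\le k-2$, i.e.\ a single $f$-step applied to a pair differing in at most one coordinate strictly decreases the size of the intersection below $k-1$ (in fact below the bound $k-2$), while Lemma~\ref{lem:L} also controls which coordinates survive.

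Next I would set up the induction. Given $a,b\in\qup{k}$ with $m:=|a\cap b|$, I want to produce $g\in\langle\{f\}\cup\Aut(\bJ(k))\rangle$ reducing $m$; iterating, $m$ eventually reaches $0$ (disjoint) unless along the way we land in a situation forcing $g(a)=g(b)$. The key move: using $\Aut(\bJ(k))$ (which, by Corollary~\ref{cor:interdef2}, is the setwise $\Sym({\mathbb N})$-action and acts highly transitively on configurations with prescribed intersection pattern), I can pre-compose so that $a,b$ lie in the orbit of a pair $(c,d)$ that differs in exactly one coordinate from a common ``pivot'' — more precisely, I factor the passage from $a$ to $b$ through a chain $a=a^0,a^1,\dots,a^t=b$ where consecutive terms differ in a single coordinate (this is exactly the kind of chain used in the proof of Lemma~\ref{lem:eq}), arrange via automorphisms that the single differing coordinate each time is the $i$-th, apply $f$, and use the coordinate-level control from Lemma~\ref{lem:L} together with $p,q\notin{\mathcal B}_f([k]\setminus\{i\})$ to guarantee that each application kills at least one element of the intersection. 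Since ${\mathcal B}_f(\emptyset)=\emptyset$, Corollary~\ref{cor:S0} ensures $f$ preserves disjointness, so once disjoint we stay disjoint and can stop; and the only obstruction to continuing a strict decrease is $f(a^{j-1})=f(a^j)$, which is precisely the $g(a)=g(b)$ outcome after reassembling the chain.

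The main obstacle I anticipate is the bookkeeping in the chain argument: applying an automorphism to make ``the differing coordinate'' equal to $i$ is not free, because after one $f$-step the images $f(a^{j-1}),f(a^j)$ are in some orbit we only partially understand, and I must re-normalize before the next step without losing the gains already made — i.e.\ I need to check that the automorphisms used to realign do not re-inflate the intersection (they cannot, since automorphisms preserve $|{\cdot}\cap{\cdot}|$) and that canonicity of $f$ together with Lemma~\ref{lem:L} still applies at the next stage. Handling the degenerate cases ($m$ already $0$, or $a=b$, or the chain of length $0$) is routine. The compactness/local-generation wrapper is standard and I would invoke it only at the very end, exactly as in Corollary~\ref{cor:step-down}, to pass from ``every finite restriction can be handled'' to an element of the closure if needed — though as stated the lemma only asks for $g\in\langle\{f\}\cup\Aut(\bJ(k))\rangle$ for each fixed pair $a,b$, so no closure is even required here.
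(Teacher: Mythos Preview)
Your first paragraph is fine: for a pair $(c,d)$ differing only in coordinate $i$ you correctly get $|f(c)\cap f(d)|\le k-2$. But this only handles the passage from intersection size $k-1$ to something $\le k-2$, and the chain mechanism you propose for smaller $m$ does not work. A chain $a=a^0,\dots,a^t=b$ in which \emph{every} consecutive pair differs only in the $i$-th coordinate forces all $a^j$ to agree on $[k]\setminus\{i\}$, so the endpoints themselves satisfy $|a\cap b|\ge k-1$; you therefore cannot use such a chain when $m<k-1$. And even if you drop that requirement and merely apply $f$ to a chain whose links differ in various single coordinates, bounding each $|f(a^{j-1})\cap f(a^j)|$ says nothing about $|f(a)\cap f(b)|$.

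What is actually needed for the reduction from intersection size $m$ to $m-1$ is a set $I\subseteq[k]$ with $|I|=m$ and $|{\mathcal B}_f(I)|\le m-1$: then, using $\Aut(\bJ(k))$ to move your pair to some $(c,d)$ with $(c,d)_= = I$ and $L(c,d)=(c,d)_=$ (an interleaved pair), Lemma~\ref{lem:=} together with Lemma~\ref{lem:L} gives $|f(c)\cap f(d)|=|{\mathcal B}_f(I)|\le m-1$. The hypothesis only supplies such an $I$ for $m=k-1$. The paper manufactures one for every $m$ by first building a sequence $i_1,\dots,i_{k-1}$ with $\big|\bigcap_{j\le\ell}{\mathcal B}_f([k]\setminus\{i_j\})\big|\le k-\ell-1$, and this construction uses ${\mathcal B}_f(\emptyset)=\emptyset$ in an essential way (at each stage one picks $i_{\ell+1}$ so that a surviving element $s$ is killed, which is possible precisely because $\bigcap_j {\mathcal B}_f([k]\setminus\{j\})={\mathcal B}_f(\emptyset)=\emptyset$). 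In your plan, ${\mathcal B}_f(\emptyset)=\emptyset$ is invoked only via Corollary~\ref{cor:S0} to say disjoint pairs stay disjoint; that is not enough. So the missing idea is exactly this sequence (equivalently: showing that for every $m\in\{1,\dots,k-1\}$ there exists $I$ of size $m$ with $|{\mathcal B}_f(I)|\le m-1$), not the bookkeeping you flagged.
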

\begin{proof}
We first show that there exist
$i_1,\dots,i_{k-1} \in [k]$ such that
for all $\ell \in [k-1]$ we have $|\bigcap_{j \in [\ell]} {\mathcal B}_f([k] \setminus \{i_j\})| \leq k-\ell-1$. 
We choose $i_1 := i$. Suppose inductively that we have already defined $i_1,\dots,i_\ell$ for $\ell \in [k-2]$ 
such that $|I| \leq k-\ell-1$ for $I := \bigcap_{j \in [\ell]} {\mathcal B}_f([k] \setminus \{i_j\})$. If $|I| \leq k-\ell-2$, then $i_{\ell+1}$ can be chosen to be any element of $[k]$, and we
get that 
$$|\bigcap_{j \in [\ell+1}] {\mathcal B}_f([k] \setminus \{i_j\})| = | I \cap {\mathcal B}_f([k] \setminus \{i_{\ell+1}\}) | \leq k - \ell - 2 = k - (\ell+1) - 1.$$ 
Otherwise, there exists some $s \in I$.
Observe that 
\begin{align*}
    I\cap \bigcap_{j \in [k]\setminus \{i_1,\dots,i_{\ell}\}} {\mathcal B}_f([k] \setminus \{j\}) & =\bigcap_{j \in [k]} {\mathcal B}_f([k] \setminus \{j\}) \\
    & =
    \mathcal {\mathcal B}_f(\emptyset) && \text{(by~\eqref{eq:cap1})} \\
    & =\emptyset && \text{(by assumption).}   
\end{align*}
Thus, we can choose $i_{\ell+1}$ to be such that $s \notin {\mathcal B}_f([k] \setminus \{i_{\ell+1}\})$.
Then 
$$|\bigcap_{j \in [\ell+1}] {\mathcal B}_f([k] \setminus \{i_j\})| = |I \cap {\mathcal B}_f([k] \setminus \{i_{\ell+1}\})| \leq (k-\ell-1) - 1 = k - (\ell + 1) - 1$$
 as well. 

Let $a,b \in \qup{k}$. If there exists 
$g \in \langle \{f \} \cup \Aut(\bJ(k)) \rangle$
such that $g(a) = g(b)$ then we are done. So we suppose that $g(a) \neq g(b)$ for all $g \in \langle \{f \} \cup \Aut(\bJ(k)) \rangle$. Choose $g \in  \langle \{f \} \cup \Aut(\bJ(k)) \rangle$ 
such that $\ell := |g(a) \cap g(b)|$ is minimal. 
Suppose for contradiction that $\ell \geq 1$. 
Choose 
$c_1,\dots,c_k,d_1,\dots,d_k \in {\mathbb Q}$ such that 
such that 
$$c_1 \leq d_1 < c_2 \leq d_2 < \cdots < c_k \leq d_k$$
and $c_i < d_i$ if and only if $i \in \{i_1,\dots,i_{k-\ell} \}$. Note that in this case
$|c \cap d| = \ell$. 
Hence,  there exists $\alpha \in \Aut(\bJ(k))$
such that $(c,d) = \alpha g(a,b)$. 
By construction, $|f(c) \cap f(d)| = |(f(c),f(d))_=|$, and 
$(f(c),f(d))_= = {\mathcal B}_f((a,b)_=)$ by Lemma~\ref{lem:=}. 
Moreover, 
\begin{align*}
{\mathcal B}_f((a,b)_=) & = 
|\bigcap_{j \in [k-\ell]} {\mathcal B}_f([k] \setminus \{i_j\})| && \text{(Lemma~\ref{lem:cap})} \\
& \leq k - (k-\ell) - 1 = \ell - 1. 
\end{align*}
This implies that $|f(c) \cap f(d)| \leq \ell -1$, contradicting the minimality of $\ell$. 
Hence, there exists $g \in  \langle \{f \} \cup \Aut(\bJ(k)) \rangle$ with $g(a) \cap g(b) = \emptyset$, which concludes the proof. 
\end{proof}

The combination of the previous two lemmata implies the following. 

\begin{cor}\label{cor:non-perm}
Let $f \in \Can(\bJ^<(k))$
be such that ${\mathcal B}_f(\emptyset) = \emptyset$ and $f$ is not permutational. 
Then for all $a,b\in \qup{k}$ there exists $g \in \langle \{f \} \cup \Aut(\bJ(k)) \rangle$ such that
\begin{align*}
& g(a) = g(b) \\
\text{ or }  & g(a) \cap g(b) = \emptyset. 
\end{align*}
\end{cor}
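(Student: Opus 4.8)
The plan is to chain the two preceding lemmata with essentially no extra work. By hypothesis ${\mathcal B}_f(\emptyset)=\emptyset$ and $f$ is not permutational, so Lemma~\ref{lem:permutational} applies and produces an index $i\in[k]$ with $|{\mathcal B}_f([k]\setminus\{i\})|\leq k-2$. Together with the standing assumption ${\mathcal B}_f(\emptyset)=\emptyset$, this is exactly the list of hypotheses of Lemma~\ref{lem:step}, and the conclusion of Lemma~\ref{lem:step} is precisely the assertion to be proved: for all $a,b\in\qup{k}$ there is $g\in\langle\{f\}\cup\Aut(\bJ(k))\rangle$ with $g(a)=g(b)$ or $g(a)\cap g(b)=\emptyset$. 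So the corollary is a bookkeeping statement recording that, in the non-permutational case with ${\mathcal B}_f(\emptyset)=\emptyset$, any finite configuration of pairs can be collapsed onto a disjoint family, and there is no real obstacle to overcome.

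If one wants the sharper reading hinted at in the subsection introduction, namely a single operation $g$ in the closure $\overline{\langle\{f\}\cup\Aut(\bJ(k))\rangle}$ whose range is a family of pairwise disjoint (or equal) $k$-sets, the plan is to iterate. Since $\bJ(k)$ is $\omega$-categorical there are only finitely many orbits $O_1,\dots,O_N$ of pairs of $\Aut(\bJ(k))$ on $\qup{k}$. Call a pair $(a,b)$ \emph{good under $h$} if $h(a)=h(b)$ or $h(a)\cap h(b)=\emptyset$. Starting from $h_0=\id$, as long as some pair is not good under the current $h_m$, pick such a pair, apply Lemma~\ref{lem:step} to its $h_m$-image to obtain $e_m\in\langle\{f\}\cup\Aut(\bJ(k))\rangle$ making it good, and set $h_{m+1}:=e_m\circ h_m$. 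The crucial monotonicity is that every map in $\langle\{f\}\cup\Aut(\bJ(k))\rangle$ preserves $S_0$: for $f$ this is Corollary~\ref{cor:S0} (using ${\mathcal B}_f(\emptyset)=\emptyset$), and automorphisms of $\bJ(k)$ preserve $S_0$ trivially, as $S_0$ is a single orbit of pairs. Hence once a pair has disjoint images it keeps them under any further composition, and pairs with equal images stay equal; so the set of not-good pairs shrinks, and a standard compactness argument produces the desired $g$ in the closure.

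The step I would expect to need the most care in that sharper version is making the iteration orbit-aware: the maps $e_m$ delivered by Lemma~\ref{lem:step} need not be canonical with respect to $\bJ^<(k)$, so ``good under $h$'' is a priori a property of individual pairs, not of $\Aut(\bJ(k))$-orbits, and one has to arrange that each stage renders an entire orbit good --- for instance by exhausting the domain by finite pieces and diagonalising, or by re-canonising intermediate maps via Lemma~\ref{lem:canon} before each application of Lemma~\ref{lem:step}. For the corollary exactly as stated, none of this machinery is required: it is immediate from Lemma~\ref{lem:permutational} and Lemma~\ref{lem:step}.
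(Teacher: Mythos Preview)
Your proof is correct and matches the paper's approach exactly: the paper simply states that the corollary follows from the combination of Lemma~\ref{lem:permutational} and Lemma~\ref{lem:step}, which is precisely your first paragraph. Your additional discussion of the global version is not needed here (as you yourself note); the paper treats that separately as Lemma~\ref{lem:emb-disj-eq}.
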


\subsection{Canonical functions} 
\label{sect:canonical}
In this section we present an analysis of canonical functions  with respect to $\Aut(\bJ^<(k))$. 

\begin{thm}\label{thm:canonical}
Let $f \in \Can(\bJ^<(k))$. 
Then at least one of the following holds. 
\begin{enumerate}
\item $f \in \overline{\Aut(\bJ(k))}$. 
\item there exists a non-empty $F \subseteq {\mathbb Q}$ such that for  $a \in \qup{k}$ we have $F \subseteq f(a)$. 
\item 
$\{f\} \cup \Aut(\bJ(k))$ locally generates
an operation $g$ such that any two distinct elements in the image of $g$ are disjoint.
\end{enumerate} 
\end{thm}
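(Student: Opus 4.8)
The plan is a case analysis steered by the single set ${\mathcal B}_f(\emptyset) \subseteq [k]$, using the dictionary between the combinatorial action of $f$ and the lattice action ${\mathcal B}_f$ built up in the preceding subsections.

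First I would dispose of the case ${\mathcal B}_f(\emptyset) \neq \emptyset$, which I expect to give conclusion~(2) immediately. By the definition of ${\mathcal B}_f$ (Corollary~\ref{cor:B}) we have $f(E_{\emptyset}) \subseteq \Eq(f(E_{\emptyset})) = E_{{\mathcal B}_f(\emptyset)}$, and since $E_{\emptyset} = (\qup{k})^2$ this means that $f(a)_j = f(b)_j$ for all $a,b \in \qup{k}$ and all $j \in J := {\mathcal B}_f(\emptyset)$. Fixing any $a_0 \in \qup{k}$ and setting $C := \{\, f(a_0)_j \mid j \in J \,\}$ then yields a set of size $|J| \geq 1$ (the entries of an element of $\qup{k}$ are pairwise distinct) that is contained in $f(a)$ for every $a \in \qup{k}$, which is conclusion~(2).

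From now on ${\mathcal B}_f(\emptyset) = \emptyset$. If $f$ is not permutational, then Corollary~\ref{cor:non-perm} directly furnishes an operation $g$ locally generated by $\{f\} \cup \Aut(\bJ(k))$ with $g(a) = g(b)$ or $g(a) \cap g(b) = \emptyset$ for all $a,b$; equivalently, any two distinct elements of the image of $g$ are disjoint, which is conclusion~(3). So the only remaining case is that $f$ is permutational. Here Corollary~\ref{cor:S0} tells us that $f$ preserves $S_0$, and Lemma~\ref{lem:perm-emb} gives a dichotomy: either $f \in \End(\bJ(k))$, which is conclusion~(1), or $\{f\} \cup \Aut(\bJ(k))$ locally generates some non-permutational $g \in \Can(\bJ^<(k))$. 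In the latter subcase I would like to invoke Corollary~\ref{cor:non-perm} for $g$ and conclude~(3), and the one thing to verify is that ${\mathcal B}_g(\emptyset) = \emptyset$. This I would argue as follows: $f$ and every element of $\Aut(\bJ(k))$ preserve $S_0$, and preservation of $S_0$ is inherited by compositions and by pointwise limits, so the locally generated map $g$ also preserves $S_0$; hence for disjoint $a,b$ the sets $g(a),g(b)$ are disjoint, so $(g(a),g(b))_= = \emptyset$, and therefore ${\mathcal B}_g(\emptyset) = \emptyset$ by Lemma~\ref{lem:=}. Corollary~\ref{cor:non-perm} applied to $g$ then produces an operation locally generated by $\{g\} \cup \Aut(\bJ(k))$ — hence by $\{f\} \cup \Aut(\bJ(k))$, since the local closure of $\{f\} \cup \Aut(\bJ(k))$ is a closed monoid containing $g$ and $\Aut(\bJ(k))$ — all of whose distinct image elements are disjoint, i.e.\ conclusion~(3).

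I expect essentially all the real work to reside in the earlier sections: the translation between the action of a canonical $f$ on $k$-subsets and the lattice action ${\mathcal B}_f$ on subsets of $[k]$ (Lemmas~\ref{lem:cap}, \ref{lem:=}, \ref{lem:L}), together with Corollaries~\ref{cor:S0} and~\ref{cor:non-perm} and Lemma~\ref{lem:perm-emb}. Given those, the theorem is short bookkeeping, and the only point that needs a moment's thought is that the non-permutational map manufactured in the permutational branch may a priori fail ${\mathcal B}_g(\emptyset) = \emptyset$; the remedy is the observation above that $S_0$-preservation propagates through local generation.
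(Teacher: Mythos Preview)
Your overall architecture matches the paper's exactly: split on whether ${\mathcal B}_f(\emptyset)$ is empty, and in the empty case feed the permutational/non-permutational dichotomy into Corollary~\ref{cor:non-perm}. The handling of case~(2) and of the passage ``$g$ preserves $S_0$ hence ${\mathcal B}_g(\emptyset)=\emptyset$'' is fine and is precisely what the paper does in Lemma~\ref{lem:expl}.

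There is one genuine gap. You read Corollary~\ref{cor:non-perm} as producing a \emph{single} $g$ with $g(a)=g(b)$ or $g(a)\cap g(b)=\emptyset$ for \emph{all} $a,b$, i.e.\ conclusion~(3) outright. But the corollary, being just the concatenation of Lemmas~\ref{lem:permutational} and~\ref{lem:step}, is a pairwise statement: for each pair $(a,b)$ there is some $g_{a,b}\in \langle\{f\}\cup\Aut(\bJ(k))\rangle$ that works for that pair. The globalisation to a single $g$ is the content of Lemma~\ref{lem:emb-disj-eq}, which you never invoke and whose argument is missing from your sketch. The point is that every map in $\langle\{f\}\cup\Aut(\bJ(k))\rangle$ preserves both $S_0$ (by Corollary~\ref{cor:S0}) and $S_k$ (equality), so composing the $g_{a,b}$'s one after another never undoes earlier progress; an induction over an enumeration of a finite $F^2$ followed by a standard compactness argument then yields a single $g$ with $g((\qup{k})^2)\subseteq S_0\cup S_k$. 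You already have the key observation (propagation of $S_0$-preservation through composition and limits) in your text, but you use it only to check ${\mathcal B}_g(\emptyset)=\emptyset$, not to carry out the iteration. Insert this missing step and your proof is complete and coincides with the paper's.
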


The proof of this theorem rests on the following two lemmata. 

\begin{lem}\label{lem:expl}
Let $f \in \Can(\bJ^{<}(k)) \setminus \overline{\Aut(\bJ(k))}$ be such that ${\mathcal B}_f(\emptyset) = \emptyset$. 
Then for all $a,b \in \qup{k}$ there exists $g \in \overline{\langle \{f\} \cup \Aut(\bJ(k)) \rangle}$
 such that 
$g(a) = g(b)$ or $g(a) \cap g(b) = \emptyset$. 
\end{lem}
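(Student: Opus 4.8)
The plan is to do a case distinction based on whether $f$ is permutational or not, dispatch each case using results already established, and in the non-permutational case iterate the construction to pass from a single generated map satisfying the conclusion (from Corollary~\ref{cor:non-perm}) to a canonical one living in the topological closure. Since $f \notin \End(\bJ(k))$, Lemma~\ref{lem:perm-emb} tells us that if $f$ is permutational, then $\{f\} \cup \Aut(\bJ(k))$ locally generates a canonical function $g_0$ which is \emph{not} permutational; moreover $g_0$ still satisfies ${\mathcal B}_{g_0}(\emptyset) = \emptyset$ (this needs a one-line check: ${\mathcal B}_{g_0}(\emptyset)=\emptyset$ is equivalent to $g_0$ preserving $S_0$ by Corollary~\ref{cor:S0}, and preserving $S_0$ is inherited by elements of $\overline{\langle \{f\} \cup \Aut(\bJ(k)) \rangle}$ since $f$ itself preserves $S_0$ — which holds because ${\mathcal B}_f(\emptyset) = \emptyset$). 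So in all cases we may assume, after replacing $f$ by such a $g_0$, that $f \in \Can(\bJ^<(k))$ is non-permutational with ${\mathcal B}_f(\emptyset) = \emptyset$; note that replacing $f$ in this way is harmless since $\overline{\langle \{g_0\} \cup \Aut(\bJ(k)) \rangle} \subseteq \overline{\langle \{f\} \cup \Aut(\bJ(k)) \rangle}$.

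Now, with $f$ non-permutational and ${\mathcal B}_f(\emptyset) = \emptyset$, Corollary~\ref{cor:non-perm} gives, for every fixed pair $a, b \in \qup{k}$, an element $g \in \langle \{f\} \cup \Aut(\bJ(k)) \rangle$ with $g(a) = g(b)$ or $g(a) \cap g(b) = \emptyset$. This $g$ lies in $\langle \{f\} \cup \Aut(\bJ(k)) \rangle \subseteq \overline{\langle \{f\} \cup \Aut(\bJ(k)) \rangle}$, which is exactly what the statement asks for. So in the non-permutational case there is essentially nothing more to do beyond quoting Corollary~\ref{cor:non-perm}; the only subtlety is that Corollary~\ref{cor:non-perm} is stated for $\bJ^<(k)$-canonical $f$ with the hypotheses we have just arranged, so we must make sure the reduction in the previous paragraph preserves canonicity with respect to $\bJ^<(k)$ — which it does, since $g_0$ is produced as a canonical function by Lemma~\ref{lem:canon}/Lemma~\ref{lem:perm-emb}.

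The main obstacle — or rather the main point requiring care — is the bookkeeping around the hypothesis ${\mathcal B}_f(\emptyset) = \emptyset$ and the $f \notin \End(\bJ(k))$ hypothesis under the reduction $f \rightsquigarrow g_0$: we must verify that the non-permutational canonical map $g_0$ supplied by Lemma~\ref{lem:perm-emb} still satisfies ${\mathcal B}_{g_0}(\emptyset) = \emptyset$, since Corollary~\ref{cor:non-perm} requires this. As noted above, this follows because $S_0$ is preserved by $f$ (equivalent to ${\mathcal B}_f(\emptyset) = \emptyset$ via Corollary~\ref{cor:S0}), hence by every element of the monoid it generates together with $\Aut(\bJ(k))$, hence by $g_0$, and then ${\mathcal B}_{g_0}(\emptyset) = \emptyset$ again by Corollary~\ref{cor:S0} applied to $g_0$. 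Once this is in place, the two cases assemble into the statement, and no further combinatorial analysis of canonical behaviours is needed at this stage — the heavy lifting has already been done in Lemmata~\ref{lem:permutational}, \ref{lem:step}, and \ref{lem:perm-emb}.
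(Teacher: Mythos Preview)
Your proposal is correct and follows essentially the same route as the paper's proof: split on whether $f$ is permutational, in the permutational case invoke Lemma~\ref{lem:perm-emb} to pass to a non-permutational canonical map in $\overline{\langle \{f\}\cup\Aut(\bJ(k))\rangle}$, check that this map still has ${\mathcal B}(\emptyset)=\emptyset$ via preservation of $S_0$, and then apply Corollary~\ref{cor:non-perm}. One small remark: you cite Corollary~\ref{cor:S0} for the \emph{equivalence} between ${\mathcal B}_{g_0}(\emptyset)=\emptyset$ and preservation of $S_0$, but that corollary only states one implication; the converse (which is what you actually need for $g_0$) follows immediately from Lemma~\ref{lem:=} applied to any pair in $S_0$, and the paper makes the same tacit use of it.
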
 
\begin{proof}
If $f$ is not permutational, then 
the statement follows from Corollary~\ref{cor:non-perm}. 
Otherwise,  
 Lemma~\ref{lem:perm-emb} implies that there exists some $h \in \overline{\langle\{f\} \cup \Aut(\bJ(k))\rangle}\cap \Can(\bJ^{<}(k))$ such that $h$ is not permutational. Note that $h$ still preserves the relation $S_0$, and thus ${\mathcal B}_{h}(\emptyset) = \emptyset$. 
Then the statement of the lemma follows again from Corollary~\ref{cor:non-perm}.
\end{proof} 

Lemma~\ref{lem:expl} has the following global version. 

\begin{lem}\label{lem:emb-disj-eq}
Let $f \in \Can(\bJ^{<}(k)) \setminus \overline{\Aut(\bJ(k))}$ be such that ${\mathcal B}_f(\emptyset) = \emptyset$. 
Then $\{f\} \cup \Aut(\bJ(k))$ locally generates a function $g$ such that $g((\qup{k})^2) \subseteq S_0 \cup S_k$. 
\end{lem}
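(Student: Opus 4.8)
The plan is to upgrade the local statement of Lemma~\ref{lem:expl} to a single function by a standard compactness argument, combined with a canonisation step so that we stay inside $\Can(\bJ^<(k))$ at each stage. First I would fix an enumeration $(a^{(n)}, b^{(n)})_{n \in \mathbb{N}}$ of all pairs from $\qup{k}$ (there are only countably many, since $\qup{k}$ is countable). The goal is to build, for each finite initial segment, a function in $\langle \{f\} \cup \Aut(\bJ(k)) \rangle$ that simultaneously sends every $(a^{(i)}, b^{(i)})$ with $i \le n$ either to a pair of equal sets or to a pair of disjoint sets, i.e.\ into $S_0 \cup S_k$; then a König's lemma / compactness argument produces a limit $g$ in the closure with $g((\qup{k})^2) \subseteq S_0 \cup S_k$.

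The key technical point is that one application of Lemma~\ref{lem:expl} handles only \emph{one} pair $(a,b)$ at a time, and the function $g$ it produces need no longer be canonical, so one cannot naively iterate. To deal with this I would proceed inductively: suppose $h_n \in \langle \{f\} \cup \Aut(\bJ(k)) \rangle$ already maps $(a^{(i)}, b^{(i)})$ into $S_0 \cup S_k$ for all $i \le n$. Apply Lemma~\ref{lem:canon} (the canonisation lemma, valid since $\bJ^<(k)$ is Ramsey by Theorem~\ref{thm:ramsey}) to obtain $h_n' \in \overline{\langle \{h_n\} \cup \Aut(\bJ^<(k))\rangle}$ which is canonical with respect to $\bJ^<(k)$; since $S_0$ and $S_k$ are unions of orbits of pairs of $\Aut(\bJ(k))$ and $h_n$ maps the finitely many relevant pairs into $S_0 \cup S_k$, one checks $h_n'$ still does (here one uses that being in $S_0 \cup S_k$ for the images $(h_n(a^{(i)}), h_n(b^{(i)}))$ is an orbit condition preserved under the closure operations involved, or alternatively one re-chooses representatives). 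Crucially $h_n'$ still preserves $S_0$ — because $f$ does by Corollary~\ref{cor:S0} (as ${\mathcal B}_f(\emptyset) = \emptyset$), hence so does everything in $\langle \{f\} \cup \Aut(\bJ(k))\rangle$ and its closure, and composing with a canonical function cannot destroy this — so ${\mathcal B}_{h_n'}(\emptyset) = \emptyset$; moreover $h_n' \notin \End(\bJ(k))$ can be arranged or one observes that if it were an endomorphism we would already be done. Now apply Lemma~\ref{lem:expl} to $h_n'$ and the pair $(h_n'(a^{(n+1)}), h_n'(b^{(n+1)}))$ — more precisely to the pair $(a^{(n+1)}, b^{(n+1)})$ with the function $h_n'$ in place of $f$, noting that $h_n'$ satisfies the hypotheses of that lemma — to get $h_{n+1} \in \overline{\langle \{h_n'\} \cup \Aut(\bJ(k))\rangle} \subseteq \overline{\langle \{f\} \cup \Aut(\bJ(k))\rangle}$ with $h_{n+1}(a^{(n+1)})$ and $h_{n+1}(b^{(n+1)})$ equal or disjoint. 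Since the earlier pairs $(a^{(i)}, b^{(i)})$, $i \le n$, were already sent by $h_n'$ into $S_0 \cup S_k$ — a relation preserved by $\Aut(\bJ(k))$ — and $h_{n+1}$ agrees with $h_n'$ on a finite set up to automorphisms on the output side, those pairs stay in $S_0 \cup S_k$; to make this last point clean I would instead formulate the induction in terms of finite partial maps and restrict attention to maps that fix the finitely many relevant output sets pointwise, exactly as in the proof of Corollary~\ref{cor:step-down}.

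The main obstacle I anticipate is bookkeeping: ensuring that passing to a canonical function via Lemma~\ref{lem:canon} and then applying Lemma~\ref{lem:expl} does not undo the good behaviour already achieved on the earlier pairs. The cleanest way around this, which I would adopt, is the same device used in Corollary~\ref{cor:step-down}: work with the property "maps a given finite set $F \subseteq (\qup{k})^2$ into $S_0 \cup S_k$" and show by induction on $|F|$ that some element of $\langle \{f\} \cup \Aut(\bJ(k)) \rangle$ has it — at each step extending $F$ by one pair, composing with automorphisms to bring the configuration into a standard position, applying Lemma~\ref{lem:expl}, and re-canonising only when needed. A final standard compactness argument (identical in form to the one invoked in Corollary~\ref{cor:step-down}) then yields a single $g \in \overline{\langle \{f\} \cup \Aut(\bJ(k))\rangle}$ with $g((\qup{k})^2) \subseteq S_0 \cup S_k$, which is the claim.
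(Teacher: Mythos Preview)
Your overall architecture (handle pairs one at a time, show the finite statement by induction, then pass to a limit by compactness) is exactly what the paper does. But you have introduced an unnecessary canonisation step that both complicates the argument and creates a genuine gap. You apply Lemma~\ref{lem:expl} to the iterated and re-canonised function $h_n'$, which forces you to verify that $h_n' \in \Can(\bJ^<(k)) \setminus \End(\bJ(k))$ with ${\mathcal B}_{h_n'}(\emptyset)=\emptyset$; the case $h_n' \in \End(\bJ(k))$ is not handled (``we would already be done'' is not correct: an element of $\End(\bJ(k))$ preserves every $S_i$, so it certainly does not collapse pairs into $S_0 \cup S_k$), and your remarks about preserving earlier pairs through canonisation are also only sketched.

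The paper avoids all of this by applying Lemma~\ref{lem:expl} \emph{always to the original $f$}, never to the composed map. Concretely: given a finite set of pairs $(a_1,b_1),\dots,(a_n,b_n)$, choose $h_1$ from Lemma~\ref{lem:expl} for $f$ and the pair $(a_1,b_1)$; then choose $h_2$ from Lemma~\ref{lem:expl} for $f$ and the pair $h_1(a_2,b_2)$; and so on, with $h_i$ obtained for $f$ and the pair $h_{i-1}\circ\cdots\circ h_1(a_i,b_i)$. Nothing here requires the composite to be canonical, because the hypotheses of Lemma~\ref{lem:expl} are only needed for $f$, which satisfies them by assumption. The bookkeeping you are worried about is then a one-line observation: every $h_j \in \overline{\langle \{f\}\cup\Aut(\bJ(k))\rangle}$ preserves $S_0$ (since $f$ does by Corollary~\ref{cor:S0} and $\Aut(\bJ(k))$ does trivially) and preserves $S_k$ (equality), so once a pair has been pushed into $S_0\cup S_k$ by $h_i\circ\cdots\circ h_1$, it stays there under all later $h_j$. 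A final compactness argument gives the global $g$.
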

\begin{proof}
Let $F$ be a finite subset of $\qup{k}$. We have to show that there exists $h \in \langle \{f\} \cup \Aut(\bJ(k)) \rangle$ such that $h(F^2) \subseteq S_0 \cup S_k$. 
Let $(a_1,b_1),\dots,(a_n,b_n)$ be an enumeration of $F^2$. We show by induction on $i \in [n]$  that there exists $h_i \in \langle \{f\} \cup \Aut(\bJ(k)) \rangle$ such that 
$$h_i \circ \cdots \circ h_1(a_i,b_i) \in S_0 \cup S_k.$$ 
For $i = 1$, the statement is immediate from Lemma~\ref{lem:expl} applied to $(a,b) = (a_1,b_1)$. 
For $i \geq 2$, the statement follows by applying Lemma~\ref{lem:expl} to $(a,b) := h_{i-1} \circ \cdots \circ h_1(a_i,b_i)$. 

Then $h_n \circ \cdots \circ h_1$ maps $(a_1,b_1),\dots,(a_n,b_n)$ into $S_0 \cup S_k$, because $h_1,\dots,h_n$ preserve $S_0$ and $S_k$. The claim now follows from local closure and a standard compactness argument. 
\end{proof}

\begin{proof}[Proof of Theorem~\ref{thm:canonical}]
Suppose that $f \notin \End(\bJ^<(k))$. 
First consider the case that 
${\mathcal B}_f(\emptyset) \neq \emptyset$. 
Consider any $a \in \qup{k}$ and define $F := \{f(a)_i \mid i \in {\mathcal B}_f(\emptyset)\}$. Identifying the
elements of $\qup{k}$ with ${{\mathbb N} \choose k}$ as explained in Section~\ref{sect:RamseyExp}, we have that $F \subseteq f(b)$ for every $b \in \qup{k}$
by the canonicity of $f$, and we are done. 

Otherwise, ${\mathcal B}_f(\emptyset) = \emptyset$. 
Then $f$ preserves $S_0$ by Corollary~\ref{cor:S0}. 
By Lemma~\ref{lem:emb-disj-eq}, 
$\{f\} \cup \Aut(\bJ(k))$ 
locally generates some function $g$ such that $g((\qup{k})^2) \subseteq S_0 \cup S_k$. 
Since $g$ preserves $S_0$, the image of
$g$ is infinite, and consists of infinitely many pairwise disjoint sets. 
\end{proof} 

\subsection{Range-rigid functions}
Let $G$ be a permutation group on a set $A$.  
A function $g \colon A \to A$ is called \emph{range-rigid with respect to $G$} if
 for all $n \in \mathbb N$, all orbits of the componentwise action of $G$ on $A^n$ that have a non-empty intersection with $g(A)^n$ are preserved by $g$.
 We will use the following theorem from~\cite[proof of Theorem 4]{MottetPinskerCores}. 

\begin{thm}\label{thm:range-rigid}
Let $\bB$ be a first-order reduct of a countable $\omega$-categorical homogeneous Ramsey structure $\bA$ and let $\bC$ be the model-complete core of $\bB$. Then 
\begin{itemize}
\item there exists $f \in \End(\bB)$ which is range-rigid and canonical 
with respect to $\Aut(\bA)$
 such that the structure induced by the image of $f$ in $\bA$ has the same age as a 
homogeneous  Ramsey substructure $\bA'$ of $\bA$, and 
\item $\bC = \bB[A']$.  
\end{itemize} 
\end{thm}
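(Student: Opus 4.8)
The plan is to obtain $f$ as a suitably \emph{minimal} canonical endomorphism of $\bB$, and then to read off the structure it induces on its image; the Ramsey property of $\bA$ is used throughout, via the Canonisation Lemma and via a compactness argument.

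\emph{From the core to a canonical endomorphism.} By standard facts about model-complete cores (see~\cite{Cores-journal,BodHilsMartin-Journal,Book}), the $\omega$-categorical structure $\bB$ has an endomorphism $e_0$ with $\bB[e_0(B)]\cong\bC$; concretely, if $\phi\colon\bB\to\bC$ and $\psi\colon\bC\to\bB$ witness homomorphic equivalence, then $\phi\circ\psi$ is an endomorphism of the model-complete core $\bC$, hence preserves all first-order formulas, which forces $\psi$ to be an embedding, and a suitable composition of $\psi$ and $\phi$ yields such an $e_0$. Since $\bB$ is a first-order reduct of $\bA$ we have $\Aut(\bA)\subseteq\Aut(\bB)\subseteq\End(\bB)$ and $\End(\bB)$ is topologically closed, so applying the Canonisation Lemma (Lemma~\ref{lem:canon}) to $e_0$ produces $e_1\in\End(\bB)$ that is canonical with respect to $\bA$; as $e_1\in\End(\bB)$, the structure $\bB[e_1(B)]$ is still homomorphically equivalent to $\bB$, hence has model-complete core $\bC$.

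\emph{Making the endomorphism range-rigid.} For $g\in\End(\bB)\cap\Can(\bA)$ let $\mathcal O(g)$ be the set of all orbits of $\Aut(\bA)$, on tuples of all arities, meeting the corresponding power of $g(A)$. Composing an element of $\End(\bB)\cap\Can(\bA)$ with an endomorphism of $\bB$ and re-canonising via Lemma~\ref{lem:canon} stays inside $\End(\bB)\cap\Can(\bA)$ and can only shrink $\mathcal O$; so I would fix an inclusion-minimal value of $\mathcal O$. The canonical endomorphisms realising this minimal $\mathcal O$ are closed under composition, and their \emph{behaviours} (the induced maps on orbits), restricted to tuples of length $\le n$, form for each $n$ a finite semigroup, with the restriction maps being surjective semigroup homomorphisms; lifting an idempotent through this inverse system (a K\"onig-type compactness argument, which in the limit is realised by some $f\in\End(\bB)\cap\Can(\bA)$) gives an $f$ with minimal $\mathcal O$ whose behaviour is idempotent, i.e.\ $f\circ f$ has the same behaviour as $f$. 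Such an $f$ is range-rigid: any orbit $O$ meeting $f(A)^n$ contains a tuple $f(\bar c)$, and $f(f(\bar c))$ lies in the orbit of $f(\bar c)$ by idempotency, so $O$ is preserved, and canonicity gives $f(O)\subseteq O$. This $f$ is the function asserted in the statement.

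\emph{Identifying the image.} Let $\mathcal K:=\Age(\bA[f(B)])$. This is a hereditary, isomorphism-closed class with the joint embedding property, and — this is the point where range-rigidity of $f$ and the Ramsey property of $\bA$ are used essentially — one shows that $\mathcal K$ has the amalgamation property and inherits the Ramsey property, so its \Fresse{} limit $\bA'$ is a homogeneous Ramsey structure. Since $\Age(\bA')=\mathcal K\subseteq\Age(\bA)$ and $\bA$ is homogeneous, a one-sided back-and-forth argument embeds $\bA'$ into $\bA$; identify $\bA'$ with that substructure, with domain $A'$. Finally $\bB[A']\cong\bC$: the reduct $\bB[A']$ is $\omega$-categorical, and it is homomorphically equivalent to $\bB$ (via $\bB\xrightarrow{f}\bB[f(B)]$ and the embedding of $\bB[f(B)]$ into the homogeneous $\bB[A']$), so its model-complete core is $\bC$; minimality of $\mathcal O(f)$ together with range-rigidity forces $\bB[A']$ to be a model-complete core — otherwise one could run the construction again inside $\bA'$ and strictly decrease $\mathcal O$ — whence $\bB[A']\cong\bC$ by uniqueness of the model-complete core.

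\emph{Main obstacle.} The two genuinely substantial points are the compactness/inverse-limit argument of the second step (securing range-rigidity while keeping $\mathcal O$ minimal and without enlarging the image) and the verification in the third step that $\mathcal K=\Age(\bA[f(B)])$ has amalgamation and the Ramsey property and that $\bB[A']$ is exactly $\bC$. These are precisely the contents of~\cite{MottetPinskerCores} on which we rely, which is why we quote the statement rather than reproving it here.
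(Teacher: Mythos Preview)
The paper does not prove this theorem at all: it is quoted as an external result, with the explicit attribution ``the following theorem from~\cite[proof of Theorem 4]{MottetPinskerCores}'', and is used as a black box in the subsequent Lemma~\ref{lem:use-rr} and in Theorem~\ref{thm:main}. So there is no ``paper's own proof'' to compare against.

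Your sketch is a faithful outline of the argument in~\cite{MottetPinskerCores}: canonise an endomorphism hitting the core, pass to a minimal $\mathcal O$, extract an idempotent behaviour by a K\"onig/inverse-limit argument to obtain range-rigidity, and then analyse the age of the image. You are also right that the two genuinely non-trivial steps---the compactness step producing a range-rigid canonical $f$ with minimal $\mathcal O$, and the verification that $\Age(\bA[f(B)])$ is an amalgamation class with the Ramsey property so that $\bB[A']$ is itself a model-complete core---are exactly the substance of~\cite{MottetPinskerCores}. Since both you and the paper ultimately defer to that reference, your proposal is appropriate; just be aware that in the present paper the statement is cited, not proved, so a self-contained write-up would still need to fill in those two steps in full.
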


We use this to prove the following lemma, which is useful for classifying model-complete cores of first-order reducts of $\omega$-categorical homogeneous Ramsey structures. 

\begin{lem}\label{lem:use-rr}
Let $\bB$ be a first-order reduct of a countable $\omega$-categorical homogeneous Ramsey structure $\bA$. Then exactly one of the following holds.
\begin{itemize}
\item $\bB$ is a model-complete core. 
\item There exists an $f \in \End(\bB) \setminus \End(\bA)$ which is 
range-rigid and 
canonical with respect to $\Aut(\bA)$. 
\end{itemize}
\end{lem}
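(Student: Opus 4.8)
The two alternatives are mutually exclusive: if $\bB$ is a model-complete core then every endomorphism of $\bB$ preserves all first-order formulas, hence in particular all relations of $\bA$ — so no $f \in \End(\bB) \setminus \End(\bA)$ exists at all, range-rigid or not. The substance of the lemma is therefore the ``at least one'' direction, which I would obtain directly from Theorem~\ref{thm:range-rigid}. Apply that theorem to $\bB$ and $\bA$: it produces an $f \in \End(\bB)$ that is range-rigid and canonical with respect to $\Aut(\bA)$, whose image induces (in $\bA$) a structure with the same age as a homogeneous Ramsey substructure $\bA'$ of $\bA$, and moreover the model-complete core $\bC$ of $\bB$ equals $\bB[A']$.

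The plan is then a case distinction on this $f$. If $f \notin \End(\bA)$, we are immediately in the second bullet and done. So suppose $f \in \End(\bA)$. Since $\bA$ is $\omega$-categorical and homogeneous, $\End(\bA) = \overline{\Aut(\bA)}$, so $f$ is a limit of automorphisms of $\bA$; in particular $f$ is an embedding of $\bA$ into $\bA$ and the substructure $\bA[f(A)]$ is isomorphic to $\bA$ itself, hence has the same age as $\bA$. Combined with the first bullet of Theorem~\ref{thm:range-rigid}, the age of $\bA'$ equals the age of $\bA$, and since both $\bA'$ and $\bA$ are homogeneous with the same (relational, $\omega$-categorical) signature, $\bA' \cong \bA$ and we may take $A' = A$. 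Then $\bC = \bB[A'] = \bB$, i.e.\ $\bB$ is (isomorphic to, hence equal to) its own model-complete core. This puts us in the first bullet.

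The one point that needs a little care — and which I expect to be the main obstacle — is the step ``$f \in \End(\bA)$ implies $\bA' \cong \bA$.'' One has to be sure that ``same age'' together with homogeneity and a relational signature really forces isomorphism of the countable structures: this is the standard \Fresse{} uniqueness argument, but it requires $\bA'$ to be a genuine substructure of $\bA$ of the same cardinality (countably infinite), not merely a structure with the same age. Theorem~\ref{thm:range-rigid} gives $\bA'$ as a substructure of $\bA$, and $\bA$ is countably infinite, so provided $\bA'$ is infinite we are fine; if $\bA$ were finite the lemma is trivial since then $\End(\bA) = \Aut(\bA)$ and every endomorphism of $\bB$ is already a self-embedding. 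So the only genuine check is that when $f \in \End(\bA)$ the substructure $\bA[f(A)]$ — equivalently $\bA'$ — is infinite, which is immediate from $f$ being an embedding of the infinite structure $\bA$. With that settled, the case analysis closes and the lemma follows.
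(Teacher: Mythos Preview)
Your ``at least one'' argument is essentially the paper's: apply Theorem~\ref{thm:range-rigid}, case-split on the resulting $f$, and when $f$ is a self-embedding of $\bA$ use \Fresse\ uniqueness to get $\bA'\cong\bA$ and hence $\bC\cong\bB$. One slip: the identity $\End(\bA)=\overline{\Aut(\bA)}$ fails for general $\omega$-categorical homogeneous $\bA$; what holds is $\Emb(\bA)=\overline{\Aut(\bA)}$. The paper sidesteps this by casing directly on whether $f$ is a self-embedding of $\bA$ rather than on membership in $\End(\bA)$.

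Your mutual-exclusivity argument, however, has a genuine gap. From ``$\bB$ is a model-complete core'' you get only that endomorphisms of $\bB$ preserve first-order formulas \emph{in the language of $\bB$}; the relations of $\bA$ need not be definable in $\bB$, so nothing forces them to be preserved. Indeed, the two items are not mutually exclusive in general: take $\bA=(\mathbb{Q};<,P)$ with $P$ dense and codense (a homogeneous Ramsey structure) and $\bB=(\mathbb{Q};<)$, which is a model-complete core. Any order-isomorphism $g$ from $\mathbb{Q}$ onto $\mathbb{Q}\setminus P$ lies in $\End(\bB)\setminus\End(\bA)$, is canonical with respect to $\Aut(\bA)$, and is range-rigid (its range lies inside $\neg P$, and $g$ preserves every $\Aut(\bA)$-orbit of tuples with all entries in $\neg P$). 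So both bullets hold at once. The paper's own proof does not address mutual exclusivity, and the downstream applications use only the disjunction.
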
 
\begin{proof}
Let $\bC$ be the model-complete core of $\bB$. 
Let $f \in \End(\bB)$ be the map from Theorem~\ref{thm:range-rigid} 
which is such that the image of $f$ has the same age as the homogeneous substructure $\bA'$ of $\bA$. 
If $f$ is not an self-embedding of $\bA$ then we are done. Otherwise, $\Age(\bA') = \Age(\bA)$ and hence the countable homogeneous structures $\bA'$ and $\bA$ are isomorphic. Since $\bC=\bB[A']$ Theorem~\ref{thm:range-rigid}, 
we have that $\bC$ is isomorphic to $\bB$, and we are done also in this case. 
\end{proof} 

The next lemma illustrates the use of range-rigidity.

\begin{lem}\label{lem:emb}
Let $f \in \overline{\Aut(\bJ(k))}$ be canonical and range-rigid with respect to $\Aut(\bJ^<(k))$. Then $f \in \End(\bJ^<(k))$. 
\end{lem}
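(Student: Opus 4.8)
The plan is to show that $f$, being a canonical and range-rigid self-map of $({{\mathbb N} \choose k};S_0,S_1)$, in fact preserves all the relations $S_i$ and hence lies in $\End(\bJ^<(k))$ (equivalently $\End(\bJ(k))$, since $\bJ^<(k)$ is a first-order expansion of $\bJ(k)$ by linear-order data that canonicity already respects). The key point is that since $f \in \End({{\mathbb N} \choose k};S_0,S_1)$ it preserves $S_0$ and $S_1$, so in particular ${\mathcal B}_f(\emptyset) = \emptyset$: if some $i \in {\mathcal B}_f(\emptyset)$, then choosing $a,b$ with $(a,b) \in S_0$ would force $L(f(a),f(b)) \supseteq \{i\}$ by the contrapositive of the argument in Lemma~\ref{lem:L}, contradicting that $f$ preserves $S_0$; more directly, $f(E_\emptyset) \subseteq E_{{\mathcal B}_f(\emptyset)}$ and $E_\emptyset = (\qup{k})^2$ contains pairs in $S_0$, whose images must stay in $S_0$, so ${\mathcal B}_f(\emptyset)=\emptyset$.

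Next I would invoke Theorem~\ref{thm:canonical}: since ${\mathcal B}_f(\emptyset) = \emptyset$, case (2) is impossible (it would require a common nonempty core $C \subseteq f(a)$ for all $a$, which is incompatible with preserving $S_0$), so either $f \in \End(\bJ(k))$ — in which case we are done, as then $f$ also preserves the order relations $<_{ij}$ by canonicity and hence $f \in \End(\bJ^<(k))$ — or $\{f\} \cup \Aut(\bJ(k))$ locally generates an operation $g$ whose image consists of pairwise disjoint sets. The plan is to rule out this last alternative using range-rigidity. Indeed, $f$ preserves $S_1$, which is a single orbit of pairs meeting $f({{\mathbb N}\choose k})^2$ (pick $a$, then $b$ with $|a\cap b|=1$ both in the image — but here is where care is needed: we need a pair in the image of $f$ lying in $S_1$). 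Since $f$ preserves $S_1$, for any $(a,b)\in S_1$ we have $(f(a),f(b)) \in S_1$, so the orbit $S_1$ does have nonempty intersection with $f({{\mathbb N}\choose k})^2$; range-rigidity then says $f$ \emph{preserves} the relation $S_1$ — which we already knew — but more importantly, range-rigidity applied along the composition generating $g$ shows $g$ must also preserve $S_1$, since $S_1$ meets the image. But $g$ has image consisting of pairwise disjoint $k$-sets, so $g({{\mathbb N}\choose k})^2 \cap S_1 = \emptyset$ when $k \geq 1$, contradicting that $g$ preserves $S_1$ (as $S_1$ is nonempty and its $g$-image would have to be a nonempty subset of $S_1$ lying inside $g({{\mathbb N}\choose k})^2$).

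Having eliminated case (3), we are left with $f \in \End(\bJ(k))$. To upgrade this to $f \in \End(\bJ^<(k))$, I would note that $f$ is canonical with respect to $\Aut(\bJ^<(k))$, so the orbit of $(f(a),f(b))$ depends only on the orbit of $(a,b)$; since $f$ preserves $E = S_{k-1}$ (and all $S_i$, which follows from preserving $S_0,S_1$ by Lemma~\ref{lem:Si} together with $f$ preserving $E$ — or directly from $f\in\End(\bJ(k))$) and since the orbits of $\Aut(\bJ^<(k))$ refine those of $\Aut(\bJ(k))$ only by the linear-order relations $<_{ij}$, one checks that a canonical endomorphism of $\bJ(k)$ that preserves all the $S_i$ automatically preserves each $<_{ij}$: the behaviour of $f$ on a pair $(a,b)$ with $a_i < b_j$ versus $b_j < a_i$ is recorded by the orbit, and since $f$ moves each coordinate monotonically (being canonical on $\qup{k}$, whose orbit structure includes the ordering of coordinates across the two tuples), $f$ respects $<_{ij}$.

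The main obstacle I expect is the range-rigidity argument ruling out case (3): one must argue carefully that the locally generated $g$ inherits preservation of $S_1$ from $f$ via range-rigidity along the generating composition, and that this genuinely contradicts the image of $g$ consisting of pairwise disjoint sets — in particular verifying that $S_1$ really does intersect the relevant image powers so that range-rigidity bites. A secondary subtlety is the final bookkeeping that $\End(\bJ(k)) \cap \Can(\bJ^<(k)) \subseteq \End(\bJ^<(k))$, which hinges on understanding precisely how the order relations $<_{ij}$ are encoded in $\Aut(\bJ^<(k))$-orbits versus $\Aut(\bJ(k))$-orbits; I would handle this by an explicit inspection of orbits of pairs rather than anything deep.
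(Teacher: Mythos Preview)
Your route through Theorem~\ref{thm:canonical} is different from the paper's, which is much shorter: the paper uses the topological monoid isomorphism $\xi\colon \End({{\mathbb N} \choose k};S_0,S_1) \to \End({\mathbb Q};\neq)$ from Corollary~\ref{cor:j-is-mccore}; canonicity of $f$ with respect to $\bJ^<(k)$ transfers to canonicity of $\xi(f)$ with respect to $({\mathbb Q};<)$, so $\xi(f)$ is order-preserving or order-reversing, and range-rigidity rules out the latter in one line.

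In your argument, cases~(2) and~(3) of Theorem~\ref{thm:canonical} are indeed excluded, but you misplace range-rigidity. In case~(3) you do not need it at all: since $f$ and $\Aut(\bJ(k))$ both preserve $S_1$, so does anything they locally generate, in particular $g$; and a map whose distinct images are pairwise disjoint cannot preserve the nonempty relation $S_1$.

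The genuine gap is your upgrade step. The inclusion $\End(\bJ(k)) \cap \Can(\bJ^<(k)) \subseteq \End(\bJ^<(k))$ that you claim is \emph{false}. The map on ${\mathbb Q \choose k}$ induced by the order-reversing bijection $x\mapsto -x$ of $\mathbb Q$ is an automorphism of $\bJ(k)$, is canonical with respect to $\bJ^<(k)$, and lies in $\End({{\mathbb N} \choose k};S_0,S_1)$, yet it does not preserve $<_{11}$: for $k=2$ take $a=(0,1)$, $b=(2,3)$, so $a<_{11}b$, while the images $(-1,0)$ and $(-3,-2)$ satisfy $\neg(<_{11})$. Range-rigidity is precisely what excludes this behaviour, and \emph{this} is where you must spend it, not in case~(3). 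To finish along your lines you would need to show that every $\Aut(\bJ^<(k))$-orbit of pairs meets $f(V)^2$, so that range-rigidity forces $f$ to preserve each orbit and hence each $<_{ij}$; but establishing that surjectivity on orbits essentially amounts to the order-preserving/order-reversing dichotomy that the paper's bi-interpretation argument delivers directly.
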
 
\begin{proof}
Let $\xi$ be the isomorphism between $\End({{\mathbb N} \choose k};S_0,S_1)=\overline{\Aut(\bJ(k))}$
and $\End({\mathbb Q};\neq)$ from Corollary~\ref{cor:j-is-mccore} (stated there for 
${\mathbb N}$ instead of 
${\mathbb Q}$, which is clearly irrelevant). Since $f$ is canonical with respect to $\bJ^<(k)$, we have that $\xi(f)$ is canonical with respect to $({\mathbb Q};<)$, and hence it is either order preserving or order reversing. In the first case, $f \in \End(\bJ^<(k))$; the second case is impossible by the assumption that $f$ is range-rigid with respect to $\bJ^<(k)$. 
\end{proof} 

\subsection{Model-complete Core Classification} 
The following is the main result of this section.

\begin{thm}\label{thm:main}
For $k \in {\mathbb N}$, let $\bB$ be a first-order reduct of $\bJ(k)$ and let $\bC$ be the model-complete core of $\bB$. 
Then $\bC$ is first-order bi-definable with $\bJ(\ell)$, for some $\ell \leq k$.
\end{thm}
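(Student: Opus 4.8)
The plan is to argue by induction on $k$. For the base cases $k\le 1$: the structure $\bJ(0)$ is a one-element structure and hence its own model-complete core, and $\bJ(1)$ is, after identifying ${{\mathbb N}\choose 1}$ with ${\mathbb N}$, the structure $({\mathbb N};\neq)$; since $\neq$ is first-order definable from $=$, the first-order reducts of $\bJ(1)$ are exactly the first-order reducts of $({\mathbb N};=)$, and I would invoke the classification of the model-complete cores of equality constraint languages \cite{Book}: up to bi-definability these cores are precisely $\bJ(0)$ and $\bJ(1)$. This settles $k\le 1$.

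For the inductive step, fix $k\ge 2$, let $\bB$ be a first-order reduct of $\bJ(k)$ and let $\bC$ be its model-complete core. By Theorem~\ref{thm:ramsey} the structure $\bJ^<(k)$ is a countable $\omega$-categorical homogeneous Ramsey structure of which $\bJ(k)$, and therefore also $\bB$, is a first-order reduct, so I would apply Lemma~\ref{lem:use-rr} with $\bA=\bJ^<(k)$: either $\bB$ is already a model-complete core, or there is $f\in\End(\bB)\setminus\End(\bJ^<(k))$ that is range-rigid and canonical with respect to $\Aut(\bJ^<(k))$. In the first case $\bC=\bB$, and since $\Aut(\bB)\supseteq\Aut(\bJ(k))$, Theorem~\ref{thm:fo-reducts} leaves only two possibilities: if $\Aut(\bB)=\Aut(\bJ(k))$ then $\bB$ is interdefinable with $\bJ(k)$ and we are done with $\ell=k$; if $\Aut(\bB)=\Sym(V)$ then $\bB$ is interdefinable with $(V;=)\cong({\mathbb N};=)$, hence isomorphic to a first-order reduct of $\bJ(1)$, and the base case applies. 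So from now on I would work with such an $f$, noting that, since $\End(\bB)$ is a topologically closed monoid containing $\Aut(\bJ(k))\supseteq\Aut(\bJ^<(k))$, it contains every operation locally generated by $f$ together with $\Aut(\bJ(k))$. I would then apply Theorem~\ref{thm:canonical} to $f$ and split on whether ${\mathcal B}_f(\emptyset)=\emptyset$.

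Suppose first ${\mathcal B}_f(\emptyset)\neq\emptyset$. By the second alternative of Theorem~\ref{thm:canonical} there is a nonempty $C\subseteq{\mathbb Q}$ with $C\subseteq f(a)$ for all $a\in\qup{k}$; put $m:=|C|$, so $1\le m\le k$, and $U:=\{S\in{{\mathbb N}\choose k}\mid C\subseteq S\}$, which contains the image of $f$. Then $f$ and the inclusion $U\hookrightarrow V$ witness that $\bB$ and $\bB[U]$ are homomorphically equivalent, so they have the same model-complete core $\bC$. We may assume $\Aut(\bB)=\Aut(\bJ(k))$ (otherwise, as above, $\bB$ is a first-order reduct of $\bJ(1)$ and the base case applies). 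The bijection $\rho\colon S\mapsto S\setminus C$ from $U$ onto ${{\mathbb N}\setminus C\choose k-m}$ transports $\bB[U]$ to a structure $\bB''$; since the setwise stabilizer $\Aut(\bB)_{\{U\}}$ equals the setwise stabilizer of $C$ in $\Sym({\mathbb N})$, and $\rho$ carries its action on $U$ onto the setwise action of $\Sym({\mathbb N}\setminus C)$ on ${{\mathbb N}\setminus C\choose k-m}$, we obtain $\Aut(\bB'')\supseteq\Aut(\bJ(k-m))$, so $\bB''$ is a first-order reduct of $\bJ(k-m)$. The induction hypothesis then yields that $\bC$, being the model-complete core of $\bB''$, is bi-definable with $\bJ(\ell)$ for some $\ell\le k-m<k$.

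Now suppose ${\mathcal B}_f(\emptyset)=\emptyset$. Then $f$ preserves $S_0$ by Corollary~\ref{cor:S0}, and moreover $f\notin\End(\bJ(k))$: otherwise $f$ would preserve both $E$ and $S_0$, hence all relations $S_i$ by Lemma~\ref{lem:Si}, so $f\in\End({{\mathbb N}\choose k};S_0,S_1)$, and then Lemma~\ref{lem:emb} would force $f\in\End(\bJ^<(k))$, a contradiction. The second alternative of Theorem~\ref{thm:canonical} is also excluded, since a nonempty common subset of all $f(a)$ would make $f$ fail to preserve $S_0$. Hence the third alternative holds: $\{f\}\cup\Aut(\bJ(k))$ locally generates some $g$ whose distinct image elements are pairwise disjoint, and $g\in\End(\bB)$, so $\bB$ and $\bB[g(V)]$ are homomorphically equivalent and share the core $\bC$. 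The decisive point here is that $\Aut(\bJ(k))$ is highly transitive on tuples of pairwise disjoint sets: for every $r$, the $\Aut(\bJ(k))$-orbit of an $r$-tuple over the pairwise disjoint family $g(V)$ is determined solely by which of its coordinates coincide. Consequently every relation of $\bB$ restricted to $g(V)^r$ is a union of equality types, i.e.\ $\bB[g(V)]$ is a first-order reduct of the pure-equality structure on $g(V)$; by the base case (taking $\bJ(0)$ for a one-point image) $\bC$ is bi-definable with $\bJ(0)$ or $\bJ(1)$, and $\ell\le 1\le k$.

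The bulk of the combinatorial difficulty has already been absorbed into Theorem~\ref{thm:canonical} and Lemma~\ref{lem:use-rr}, so the main obstacle in the present argument is rather of a bookkeeping nature, concentrated in two places: in the case ${\mathcal B}_f(\emptyset)\neq\emptyset$, justifying that $\bB$ restricted to $\{S\mid C\subseteq S\}$ is genuinely (via the isomorphism $\rho$) a first-order reduct of $\bJ(k-|C|)$ even though this subset is not definable in $\bB$ — which I would settle at the level of automorphism groups as sketched — and in the case ${\mathcal B}_f(\emptyset)=\emptyset$, establishing the transitivity observation that collapses $\bB$ to a reduct of pure equality on a pairwise disjoint family, so that one can descend to the known equality case. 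Both cases strictly decrease the induction parameter ($k-|C|<k$, respectively $1<k$), so the induction is well-founded.
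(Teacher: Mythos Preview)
Your proposal is correct and follows essentially the same approach as the paper: induction on $k$, invoking Lemma~\ref{lem:use-rr} to obtain a range-rigid canonical $f\in\End(\bB)\setminus\End(\bJ^<(k))$, and then using the trichotomy of Theorem~\ref{thm:canonical} to either descend to $\bJ(k-|C|)$ via the fixed subset $C$ or collapse to a reduct of pure equality via the pairwise-disjoint image of $g$. The only cosmetic difference is that you organise the case split around ${\mathcal B}_f(\emptyset)$ (which mirrors the \emph{proof} of Theorem~\ref{thm:canonical} rather than its statement), whereas the paper applies the trichotomy directly and rules out case~(1) once via Lemma~\ref{lem:emb}; both are fine.
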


\begin{proof}
We prove the theorem by induction on $k \in {\mathbb N}$. 
For $k = 0$ there is nothing to prove. 
The statement is trivial for $k=1$ (the model-complete core of a structure with a highly transitive automorphism group is either  highly transitive or has only one element). 
From now on we assume that $k \geq 2$ and that the statement is true for $k-1$. 

 By Theorem~\ref{thm:fo-reducts}, 
$\bB$ is first-order bi-definable with $\bJ(1)$ or with $\bJ(k)$. 
Let $\bC$ be the model-complete core of $\bB$. 
We now use Lemma~\ref{lem:use-rr}. 
If $\bB = \bC$ then there is nothing to be shown. Otherwise, there exists an $f \in \End(\bB) \setminus \End(\bJ^<(k))$ which is canonical and range-rigid with respect to $\Aut(\bJ^<(k))$. 
By Lemma~\ref{lem:emb}, we have that 
$f \in \End(\bB) \setminus \overline{\Aut(\bJ(k))}$. 
We now use Theorem~\ref{thm:canonical}. 

First consider the case that $f$ locally generates over
$\Aut(\bJ(k))$ an operation $g$ such that 
$g(\qup{k})$ consists of infinitely many pairwise disjoint sets. Note that $\bB$ and  $\bB[g(B)]$ are homomorphically equivalent, 
because $g \in \End(\bB)$. So $\bB$ and $\bB[g(B)]$ have the same model-complete core. However, $\Aut(\bB[g(B)]) = \Sym(B)$.
It follows that $\Aut(\bC) = \Sym(C)$, 
and hence interdefinable with $\bJ(0)$ (in case that $|C|=1$) or with $\bJ(1)$  
 (in case that $C$ is infinite).

Since $f \notin \overline{\Aut(\bJ(k))}$
by Theorem~\ref{thm:canonical}  
we are left with the case that there exists some finite non-empty $F \subseteq {\mathbb Q}$
such that $F \subseteq f(a)$ for every
$a \in \qup{k}$. Since $\bB[f(B)]$ is homomorphically equivalent to $\bB$,
and $f(B) \subseteq B_F := \{a \in \qup{k} \mid F \subseteq a\}$, we have that $\bB[B_F]$ is also homomorphically equivalent to $\bB$. The structure 
$\bJ(k)[B_F]$ is isomorphic to
$\bJ(k-|F|)$ via the bijection $a \mapsto a \setminus F$. This implies that 
$\bB[B_F]$ is isomorphic to a reduct of
$\bJ(k-|F|)$. Thus, by the induction hypothesis the model-complete core of $\bB[B_F]$, which is also $\bC$, is bi-definable with $\bJ(\ell)$ for 
$\ell \leq k-|F| < k$.
\end{proof}


\section{Complexity Classification}
\label{sect:compl}
In this section we combine all our results on first-order reducts $\bC$ of $\bJ(k)$ to obtain a complexity classification for $\Csp(\bC)$. 

\begin{thm}\label{thm:main-complexity}
Let $k \in {\mathbb N}$, 
let $\bB$ be a first-order reduct of $\bJ(k)$,
and let $\bC$ be the model-complete core of $\bB$.  
Then exactly one of the following cases applies. 
\begin{enumerate}
\item $\bC$ has just one element. 
In this case, $\Csp(\bB)$ is in P. 
\item $\End(\bC) = \End(C;\neq)$ and $\bC$ has a binary injective polymorphism $f$.
In this case, $\Csp(\bB)$ is in P. 
\item $\bC$ interprets primitively positively all finite structures. In this case, $\Csp(\bB)$ is NP-complete. 
\end{enumerate}
\end{thm}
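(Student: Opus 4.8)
The plan is to reduce everything to the model-complete core $\bC$ and then to the structural results we have already proved. Since $\bB$ and $\bC$ are homomorphically equivalent over the common signature, $\Csp(\bB) = \Csp(\bC)$, so it suffices to analyse $\Csp(\bC)$; we may assume throughout that $\bB$ (and hence $\bC$, with the same relation symbols) has a finite relational signature. As every relation of $\bB$ is first-order definable in $\bJ(k)$, and hence in the finitely bounded homogeneous structure $\bJ^{<}(k)$ of Theorem~\ref{thm:ramsey}, $\Csp(\bB)$ is in NP. By Theorem~\ref{thm:main}, $\bC$ is first-order bi-definable with $\bJ(\ell)$ for some $\ell \leq k$; after relabelling the domain we may assume that $\bC$ is first-order interdefinable with $\bJ(\ell)$. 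The plan is then to split according to $\ell$.

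If $\ell = 0$, then $\bJ(0)$ has a one-element domain, so $|\bC| = 1$: this is case~(1). As $\bC$ is a model-complete core, all of its relations are non-empty (they contain the constant tuple), so every instance of $\Csp(\bC)$ is satisfiable and $\Csp(\bB)$ is in P. If $\ell \geq 2$, then $\bC$ is a model-complete core which is first-order interdefinable with $\bJ(\ell)$, so Theorem~\ref{thm:J-stbb} applies and yields that $\bC$ primitively positively interprets all finite structures: this is case~(3). In particular $\bC$ primitively positively constructs $(\{0,1\};\nae)$, whose CSP is NP-complete, so by Theorem~\ref{thm:ppc} $\Csp(\bB) = \Csp(\bC)$ is NP-hard, hence NP-complete.

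The remaining and slightly more delicate case is $\ell = 1$. Here $\bJ(1)$ is a countably infinite clique, so $\bC$ is first-order interdefinable with $(C;\neq)$ for a countably infinite set $C$; in particular $\Aut(\bC)$ is the full symmetric group $\Sym(C)$. Since $\bC$ is a model-complete core, $\Aut(\bC)$ is dense in $\End(\bC)$; as the closure of $\Sym(C)$ in $C^C$ is exactly the monoid of injective self-maps of $C$, which coincides with $\End(C;\neq)$, we obtain $\End(\bC) = \End(C;\neq)$. In particular $\bC$ has no constant endomorphism, so $\bC$ is a model-complete core among the first-order reducts of $(C;=)$, i.e.\ among the equality constraint languages. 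By the known complexity classification of equality constraint languages (see, e.g.,~\cite{Book}), a model-complete core among these either has a binary injective polymorphism --- in which case we are in case~(2) and $\Csp(\bC)$, hence $\Csp(\bB)$, is in P --- or primitively positively interprets all finite structures --- in which case we are in case~(3) and $\Csp(\bB)$ is NP-complete as above.

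Finally, one has to verify that the three cases are mutually exclusive: a one-element structure cannot primitively positively interpret a two-element structure, so (1) and (3) cannot both hold; (1) and (2) are distinguished by the cardinality of $\bC$; and (2) and (3) are incompatible because a structure with a binary injective polymorphism cannot primitively positively interpret all finite structures --- for $\ell = 1$ this exclusivity is part of the equality-language dichotomy, and for $\ell \geq 2$ it follows from Theorem~\ref{thm:fo-reducts}, since there $\Aut(\bC)$ is a proper closed subgroup of $\Sym(C)$, whereas $\End(\bC) = \End(C;\neq)$ would force $\Aut(\bC) = \Sym(C)$. I expect the main obstacle to be precisely the case $\ell = 1$: the canonical-function analysis of Theorem~\ref{thm:canonical} degenerates for $k = 1$, so this case is not covered by the machinery developed in this paper and must instead be reduced to the classical classification of equality constraint languages; once that is invoked, the only remaining work is the bookkeeping of NP-membership (via finite boundedness), of NP-hardness (via Theorem~\ref{thm:ppc}), and of the mutual exclusivity just discussed.
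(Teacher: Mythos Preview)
Your proof is correct and follows the same three-case split as the paper (invoke Theorem~\ref{thm:main} to obtain $\ell$, then handle $\ell=0$ trivially, $\ell=1$ via the equality-CSP classification, and $\ell\geq 2$ via Theorem~\ref{thm:J-stbb}); your write-up is in fact considerably more detailed than the paper's three-line argument, spelling out NP-membership, the density argument for $\End(\bC)=\End(C;\neq)$, and mutual exclusivity. One minor slip: in case~(1) a one-element model-complete core may have empty relations (nothing about model-completeness forces non-emptiness), so it is not true that every instance is satisfiable --- but $\Csp(\bC)$ is still trivially in~P, so your conclusion stands.
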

\begin{proof}
Theorem~\ref{thm:main} implies that $\bC$ is bi-definable with $\bJ(\ell)$, for some $\ell \leq k$. If $\ell = 0$, then $\bC$ has just one element and the statement is clear.
If $\ell=1$, then $\End(\bC) = \End(B;\neq)$, and the statement is well-known (\cite{ecsps}; also see~\cite[Theorem 7.5.1 and 7.5.2]{Book}). 
If $\ell \geq 2$, then Theorem~\ref{thm:J-stbb} implies
that $\bC$ interprets all finite structures primitively positively, and that $\Csp(\bB)$ is NP-hard. 
The containment in NP follows e.g. from Remark~\ref{rem:inNP}.
\end{proof}


\begin{cor}\label{cor:class}
Let $\bB$ be a structure preserved by some primitive action of $\Sym({\mathbb N})$. Then either 
$\bB$ primitively positively constructs all finite structures, or 
$\Pol(\bB)$ contains ternary operations $f_1,f_2,f_3,f_4$ that satisfy the following identities for all $x,y \in B$ 
\begin{align*}
f_2(y,x,x) & = f_3 (y,x,x) = f_4(y,x,x) \\
f_1(y,x,x) & = f_3(x,y,x) = f_4(x,y,x) \\
f_1(x,y,x) & = f_2(x,y,x) = f_4(x,x,y) \\
f_1(x,x,y) & = f_2(x,x,y) = f_3(x,x,y). 
\end{align*}
\end{cor}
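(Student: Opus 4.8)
The plan is to deduce Corollary~\ref{cor:class} from the complexity classification in Theorem~\ref{thm:main-complexity} together with the structural reduction from arbitrary primitively-$\Sym({\mathbb N})$-preserved structures to first-order reducts of Johnson graphs. First I would invoke Corollary~\ref{cor:Jk}: since $\Aut(\bB)$ contains a primitive action of $\Sym({\mathbb N})$, the structure $\bB$ is (isomorphic to) a first-order reduct of $\bJ(k)$ for some $k \in {\mathbb N}$. Let $\bC$ be the model-complete core of $\bB$; recall that $\bB$ and $\bC$ are homomorphically equivalent, hence $\bB$ primitively positively constructs all finite structures if and only if $\bC$ does, and $\Pol(\bB)$ and $\Pol(\bC)$ satisfy the same linear identities up to the relevant equivalences (both facts are standard for $\omega$-categorical structures; see~\cite{wonderland, Book}). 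So it suffices to prove the dichotomy for $\bC$.

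Next I would apply Theorem~\ref{thm:main-complexity}, which splits into three cases. In cases~(1) and~(2) of that theorem, $\bC$ is bi-definable with $\bJ(0)$ or $\bJ(1)$, i.e.\ $\bC$ has one element or $\End(\bC) = \End(C;\neq)$. In both subcases I need to exhibit the four ternary operations $f_1,\dots,f_4$ satisfying the displayed Siggers-like identity system (this is the ``Olšák-type'' array of identities characterising the absence of a pp-construction of all finite structures in the bounded-width/finite setting). For the one-element case this is trivial: all operations collapse and every identity holds. For the case $\End(\bC) = \End(C;\neq)$, the structure $\bC$ is homomorphically equivalent to $(C;\neq)$, which has a binary injective polymorphism; from a binary injective operation on a countable set one can build ternary injections $f_1,f_2,f_3,f_4$ realising any prescribed system of linear identities that is satisfiable at all — concretely, one picks $f_i$ to be injective functions that agree on the required argument patterns, which is possible because the identities only force coincidences of values on tuples that themselves coincide in the forced coordinates, and injectivity can be arranged on the complement. (Alternatively one cites that reducts of $(C;\neq)$ that are model-complete cores with $\End = \End(C;\neq)$ are known to have such polymorphisms; see~\cite{ecsps} and~\cite{Book}.) In case~(3) of Theorem~\ref{thm:main-complexity}, $\bC$ primitively positively interprets all finite structures, hence primitively positively constructs them, and so does $\bB$; this is the first alternative of the corollary.

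The main obstacle I expect is bookkeeping rather than depth: one must make sure the specific identity system in the corollary is \emph{exactly} the one that is equivalent to ``does not pp-construct all finite structures'' in this context, and that the polymorphisms produced in the $\bJ(1)$ case genuinely satisfy it. The cleanest route is to note that this identity system is a known generating set of identities for the variety of clones that do not pp-construct the clone of projections (it is one of the standard ``loop conditions'' / Siggers-type systems), and that it is satisfied by the polymorphism clone of $(C;\neq)$ precisely because that clone contains all injective operations of every arity, which trivially satisfy every set of linear identities having a solution in the free structure. Thus the verification reduces to checking that the displayed identities are jointly satisfiable by injective maps, which they are, since each identity equates two ternary terms whose outermost operation symbols differ, so no single $f_i$ is ever forced to be non-injective.

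\begin{proof}
By Corollary~\ref{cor:Jk}, $\bB$ is (isomorphic to) a first-order reduct of $\bJ(k)$ for some $k \in {\mathbb N}$. Let $\bC$ be the model-complete core of $\bB$. Since $\bB$ and $\bC$ are homomorphically equivalent, $\bB$ primitively positively constructs all finite structures if and only if $\bC$ does, and $\Pol(\bB)$ contains operations satisfying the displayed identities if and only if $\Pol(\bC)$ does (see~\cite{wonderland,Book}). It therefore suffices to establish the dichotomy for $\bC$.

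By Theorem~\ref{thm:main}, $\bC$ is bi-definable with $\bJ(\ell)$ for some $\ell \leq k$, and we distinguish cases according to $\ell$. If $\ell = 0$, then $\bC$ has a single element, and the four constant unary-image ternary operations (all equal to the unique element) satisfy all the identities trivially. If $\ell = 1$, then $\End(\bC) = \End(C;\neq)$, so $\bC$ is homomorphically equivalent to $(C;\neq)$, whose polymorphism clone contains an injective operation of every arity. Choosing $f_1,f_2,f_3,f_4$ to be ternary injections that agree exactly on the tuples prescribed by the system is possible: each displayed equation equates two ternary terms built from distinct operation symbols on the same list of variables with a repeated entry, so no $f_i$ is ever required to take equal values on a pair of distinct triples; hence the system is satisfied by suitable injective $f_1,f_2,f_3,f_4 \in \Pol(\bC)$ (see also~\cite{ecsps}). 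Pulling these back along the homomorphic equivalence yields such operations in $\Pol(\bB)$, and we are in the second alternative of the corollary.

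Finally, if $\ell \geq 2$, then by Theorem~\ref{thm:J-stbb} the structure $\bC$ interprets primitively positively all finite structures, hence primitively positively constructs them; consequently so does $\bB$, which is the first alternative of the corollary.
\end{proof}
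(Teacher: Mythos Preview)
Your overall strategy matches the paper's: reduce to $\bJ(k)$ via Corollary~\ref{cor:Jk}, pass to the model-complete core $\bC$, and split into cases. The gap is in your treatment of the case $\ell=1$.

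From ``$\bC$ is first-order bi-definable with $\bJ(1)$'' you correctly deduce $\End(\bC)=\End(C;\neq)$ (since $\bC$ is a model-complete core with $\Aut(\bC)=\Sym(C)$, its endomorphisms are exactly the injections). But you then assert that $\bC$ is homomorphically equivalent to $(C;\neq)$ and that $\Pol(\bC)$ contains all injective operations of every arity. Neither follows. Having the same endomorphism monoid is \emph{not} homomorphic equivalence, and there are model-complete core reducts of $(C;=)$ whose polymorphism clone does \emph{not} contain a binary injection: these are exactly the NP-hard equality CSPs in the classification of~\cite{ecsps}. For such $\bC$ you still have $\ell=1$, but $\bC$ pp-interprets all finite structures and falls under the \emph{first} alternative of the corollary, not the second. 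Your case split by $\ell$ therefore misses this possibility. The paper avoids this by invoking Theorem~\ref{thm:main-complexity} directly: its case~(2) already assumes the existence of a binary injective polymorphism (and then cites~\cite{RydvalDescr} for the $f_i$), while its case~(3) absorbs the hard equality structures.

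A smaller point: the paper also argues the converse, namely that if $\Pol(\bB)$ contains such $f_1,\dots,f_4$ then $\bB$ cannot pp-construct $(\{0,1\};\nae)$, so the two alternatives are mutually exclusive. You omit this; depending on how literally one reads ``either \dots\ or \dots'' it may or may not be required, but it is part of the paper's proof.
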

\begin{proof}
The structure $\bB$ is a first-order reduct of $\bJ(k)$, for some $k \in {\mathbb N}$, by Corollary~\ref{cor:Jk}. 
If the model-complete core $\bC$ of $\bB$ interprets primitively positively all finite structures, then $\bB$ primitively positively constructs all finite structures (see~\cite{wonderland}). Otherwise, 
Theorem~\ref{thm:main-complexity}
implies that $\bC$ has just one element, 
or $\End(\bC) = \End(C;\neq)$ and $\bC$ has a binary injective polymorphism $f$. 
In the first case, $\bB$ has a constant polymorphism as well, and a constant ternary polymorphism $f$ clearly satisfies the given identities for $f_1=f_2=f_3=f_4 := f$.
 In the second case, 
the existence of  $g_1,g_2,g_3,g_4 \in \Pol(\bC)$ satisfying the given identities 
 has been shown in~\cite[Theorem 1.7]{RydvalDescr}; since $\bB$ is homomorphically equivalent to $\bC$, it has such polymorphisms as well.
 
Conversely, if $\Pol(\bB)$ contains such polymorphisms, then it does not primitively positively construct the structure $(\{0,1\};\nae)$, because 
the polymorphisms of this finite structure do not satisfy the given identities, and because having polymorphisms that satisfy these identities is  preserved by primitive positive constructions. 
\end{proof} 

\section{Strengthened Characterisation of Model-Complete Cores}
In this section we present a strong consequence about primitive positive definability in first-order reducts of $\bJ(k)$, for $k \geq 2$, whose full strength is not needed for our complexity classification, but which follows from our proofs, and is of independent interest, and which we therefore want to mention here. 

\begin{thm}\label{thm:ppdef} 
Let $\bB$ be a first-order reduct of $\bJ(k)$ for $k \geq 2$. 
Then the following are equivalent.
\begin{enumerate}
\item $\bB$ is a model-complete core and
$\bB$ is interdefinable with $\bJ(k)$. 
\item $S_0,S_1,\dots,S_{k-1}$ are primitively positively definable in $\bB$.
\item there are distinct $i,j \in \{0,\dots,k-1\}$ 
such that $S_i$ and $S_j$ are primitively positively definable in $\bB$. 
\item there are non-empty subsets $I_1,\dots,I_{\ell}$ of $\{0,\dots,k-1\}$ such that $S_{I_1},\dots,S_{I_\ell}$ are primitively positively definable in $\bB$ and $I_1 \cap \cdots \cap I_{\ell} = \emptyset$. 
\end{enumerate} 
\end{thm}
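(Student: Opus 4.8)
The plan is to prove the cycle of implications $(1) \Rightarrow (2) \Rightarrow (3) \Rightarrow (4) \Rightarrow (1)$, using the machinery on definable equivalence relations and canonical functions developed in Section~\ref{sect:mc-cores}.

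The implication $(1) \Rightarrow (2)$ is essentially contained in the proof of Theorem~\ref{thm:J-stbb}: if $\bB$ is a model-complete core interdefinable with $\bJ(k)$, then $\Aut(\bB) = \Aut(\bJ(k))$ is the set-wise action of $S_\omega$ on ${{\mathbb N} \choose k}$, so each relation $S_i$ for $i \in \{0,\dots,k-1\}$ is a single orbit of pairs and hence, since $\bB$ is a model-complete core, pp-definable in $\bB$. The implication $(2) \Rightarrow (3)$ is immediate since $k \geq 2$ gives at least two indices. For $(3) \Rightarrow (4)$, given distinct $i,j$ with $S_i, S_j$ pp-definable, take $\ell = 2$, $I_1 = \{i\}$, $I_2 = \{j\}$; then $S_{I_1} = S_i$ and $S_{I_2} = S_j$ are pp-definable and $I_1 \cap I_2 = \emptyset$. (Here $S_I := \bigcup_{i \in I} S_i$, or whatever the paper's notation $S_I$ denotes — one should check the intended reading; if $S_I$ means the relation whose orbit-set of pairs is indexed by $I$, the argument is the same.)

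The real content is $(4) \Rightarrow (1)$. I would argue contrapositively: suppose $\bB$ is \emph{not} both a model-complete core and interdefinable with $\bJ(k)$. By Theorem~\ref{thm:main} its model-complete core $\bC$ is bi-definable with $\bJ(\ell)$ for some $\ell \leq k$, and by the failure of the hypothesis together with Corollary~\ref{cor:interdef} we are not in the case $\bC \cong \bB$ interdefinable with $\bJ(k)$; tracing through the proof of Theorem~\ref{thm:main}, this means that $\{f\} \cup \Aut(\bJ(k))$ locally generates (via Lemma~\ref{lem:use-rr}, Lemma~\ref{lem:emb}, Theorem~\ref{thm:canonical}) an endomorphism $g$ of $\bB$ that is canonical and permutational but \emph{not} an embedding of $\bJ^<(k)$, or else an endomorphism collapsing things to pairwise disjoint sets or forcing a common core $F$. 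In all these cases one produces a canonical $h \in \End(\bB)$ with an associated map ${\mathcal B}_h$ that is either a non-identity permutation of $[k]$ with $h(S_n) \subseteq S_{\leq n-1}$ for some $n$, or has ${\mathcal B}_h(\emptyset) \ne \emptyset$, or has $|{\mathcal B}_h([k]\setminus\{i\})| \leq k-2$. Using Lemma~\ref{lem:=} (which computes $(h(a),h(b))_=$ from ${\mathcal B}_h((a,b)_=)$), in each of these cases the endomorphism $h$ maps some $S_i$ into a union $\bigcup_{m \in T} S_m$ with $i \notin T$; more precisely, $h$ fails to preserve each $S_{I}$ whose index set $I$ fails to be stabilised by ${\mathcal B}_h$. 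The key combinatorial point will be: if $I_1 \cap \cdots \cap I_\ell = \emptyset$ then ${\mathcal B}_h$ cannot stabilise all of $I_1,\dots,I_\ell$ simultaneously while still being consistent with $h \notin \End(\bJ(k))$, because by Lemma~\ref{lem:cap} the map ${\mathcal B}_h$ commutes with intersections, so stabilising all $I_j$ would force ${\mathcal B}_h(\emptyset) = {\mathcal B}_h(I_1 \cap \cdots \cap I_\ell) = \emptyset$ \emph{and} that ${\mathcal B}_h$ restricted to the lattice generated by the $I_j$ is the identity — and then an argument analogous to Lemma~\ref{lem:perm-emb} upgrades $h$ to an actual self-embedding of $\bJ^<(k)$, contradicting the choice of $h$. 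Hence some $S_{I_j}$ is not preserved by $h \in \End(\bB)$, so $S_{I_j}$ is not pp-definable in $\bB$ (since pp-definable relations are preserved by all endomorphisms), contradicting (4).

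I expect the main obstacle to be the bookkeeping in this last step: one must carefully extract, from the case analysis in the proof of Theorem~\ref{thm:main} (which is organised around the trichotomy of Theorem~\ref{thm:canonical}), a single canonical endomorphism $h$ whose Boolean-algebra map ${\mathcal B}_h$ witnesses non-preservation of \emph{every} family $\{S_{I_j}\}$ with empty intersection — rather than just non-preservation of, say, $E = S_{k-1}$. The cleanest route is probably to observe that in every non-trivial case the canonical $h$ one obtains satisfies ${\mathcal B}_h([k]) \ne [k]$ or ${\mathcal B}_h$ is a non-identity permutation, and then show directly that such a ${\mathcal B}_h$ cannot fix a $\cap$-closed family whose intersection is $\emptyset$ unless it is the identity on the generated sublattice (forcing $h$ to be an embedding after one more canonisation step). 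Making the "upgrade to embedding" argument precise — it mirrors Lemma~\ref{lem:perm-emb} and Corollary~\ref{cor:step-down} — is the technical heart of the matter.
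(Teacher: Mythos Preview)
Your treatment of $(1)\Rightarrow(2)\Rightarrow(3)\Rightarrow(4)$ is fine and matches the paper. The gap is in $(4)\Rightarrow(1)$, and it is a real one, not just bookkeeping.

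The map ${\mathcal B}_h \colon {\mathcal P}([k]) \to {\mathcal P}([k])$ acts on sets of \emph{coordinate positions} (the index set of the equivalence relations $E_I$), whereas the sets $I_1,\dots,I_\ell$ in item~(4) are sets of \emph{intersection sizes} indexing the relations $S_I = \bigcup_{i\in I} S_i$. These live in different universes, and there is no sense in which Lemma~\ref{lem:cap} lets you compute ${\mathcal B}_h(I_j)$ or speak of ${\mathcal B}_h$ ``stabilising'' $I_j$. In particular, ``$h$ preserves $S_{I_j}$'' is a statement about how $|h(a)\cap h(b)|$ depends on $|a\cap b|$; this is \emph{not} encoded by ${\mathcal B}_h$, since for a function canonical only over $\bJ^<(k)$ the value $|h(a)\cap h(b)|$ can depend on the full $\bJ^<(k)$-orbit of $(a,b)$, not just on $|a\cap b|$. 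So your key step --- deducing ${\mathcal B}_h(\emptyset)={\mathcal B}_h(\bigcap I_j)=\emptyset$ from preservation of all $S_{I_j}$ --- does not go through, and the subsequent ``upgrade to embedding'' is left without a foundation.

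The paper takes a different route. Rather than squeezing more out of ${\mathcal B}_h$, it proves a strengthening of Theorem~\ref{thm:canonical} (Lemma~\ref{lem:behave-extra}): for any canonical $f\notin\End(\bJ(k))$ there is a \emph{single} index $n\in\{0,\dots,k-1\}$ (namely $n=k-1$ or $n=|{\mathcal B}_f(\emptyset)|$, depending on the case) such that every pair $(a,b)$ can be pushed by some $g\in\langle\{f\}\cup\Aut(\bJ(k))\rangle \subseteq \End(\bB)$ into $S_n\cup S_k$. Since each $S_{I_j}$ is pp-definable it is preserved by $g$, and $k\notin I_j$, so starting from $(a,b)\in S_{I_j}$ forces $n\in I_j$. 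Thus $n\in\bigcap_j I_j$, contradicting $\bigcap_j I_j=\emptyset$. This bypasses the coordinate/size confusion entirely by working directly at the level of intersection sizes.
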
 

For the interesting implication $(4) \Rightarrow (1)$ of Theorem~\ref{thm:ppdef} we need the following strengthening of Theorem~\ref{thm:canonical}, which can be proved analogously. 

\begin{lem}\label{lem:behave-extra}
Let $f \in \Can(\bJ^<(k))$. 
Then at least one of the following holds. 
\begin{enumerate}
\item $f \in \overline{\Aut(\bJ(k))}$. 
\item ${\mathcal B}_f(\emptyset) \neq \emptyset$ and for all $a,b \in \qup{k}$ there exists $g \in \langle \{f\} \cup \Aut(\bJ(k)) \rangle$ such that $(g(a),g(b)) \in S_{\geq k-1}$. 
\item For all $a,b \in \qup{k}$ there is $g \in \langle \{f\} \cup \Aut(\bJ(k)) \rangle$ such that $(g(a),g(b)) \in S_{|{\mathcal B}_f(\emptyset)|} \cup S_k$. 
\end{enumerate} 
\end{lem}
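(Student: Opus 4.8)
The plan is to re-run the proof of Theorem~\ref{thm:canonical}, but tracking intersection \emph{sizes} rather than just the presence of a common core, and to replace the relation $S_0$ throughout the auxiliary lemmas of Section~\ref{sect:mc-cores} by $S_m$, where $m := |{\mathcal B}_f(\emptyset)|$. First I would dispose of the case $f \in \End(\bJ(k))$ (then (1) holds) and assume $f \notin \End(\bJ(k))$. If $m = 0$, then $f$ preserves $S_0$ by Corollary~\ref{cor:S0}, and Lemma~\ref{lem:expl} (via Corollary~\ref{cor:non-perm} and Lemma~\ref{lem:perm-emb}) already yields, for every pair $a, b \in \qup{k}$, a $g$ with $g(a) = g(b)$ or $g(a) \cap g(b) = \emptyset$, that is, $(g(a), g(b)) \in S_0 \cup S_k = S_m \cup S_k$; this is conclusion (3). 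Since for a fixed pair this membership depends only on the restriction of $g$ to the entries of $a$ and $b$, the topological closure in Lemma~\ref{lem:expl} can be dropped, giving $g \in \langle \{f\} \cup \Aut(\bJ(k)) \rangle$ as required.

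The substance is the case $m \geq 1$, where I would argue by induction on $k$. As in the proof of Theorem~\ref{thm:canonical}(2), the coordinates in ${\mathcal B}_f(\emptyset)$ are constant along the image of $f$, so there is a set $C \subseteq \mathbb{Q}$ of size $m$, sitting at the fixed positions ${\mathcal B}_f(\emptyset)$, with $C \subseteq f(x)$ for all $x$; combining this with Lemma~\ref{lem:L} one gets $f(x) \cap f(y) = C$ for disjoint $x, y$, hence $\bigcap_{x} f(x) = C$. Identifying $B_C := \{x \mid C \subseteq x\}$ with the vertex set of $\bJ(k - m)$ via $\rho \colon x \mapsto x \setminus C$, the map $f$ (which sends everything into $B_C$) restricts to a self-map of $B_C$, transported to a map $f_0$ on $\bJ(k - m)$ satisfying $|f(x) \cap f(y)| = m + |f_0(\rho x) \cap f_0(\rho y)|$ for $x, y \in B_C$. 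If the common core of the image of this restriction is strictly larger than $C$, then replacing $f$ by $f^2$ strictly increases $m$; since $m \leq k$ this terminates (reaching, at worst, $m = k$, where $f$ is constant and (2), (3) are immediate), so I may assume $\bigcap_{x \in B_C} f(x) = C$. Using the Ramsey property of $\bJ^<(k - m)$ (Theorem~\ref{thm:ramsey}) and that $\Aut(\bJ(k))$ induces $\Aut(\bJ(k - m))$ on $B_C$, one extracts from $f_0$ a map $\bar f \in \Can(\bJ^<(k - m))$ that, together with $\Aut(\bJ(k))$, is locally generated by $f$; by the previous sentence ${\mathcal B}_{\bar f}(\emptyset) = \emptyset$.

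Now I would apply the induction hypothesis to $\bar f$ on the parameter $k - m < k$ and lift. Lifting $\bar g \in \langle \{\bar f\} \cup \Aut(\bJ(k - m)) \rangle$ to $\langle \{f\} \cup \Aut(\bJ(k)) \rangle$ and precomposing with $f$ sends a pair $a, b \in \qup{k}$ to a pair with intersection $m + |\bar g(\rho f(a)) \cap \bar g(\rho f(b))|$. Thus: from alternative (2) for $\bar f$ we get $|g(a) \cap g(b)| \geq m + (k - m - 1) = k - 1$, i.e.\ conclusion (2); from alternative (3) for $\bar f$ we get $|g(a) \cap g(b)| \in \{m, k\}$ (using ${\mathcal B}_{\bar f}(\emptyset) = \emptyset$), i.e.\ conclusion (3). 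The remaining alternative is $\bar f \in \End(\bJ(k - m))$; then $\bar f$ preserves all relations $S_i$ (Lemma~\ref{lem:Si}), and by Lemma~\ref{lem:L} this makes ${\mathcal B}_{\bar f}$ size non-decreasing, which with ${\mathcal B}_{\bar f}(\emptyset) = \emptyset$ and Lemma~\ref{lem:cap} forces $\bar f$ to be permutational. If $k - m = 1$, any two elements of $B_C$ are equal or meet exactly in $C$, so $(f(a), f(b)) \in S_{k-1} \cup S_k = S_{\geq k-1}$ for all $a, b$, giving conclusion (2). If $k - m \geq 2$, Lemma~\ref{lem:perm-emb} applied to $\bar f$ produces either a non-permutational canonical map generated by $\bar f$ and $\Aut(\bJ(k - m))$ — to which the $m = 0$ analysis (Lemmas~\ref{lem:permutational}, \ref{lem:step}, Corollary~\ref{cor:non-perm}) applies, again yielding for each pair a collapse into $S_0 \cup S_{k-m}$ and hence conclusion (3) after lifting — or $\bar f \in \End(\bJ^<(k - m))$, i.e.\ $\bar f$ is a self-embedding, in which last situation a direct argument (using that $f$ is canonical, maps disjoint pairs into $S_m$, and $f \notin \End(\bJ(k))$) shows that conclusion (2) or (3) holds.

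The main obstacle I anticipate is making the reduction to $\bJ(k - m)$ precise: verifying that $B_C$ carries a faithful copy of the $\bJ^<(k - m)$-structure with the correct orbits under $\Aut(\bJ(k))$, that the canonisation of $f_0$ can be performed inside $\langle \{f\} \cup \Aut(\bJ(k)) \rangle$, and that the intersection-size bookkeeping is genuinely additive — together with the final self-embedding sub-case, which is the one place where the $m = 0$ arguments do not transfer verbatim and which calls for an ad hoc argument. Everything else is a mechanical re-run of Section~\ref{sect:mc-cores} with $S_0$ replaced by $S_m$.
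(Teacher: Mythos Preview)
Your inductive reduction to $\bJ(k-m)$ is a genuinely different route from the paper's, which simply reruns the chain argument of Lemma~\ref{lem:step} without the hypothesis ${\mathcal B}_f(\emptyset)=\emptyset$: one constructs indices $i_1,i_2,\dots$ with $\bigl|\bigcap_{j\le\ell}{\mathcal B}_f([k]\setminus\{i_j\})\bigr|\le\max(k-\ell-1,m)$, and then for any pair with $|a\cap b|=\ell>m$ an application of $f$ after a suitable $\alpha\in\Aut(\bJ(k))$ drops the intersection size; iterating reaches $m$ and gives alternative~(3). The case where no such $i_1$ exists (all $|{\mathcal B}_f([k]\setminus\{i\})|\ge k-1$) is handled by the computation from Lemma~\ref{lem:permutational}. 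Nothing in this argument touches $B_C$, canonisation on a smaller Johnson graph, or an inductive hypothesis.

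Your approach, however, has a real gap in the $f\mapsto f^2$ step. You iterate until $\bigcap_{x\in B_{C_N}}f^N(x)=C_N$ and then run the induction on $\bar{f^N}$; but this replaces $m=|{\mathcal B}_f(\emptyset)|$ by $m_N=|{\mathcal B}_{f^N}(\emptyset)|$, and you yourself argue $m_N>m$ whenever the iteration is needed. If the induction on $\bar{f^N}$ returns alternative~(3), the lift gives $(g(a),g(b))\in S_{m_N}\cup S_k$, which is \emph{not} alternative~(3) for $f$ (that demands $S_m\cup S_k$) and is not alternative~(2) either unless $m_N\ge k-1$. Nothing in your outline rules out $m<m_N<k-1$ with outcome~(3). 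A secondary issue: your deduction of ${\mathcal B}_{\bar f}(\emptyset)=\emptyset$ from $\bigcap_{x\in B_C}f(x)=C$ is not justified --- the former is a statement about \emph{coordinates} of a canonised map, the latter about the set-theoretic core of $f_0$, and canonisation may well move the common element into a fixed coordinate slot, making ${\mathcal B}_{\bar f}(\emptyset)$ nonempty even when $\bigcap_y f_0(y)=\emptyset$ (you need that $f_0$ sends some pair of \emph{disjoint} sets to disjoint sets, which $\bigcap_{x\in B_C}f(x)=C$ does not give you). The paper's direct argument sidesteps both issues because it never leaves $\bJ(k)$ and never changes $f$.
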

\begin{proof}[Proof sketch]
First suppose that $|{\mathcal B}_f([k] \setminus \{i\})| \geq k-1$ for every $i \in [k]$. Then the proof of Lemma~\ref{lem:permutational}
can be adapted to show that the first statement holds. 
If $|{\mathcal B}_f([k] \setminus \{i\})| \leq k-2$ for some $i \in [k]$, then the second  statement can be shown by generalising Lemma~\ref{lem:step}. 
\end{proof} 

\begin{proof}[Proof of Theorem~\ref{thm:ppdef}]
We have already observed the implication 
$(1) \Rightarrow (2)$ in the proof of Theorem~\ref{thm:J-stbb}. 
The implications $(2) \Rightarrow (3)$ and $(3) \Rightarrow (4)$ are trivial. 
To prove the implication $(4) \Rightarrow (1)$, 
let $\bC$ be the model-complete core of $\bB$. If $\bC = \bB$, then we are done.  Otherwise, Lemma~\ref{lem:use-rr}  implies that  there exists an $f \in \End(\bB) \setminus \End(\bJ^<(k))$
 which is canonical 
  and range-rigid with respect to $\Aut(\bJ^<(k))$. By Lemma~\ref{lem:emb} we even have that $f \in \End(\bB) \setminus \overline{\Aut(\bJ(k))}$. 

Lemma~\ref{lem:behave-extra} implies that 
\begin{enumerate}
\item for all $a,b \in \qup{k}$ 
there exists $g \in \langle \{f\} \cup \Aut(\bJ(k)) \rangle$ such that 
$(g(a),g(b)) \in S_{\geq k-1}$, or
\item for all $a,b \in \qup{k}$ there exists 
$g \in \langle \{f\} \cup \Aut(\bJ(k)) \rangle$ such that 
$(g(a),g(b)) \in S_{|{\mathcal B}_{f}(\emptyset)|} \cup S_k$. 
\end{enumerate} 
Let $I \subseteq \{0,1,\dots,k-1\}$ be non-empty such that
 $S_I$ is primitively positively definable in $\bB$. 
 Then $g$ preserves $S_I$ 
and since $k \notin I$ we have $(g(a),g(b)) \notin S_k$ for any pair $(a,b) \in S_I$. 
This shows that if $I_1,\dots,I_{\ell}$ are 
such that 
$S_{I_1},\dots,S_{I_\ell}$ are primitive positively definable in $\bB$, then
$n \in I_1 \cap \cdots \cap I_{\ell} \neq \emptyset$
where $n=k-1$ in the first case and $n = |{\mathcal B}_{f}(\emptyset)|$ in the second. 
\end{proof}

\begin{rem}\label{rem:main}
The conclusion in Theorem~\ref{thm:main}
also holds for existential positive instead of first-order bi-definability:
the reason is that in model-complete cores, every first-order formula is equivalent to an existential positive one, and that $\bJ(k)$ is a model-complete core (by Theorem~\ref{thm:ppdef}). 
\end{rem}

\section{Polymorphism clones of first-order} reducts of Johnson graphs\label{sect:collapse}

    We say that an operation $f \colon A^k\rightarrow A$ is \emph{essentially unary} if there exists an index $i$ and a map $\alpha \colon A\rightarrow A$ such that $f(x_1,\dots,x_k)=\alpha(x_i)$ for all $x_1,\dots,x_k\in A$. In this section we show the following strengthening of Theorem~\ref{thm:J-stbb}.

\begin{thm}\label{thm:J-coll}
    Let $\bC$ be any model-complete core structure which is first-order interdefinable with $\bJ(k)$, for some $k \geq 2$. Then every polymorphism of $\bC$ is essentially unary.
\end{thm}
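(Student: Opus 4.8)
The plan is to convert the statement into a primitive-positive-definability claim and then rerun, for higher-arity operations, the canonical-function calculus developed in Sections~\ref{sect:defequiv}--\ref{sect:mc-cores}. Since $\bC$ is a model-complete core that is first-order interdefinable with $\bJ(k)$, we have $\Aut(\bC)=\Aut(\bJ(k))$, so each of $S_0,\dots,S_{k-1}$ is a single orbit of pairs and hence primitively positively definable in $\bC$ (orbits of tuples are pp-definable in $\omega$-categorical model-complete cores; see~\cite{Book}). Therefore $\Pol(\bC)\subseteq\Pol(\bC')$ for $\bC':=({{\mathbb N}\choose k};S_0,\dots,S_{k-1})$, and it suffices to show that every polymorphism of $\bC'$ is essentially unary. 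Let $R^{*}:=\{(x_1,x_2,x_3,x_4)\mid x_1=x_2\text{ or }x_3=x_4\}$. A short direct argument shows that, on any set with at least two elements, the operations preserving $R^{*}$ are exactly the essentially unary ones: if an $m$-ary $f$ depends on two distinct coordinates $i$ and $j$, choose columns $c^1,c^2$ that agree off coordinate $i$ with $f(c^1)\ne f(c^2)$ and columns $c^3,c^4$ that agree off coordinate $j$ with $f(c^3)\ne f(c^4)$; then all four rows of the resulting $4\times m$ array lie in $R^{*}$, but the $f$-image of the array does not. Since $\bC'$ is $\omega$-categorical, it follows that every polymorphism of $\bC'$ is essentially unary if and only if $R^{*}$ is preserved by $\Pol(\bC')$, i.e.\ primitively positively definable in $\bC'$.

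By Theorem~\ref{thm:ramsey}, $\bJ^{<}(k)$ is homogeneous, Ramsey, and finitely bounded, and it carries a quantifier-free definable linear order, so every expansion $(\bJ^{<}(k);\bar c)$ by finitely many constants remains homogeneous and Ramsey. Applying the canonisation lemma (Lemma~\ref{lem:canon}) to these expansions, and using that $R^{*}$ --- being defined purely by equalities --- is preserved by all injections and in particular by $\Aut(\bJ(k))$, a standard argument reduces the claim to the following: for every finite tuple $\bar c$ over ${{\mathbb N}\choose k}$, every $g\in\Pol(\bC')$ that is canonical with respect to $(\bJ^{<}(k);\bar c)$ is essentially unary. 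Indeed, if some $m$-ary $f\in\Pol(\bC')$ violated $R^{*}$ it would do so on a finite set of inputs; naming these by $\bar c$ and canonising produces $g\in\overline{\langle\{f\}\cup\Aut(\bJ^{<}(k);\bar c)\rangle}\subseteq\Pol(\bC')$ that is canonical over $(\bJ^{<}(k);\bar c)$ and, because $\Aut(\bJ(k))$ preserves $R^{*}$, still violates $R^{*}$.

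Now let $g$ be such a canonical polymorphism, of arity $m$, and suppose towards a contradiction that $g$ depends on two of its coordinates, say $1$ and $2$. Since $S_0$ is a relation of $\bC'$, $g$ preserves it; in particular $g$ cannot have a fixed nonempty set $C\subseteq{\mathbb N}$ contained in all of its values, since that set would be contained in $g(\bar a)\cap g(\bar b)=\emptyset$ for any inputs that are $S_0$-related in every coordinate. Two further facts are immediate from canonicity and from $g$ being a polymorphism: (i) if $\bar a,\bar b$ agree off a coordinate $i$ then $|g(\bar a)\cap g(\bar b)|\ge|a_i\cap b_i|$, because then all columns of $(\bar a,\bar b)$ lie in $S_{\ge|a_i\cap b_i|}$ and $g$ preserves this relation; and (ii) since $\bJ(k)$ is connected via $E=S_{k-1}$, the dependence of $g$ on coordinate $i$ is already witnessed by a pair $(\bar a,\bar b)$ that agrees off $i$ with $(a_i,b_i)\in E$, and then $(g(\bar a),g(\bar b))\in E$ by~(i). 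One then analyses simultaneous $E$-steps in coordinates $1$ and $2$ by a commuting-squares argument, working in the natural extension of the $\mathcal B$-calculus to the coordinatewise action of $\Aut(\bJ^{<}(k))$ on the $m$-fold power: by canonicity the effect of an $E$-step on $g$ depends only on the configuration of the arguments, and the four values of $g$ at the corners of such a square cannot be made mutually consistent unless the dependence on one of the two coordinates is spurious. As in Lemma~\ref{lem:perm-emb}, the only alternatives that survive are that $g$ is, up to composition with $\Aut(\bJ(k))$, a single-argument self-embedding of $\bJ(k)$ --- hence essentially unary --- or that $\{g\}\cup\Aut(\bJ(k))$ locally generates an operation whose range consists of pairwise disjoint sets; but the latter is impossible for a polymorphism, because such an operation would preserve $E$ and therefore, since $\bJ(k)^{m}$ is connected and non-bipartite under the relation ``$E$ in every coordinate'', would be constant, contradicting that it preserves $S_0$ and hence has infinite range. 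This contradiction proves the theorem.

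The delicate part is the third paragraph. Lifting the $\mathcal B_f$-formalism from endomorphisms, where it records how a single map moves a single pair of tuples, to $m$-ary polymorphisms, where it must track how $g$ moves an $m$-tuple of pairs at once, and then excluding every way in which two coordinates of $g$ could genuinely interact --- together with the bookkeeping in the commuting-squares analysis, the interaction with the ``disjoint range'' alternative, and the relativisation to the parameters $\bar c$ --- is where the real work of the proof lies.
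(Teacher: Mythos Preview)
Your first two paragraphs set up a legitimate strategy: reducing to pp-definability of $R^{*}$ in $\bC'=({{\mathbb N}\choose k};S_0,\dots,S_{k-1})$, and canonising with parameters so as to keep a witness of failure. The observation at the end of paragraph~3 --- that a polymorphism of $\bC'$ whose image consists of pairwise disjoint sets would have to be constant, because it also preserves $E=S_{k-1}$ and the relation ``$E$ in every coordinate'' makes $({{\mathbb N}\choose k})^m$ connected and non-bipartite --- is correct and pleasant.

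But what you have written is not a proof; the substance is supposed to be in paragraph~3, and that paragraph is a sketch whose central step is simply absent. The sentence ``the four values of $g$ at the corners of such a square cannot be made mutually consistent unless the dependence on one of the two coordinates is spurious'' does not name any actual obstruction: $4$-cycles in $E$ exist abundantly in $\bJ(k)$ (for $k\ge2$ take $S\cup\{1,2\},S\cup\{1,3\},S\cup\{3,4\},S\cup\{2,4\}$ with $|S|=k-2$), so there is nothing inconsistent about $g(\bar a),g(\bar b),g(\bar c),g(\bar d)$ sitting on such a cycle. Your appeal to Lemma~\ref{lem:perm-emb} is an analogy, not an argument: that lemma and the whole ${\mathcal B}_f$-calculus of Section~\ref{sect:defequiv} are specific to \emph{unary} canonical maps on $\bJ^<(k)$ \emph{without parameters}, and you give no indication how the permutational/non-permutational dichotomy or Lemmas~\ref{lem:permutational}--\ref{lem:step} would lift to $m$-ary operations, let alone to canonicity over $(\bJ^<(k);\bar c)$. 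Your own fourth paragraph concedes that ``the real work of the proof lies'' exactly in the part you have not done.

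The paper's proof goes a quite different way and does not try to lift the unary calculus. It first reduces to \emph{binary} polymorphisms via the general fact (Theorem~\ref{essential_binary}) that for transitive $\omega$-categorical model-complete cores it suffices to rule out essential binary operations. It then reuses the pp-interpretation of $(\{0,1\};\nae)$ built in Theorem~\ref{thm:J-stbb}: any binary polymorphism must act as a projection on the two ``colours'' $E,N$, which after symmetry gives $f(O,S_{k-j})\subseteq S_{k-j}$ for $j\in\{1,2\}$. The heart of the argument is then purely combinatorial: an induction powered by the Sunflower Lemma (Lemma~\ref{lem:intersect}) upgrades this to $f(S_{k-1},{\neq})\subseteq S_{k-1}$, from which one extracts a well-defined map $\beta$ on $(k{-}1)$-subsets and concludes $f(u,v)=f(u,u)$ for all $u,v$. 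No Ramsey canonisation and no ${\mathcal B}_f$-type analysis is used.
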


    Note that Theorem~\ref{thm:J-coll} indeed implies Theorem~\ref{thm:J-stbb}; for an explicit reference see Corollary 6.1.21 in~\cite{Book}. However, in our approach we cannot use this implication since our proof of Theorem~\ref{thm:J-coll} relies on Theorem~\ref{thm:J-stbb}.

    As a consequence of Theorem~\ref{thm:J-coll} we also obtain the following strengthening of Theorem~\ref{thm:main}.

\begin{cor}\label{main_pp}
    For $k \in {\mathbb N}$, let $\bB$ be a first-order reduct of $\bJ(k)$ and let $\bC$ be the model-complete core of $\bB$. The one of the following holds.
\begin{enumerate}
\item $|C|=1$,
\item $\bC$ is first-order bi-definable with the infinite pure set,
\item $\bC$ is primitively positively bi-definable with $\bJ(\ell)$, for some $2\leq \ell \leq k$.
\end{enumerate}
\end{cor}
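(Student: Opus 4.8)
The plan is to take the model-complete core classification of Theorem~\ref{thm:main} as the starting point and to sharpen its conclusion from first-order bi-definability to primitive positive bi-definability in the only case where this does not come for free, using the polymorphism collapse of Theorem~\ref{thm:J-coll}. By Theorem~\ref{thm:main}, $\bC$ is first-order bi-definable with $\bJ(\ell)$ for some $\ell\le k$. If $\ell=0$, then $\bJ(0)$ has exactly one element, so $|C|=1$ and case~(1) holds. If $\ell=1$, then $\bJ(1)$ is the countably infinite clique, which is first-order interdefinable with $({\mathbb N};=)$ (because $\neq$ and $=$ are first-order definable from each other), so $\bC$ is first-order bi-definable with the infinite pure set and case~(2) holds. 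It remains to treat the case $\ell\ge 2$ and to show that then $\bC$ is primitively positively bi-definable with $\bJ(\ell)$.

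Assume $\ell\ge 2$. Since being a model-complete core and being primitively positively bi-definable with $\bJ(\ell)$ are both invariant under isomorphism, I may assume (after replacing $\bC$ by an isomorphic copy supplied by Theorem~\ref{thm:main}) that $\bC$ has domain ${{\mathbb N}\choose\ell}$ and is first-order \emph{interdefinable} with $\bJ(\ell)$; write $G:=\Aut(\bC)=\Aut(\bJ(\ell))$. As $\bC$ is $\omega$-categorical and a model-complete core, every endomorphism of $\bC$ preserves all first-order formulas, hence sends every finite tuple into its $G$-orbit and therefore lies in $\overline{G}$; since conversely $\End(\bC)$ is topologically closed and contains $G$, we obtain $\End(\bC)=\overline{G}$. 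By Remark~\ref{rem:main}, $\bJ(\ell)$ is also a model-complete core, so the same argument gives $\End(\bJ(\ell))=\overline{G}$ as well.

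Now I would invoke Theorem~\ref{thm:J-coll}, which applies to $\bC$ and to $\bJ(\ell)$ alike, each being a model-complete core first-order interdefinable with $\bJ(\ell)$ with $\ell\ge 2$: it yields that every polymorphism of $\bC$ and every polymorphism of $\bJ(\ell)$ is essentially unary. Since an essentially unary operation $(x_1,\dots,x_n)\mapsto\alpha(x_i)$ is a polymorphism of a structure $\bD$ if and only if its unary part $\alpha$ is an endomorphism of $\bD$, and since $\End(\bC)=\End(\bJ(\ell))=\overline{G}$, we get
$$\Pol(\bC)=\{(x_1,\dots,x_n)\mapsto\alpha(x_i)\mid n\ge 1,\ i\in[n],\ \alpha\in\overline{G}\}=\Pol(\bJ(\ell)).$$
Two $\omega$-categorical structures on a common domain with the same polymorphism clone are primitively positively interdefinable, because a relation is primitively positively definable in an $\omega$-categorical structure exactly when it is preserved by all of its polymorphisms~\cite{BodirskyNesetrilJLC}. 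Hence $\bC$ and $\bJ(\ell)$ are primitively positively interdefinable, and undoing the isomorphism fixed above shows that $\bC$ is primitively positively bi-definable with $\bJ(\ell)$, which is case~(3). (The three cases are moreover mutually exclusive, as the automorphism groups of the one-element structure, of the infinite pure set, and of $\bJ(\ell)$ with $\ell\ge 2$ are pairwise non-isomorphic as permutation groups.)

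The argument thus reduces the statement to Theorem~\ref{thm:J-coll}, and that is where the real work lies: proving that the polymorphism clone of a model-complete core interdefinable with $\bJ(\ell)$ collapses to essentially unary operations. The only subsidiary point needing attention is that one must know that $\bJ(\ell)$ itself — not merely the core $\bC$ — is a model-complete core, so that Theorem~\ref{thm:J-coll} is applicable to it directly; this is precisely the fact recorded in Remark~\ref{rem:main} (via Theorem~\ref{thm:ppdef}).
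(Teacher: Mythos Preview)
Your proof is correct and follows essentially the same route as the paper: invoke Theorem~\ref{thm:main}, dispose of $\ell\in\{0,1\}$, and for $\ell\ge 2$ use Theorem~\ref{thm:J-coll} together with the Pol--Inv Galois connection for $\omega$-categorical structures to upgrade first-order interdefinability to primitive positive interdefinability. Your write-up is actually more careful than the paper's in spelling out why $\Pol(\bC)=\Pol(\bJ(\ell))$ (equal endomorphism monoids plus essential unarity on both sides), including the observation that $\bJ(\ell)$ itself must be a model-complete core so that Theorem~\ref{thm:J-coll} applies to it.
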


\begin{proof}
    By Theorem~\ref{thm:main} we know that $\bC$ is first-order bi-definable with $\bJ(\ell)$, for some $\ell \leq k$. Thus, if $\ell\in \{0,1\}$ then there is nothing to prove, otherwise by Theorem~\ref{thm:J-coll} we know that up to renaming the elements of $C$ the structures $\bC$ and $\bJ(\ell)$ have the same set of polymorphisms. The statement then follows from the fact that a relation is pp-definable in an $\omega$-categorical structure if and only if it is preserved by all of its polymorphisms~\cite{BodirskyNesetrilJLC}.
\end{proof}

    In order to show~\ref{thm:J-coll} we use following result.

\begin{thmC}[\cite{marimon2024minimal}, Corollary 3.8]
\label{essential_binary}
    Let $\bA$ be a transitive $\omega$-categorical model-complete core all of whose binary polymorphisms are essentially unary. Then all polymorphisms of $\bA$ are essentially unary.
\end{thmC}

    The next lemma follows directly from the proof of Theorem~\ref{thm:J-stbb}.

\begin{lem}\label{candidate}
	Let $\bC$ be a model-complete core with $\Aut(\bC)=\Aut(\bJ(k))$ for some $k\geq 2$. Then for every binary polymorphism $f$ of $\bC$ we have
\begin{itemize}
\item $f(O,S_{k-j})\subseteq S_{k-j}$ for $j\in \{1,2\}$; or
\item $f(S_{k-j},O)\subseteq S_{k-j}$ for $j\in \{1,2\}$
\end{itemize}
    where $O=S_{k-1}\cup S_{k-2}$.
\end{lem}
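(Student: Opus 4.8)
The plan is to feed a binary polymorphism of $\bC$ through the primitive positive interpretation $I$ of $\bD := (\{0,1\};\nae)$ in $\bC$ that is constructed in the proof of Theorem~\ref{thm:J-stbb}. Since $\Aut(\bC)=\Aut(\bJ(k))$, Corollary~\ref{cor:interdef} shows that $\bC$ is first-order interdefinable with $\bJ(k)$, so that proof applies verbatim to $\bC$; in particular it establishes that the relations $E=S_{k-1}$, $N=S_{k-2}$, the domain relation $O=E\cup N$ of $I$, the $4$-ary kernel relation $I^{-1}(=_{\{0,1\}})$ (namely the relation $\psi_{\lceil k/2\rceil-1}$), and the $6$-ary relation $I^{-1}(\nae)$ are all primitively positively definable in $\bC$. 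Hence every polymorphism of $\bC$ preserves all of these relations.

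Now let $f$ be a binary polymorphism of $\bC$. First I would define an induced binary operation $\bar f$ on $\{0,1\}$: for $u,v\in\{0,1\}$ choose pairs $(a_1,a_2),(b_1,b_2)\in O$ with $I(a_1,a_2)=u$ and $I(b_1,b_2)=v$ (possible because $E$ and $N$ are non-empty, which is where $k\geq 2$ enters), and set $\bar f(u,v):=I\big(f(a_1,b_1),f(a_2,b_2)\big)$. Preservation of $O$ makes the right-hand side meaningful, preservation of the $4$-ary relation $I^{-1}(=_{\{0,1\}})$ makes $\bar f$ well-defined (independent of the chosen representatives), and preservation of $I^{-1}(\nae)$ shows that $\bar f\in\Pol(\bD)$. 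Moreover, taking the representatives inside $E$ respectively $N$ and using that $f$ preserves $E$ and $N$ gives $\bar f(1,1)=1$ and $\bar f(0,0)=0$, so $\bar f$ is idempotent.

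Next I would invoke the classical fact that the only idempotent polymorphisms of $\bD=(\{0,1\};\nae)$ are the two binary projections (this follows, for instance, from $\bD$ being a finite core with $\Csp(\bD)$ NP-complete by Schaefer's theorem, or by direct inspection of Post's lattice). Hence $\bar f\in\{\pi_1^2,\pi_2^2\}$. Unwinding the definitions of $\bar f$ and of $I$ then finishes the proof: if $\bar f=\pi_2^2$, then for every $(a_1,a_2)\in O$ and every $(b_1,b_2)\in S_{k-j}$ with $j\in\{1,2\}$ (so $(b_1,b_2)\in O$, and its $I$-value records whether $j=1$ or $j=2$), the pair $(f(a_1,b_1),f(a_2,b_2))$ lies in $O$ and has the same $I$-value as $(b_1,b_2)$, hence lies in $S_{k-j}$; that is, $f(O,S_{k-j})\subseteq S_{k-j}$ for $j\in\{1,2\}$. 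Symmetrically, $\bar f=\pi_1^2$ yields $f(S_{k-j},O)\subseteq S_{k-j}$ for $j\in\{1,2\}$, which is the other alternative in the statement.

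The only genuine work is the bookkeeping in the middle step — verifying well-definedness of $\bar f$ and that it preserves $\nae$ — and this reduces entirely to the primitive positive definability of $O$, $I^{-1}(=_{\{0,1\}})$ and $I^{-1}(\nae)$ in $\bC$, which is exactly what the proof of Theorem~\ref{thm:J-stbb} already delivers; this is the sense in which the lemma ``follows directly from the proof of Theorem~\ref{thm:J-stbb}''. Everything else is the standard passage of a polymorphism through a primitive positive interpretation, together with the elementary classification of the idempotent polymorphisms of $\nae$.
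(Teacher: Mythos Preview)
Your proposal is correct and follows essentially the same route as the paper: pass the polymorphism through the pp-interpretation $I$ of $(\{0,1\};\nae)$ in $\bC$ built in Theorem~\ref{thm:J-stbb}, observe that the induced operation on $\{0,1\}$ preserves $\nae$ and the constants $0,1$ (since $S_{k-1}$ and $S_{k-2}$ are pp-definable orbits), and conclude it is a projection. The paper phrases idempotence as ``preserves the constants $\{0\}$ and $\{1\}$'' and appeals to $\Pol(\{0,1\};\nae,\{0\},\{1\})$ being the projection clone, but this is exactly your argument with slightly different wording.
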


\begin{proof}
    We know that the map $I \colon O \to \{0,1\}$ defined by  
$$I(x,y) := \begin{cases} 1 & \text{if } S_{k-2}(x,y) \\
 0 & \text{otherwise} 
 \end{cases}$$
	gives a pp-interpretation of $(\{0,1\}; \nae)$. This implies that the polymorphisms of $\bC$ act on $\{0,1\}$ in a natural way and this action preserves the relation $\nae$. Since $S_{k-2}$ and $S_{k-1}$ are 2-orbits of $\bC$, in particular they are pp-definable in $\bC$. This means that the aforementioned action also preserves the constants 0 and 1. Since all polymorphisms of $(\{0,1\}; \nae,\{0\},\{1\})$ are projections the statement of the lemma follows.
\end{proof}

\begin{proof}[Proof of Theorem~\ref{thm:J-coll}]
    By Theorem~\ref{essential_binary} it is enough to check that all binary polymorphisms of $\bC$ are essentially unary. Let $f$ be such a polymorphism. By applying Lemma~\ref{candidate} and by flipping the arguments of $f$ if necessary we can assume that $f(O,S_{k-j})\subseteq S_{k-j}$ for $j\in \{1,2\}$. We show that $f(x_1,x_2)=\alpha(x_1)$ for some $\alpha\in \End(\bC)$.

	{\bf Claim 1.} $f(S_{k-1},\neq)\subseteq S_{k-1}$.

We show by induction on $\ell$ that $$f(S_{k-1},S_{\geq k-\ell}\,\cap\neq)\subseteq S_{k-1}.$$ For $\ell\in \{1,2\}$ the statement follows from our assumption on $f$. Now let us assume that $\ell\geq 3$ and that the statement holds for $\ell-2$. Consider some elements $u_1,u_2,v_1,v_2\in {{\mathbb N} \choose k}$ with $|u_1\cap u_2|=k-1$ and $|v_1\cap v_2|=k-\ell$. We need to show that $|f(u_1,v_1)\cap f(u_2,v_2)|=k-1$.
Let $n\coloneqq k^2-k+4=(k(k-1)+2)+2$ and let us pick some elements $u_3,\dots,u_n,v_3,\dots,v_n\in {{\mathbb N} \choose k}$ such that
\begin{itemize}
\item $u_1,\dots,u_n$ pairwise intersect each other in $u_1\cap u_2$ and
\item $v_3,\dots,v_n$ pairwise intersect each other in $v_1\cap v_2\cup \{a,b\}$ for some $a\in v_1\setminus v_2$ and $b\in v_2\setminus v_1$.
\end{itemize}

	Then for all $3\leq i<j\leq n$ we have $|u_i\cap u_j|=k-1$ and $|v_i\cap v_j|=k-\ell+2$. Therefore, by the induction hypothesis we can conclude that $|f(u_i,v_i)\cap f(u_j,v_j)|=k-1$ for all $3\leq i<j\leq n+2$. By Lemma~\ref{lem:intersect} we obtain that the elements $f(u_i,v_i): 3\leq i\leq n$ pairwise intersect each other in some $(k-1)$-element subset $r$. Now let us observe that $|u_1\cap u_i|=k-1$ and $|v_1\cap v_j|\geq k-\ell+1$ also holds for all $3\leq i\leq n$. Thus, by repeating the argument above it follows that $f(u_1,v_1)$ also must intersect $f(u_i,v_i)$  in $r$, for each $i \in \{3,\dots,n\}$. By a similar argument we obtain that $r\subseteq f(u_2,v_2)$. Therefore, either $f(u_1,v_1)\cap f(u_2,v_2)=r$ or $f(u_1,v_1)=f(u_2,v_2)$. However, we have seen in the proof of Theorem~\ref{thm:J-stbb} that the relation $\neq$ is pp-definable from the relations $S_0$ and $S_1$, thus it is also definable in $\bC$. Since polymorphisms preserves pp-definable relations it follows that $f(u_1,v_1)\neq f(u_2,v_2)$. Therefore $f(u_1,v_1)\cap f(u_2,v_2)=r$, and thus $|f(u_1,v_1)\cap f(u_2,v_2)|=k-1$.

	{\bf Claim 2.}
	For all $r\in {\mathbb{N}\choose k-1}$ there exists a unique $\beta(r) \in {\mathbb{N}\choose k-1}$ such that if $r\subset u\in {\mathbb{N}\choose k}$ then $\beta(r)\subset f(u,v)$ for all $v\in {\mathbb{N}\choose k}$.
	
	Let $n\coloneqq k^2+2=(k(k-1)+2)+k$ and let us consider some subsets $u_1,\dots,u_n,v_1,\dots,v_n\in {{\mathbb N} \choose k}$ such that
\begin{itemize}
\item the pairwise intersection of $u_1,\dots,u_n$ is $r$, and
\item $v_1,\dots,v_n$ are pairwise disjoint.
\end{itemize}

	By Claim 1 we know that $|f(u_i,v_i)\cap f(u_j,v_j)|=k-1$ for all $1\leq i<j\leq n$, and therefore by Lemma~\ref{lem:intersect} we know that the elements $f(u_i,v_i): 1\leq i\leq n$ pairwise intersect each other in some $(k-1)$-element subset $\beta(r)$. We show that this choice of $\beta(r)$ suffices. Arbitrarily choose $u=u_0,v=v_0\in {\mathbb{N}\choose k}$  such that $r\subset u$. By our construction it follows that there exists at most one index $i\geq 1$ such that $u_i=u$ and at most $k$ indices $j\geq 1$ such that $v_j\cap v\neq \emptyset$. Therefore, there exists some $I\subset [n]$ of size at least $k^2-k+1$ such that 
	\begin{itemize}
	\item the pairwise intersection of the sets $u_i$, for $i\in I\cup \{0\}$, is $r$ and
	\item the sets $v_i$, for $i\in I\cup \{0\}$,  are pairwise disjoint. 
	\end{itemize} 
By Claim 1 it follows that the pairwise intersection of the elements $f(u_i,v_i)$, for $i\in I\cup \{0\}$, is of size $k-1$. Now by using Lemma~\ref{lem:intersect} again we obtain that there exists a $(k-1)$-element subset $s$ such that $s\subset f(u_i,v_i)$ for all $i\in I\cup \{0\}$. This is only possible if $s=\beta(r)$.
	
	{\bf Claim 3.} Let $\beta$ be as in the conclusion of Claim 2. Then for all $r,s\in {\mathbb{N}\choose k-1}$ such that $|r\cup s|=k$ we have that $|\beta(r)\cup \beta(s)|=k$. 
	
	Let $u\coloneqq r\cup s$. Then by definition $\beta(r)\cup \beta(s)\subseteq f(u,u)$, and thus $|\beta(r)\cup \beta(s)|\leq k$. Pick some elements $v,w\in {\mathbb{N}\choose k}$ such that $u\cap v=r$ and $u\cap w=s$. Note that in this case $|v\cap w|=k-2$. Then $\beta(r)\subseteq f(v,v)$ and $\beta(s)\subset f(w,w)$. Considering that $f$ preserves $S_{k-2}$ is follows that $|f(v,v)\cap f(w,w)|=k-2$. This implies that $\beta(r)\neq \beta(s)$, and therefore $|\beta(r)\cup \beta(s)|=k$.
	
	Now let $u,v\in {\mathbb{N}\choose k}$ be arbitrary, and let $r,s \in {\mathbb{N}\choose k-1}$ be distinct subsets of $u$. Then by definition we have $\beta(r)\cup \beta(s)\subseteq f(u,v),f(u,u)$. By Claim 3 we know that $|\beta(r)\cup \beta(s)|=k$ which means that in fact $f(u,v)=f(u,u)$.
	Let $\alpha \in \End(\bC)$ be the map $u\mapsto f(u,u)$. 
	Then $f(x_1,x_2)=\alpha(x_1)$ holds for all $x_1,x_2 \in A$, which concludes the proof that $\Pol(\bC)$ is essentially unary.    
\end{proof}

\begin{rem}
	The argument presented in our proof of Theorem~\ref{thm:J-coll} can also be generalized to polymorphisms of higher arity. This way one can avoid the reliance on Theorem~\ref{essential_binary} in the proof.
\end{rem}

\section{Conclusion and Open Problems}
We verified the infinite-domain CSP dichotomy conjecture for all first-order reducts of the Johnson graph $\bJ(k)$, for all $k \in {\mathbb N}$. This result can be seen as a step of a bigger program to verifying the tractability conjecture for reducts of finitely bounded homogeneous structures along model-theoretic notions of tameness.
An important next step would be the verification of this conjecture for the class of all structures $\bB$ with a first-order interpretation in $({\mathbb N};=)$ (see Figure~\ref{fig:classes}). This class includes $\bJ(k)$ and its first-order reducts, but also includes all finite structures and more generally cellular structures (which is class of structures with at most $cn^{dn}$ many orbits of $n$-tuples, for some constants $c,d$~\cite{BodirskyBodor}), 
and even more generally hereditarily cellular structures~\cite{Lachlan-Tree-Decomp,Braunfeld-Monadic-Stab,Bodor24}. 
For these structures, the tractability conjecture has already been verified~\cite{Bodordiss}. 


A natural continuation of the present work would be the classification of the complexity of the CSP for all structures interpretable in $({\mathbb N};=)$ with a \emph{primitive} automorphism group, and then more generally with a \emph{transitive} automorphism group. Note that $\bJ(k)$ and all its first-order reducts are examples of such structures with a primitive automorphism group, but there are more.

\begin{exa}
Let $\bA$ be the structure with domain ${{\mathbb N}^{2}}$ and the relation $$D := \{((u_1,u_2),(v_1,v_2)) \mid u_1=v_1 \vee u_2 = v_2\}.$$
Clearly, this structure has a first-order interpretation in 
$({\mathbb N};=)$. 
Note that its automorphism group is this structure is a primitive action of the wreath product $\Sym({\mathbb N}) \Wr \Sym(\{1,2\})$.
%
\end{exa} 

Another interesting direction is to study 
the action of other automorphism groups
$\Aut(\bA)$ 
on the set of $n$-element subsets of $A$,  for some $n \in {\mathbb N}$, and the respective first-order reducts. Note that any classification of such reducts contains the structures studied in the present article as a subcase, and we believe that our classification will be a useful point of departure in such an endeavour.  

Quite remarkably, `almost all' first-order reducts of $\bJ(k)$ are NP-hard, in a sense that we now make precise. 
We call a homogeneous relational structure $\bB$ \emph{stubborn} if every model-complete core structure $\bC$ which is first-order  interdefinable with $\bB$ primitively positively constructs all finite structures. 
Note that Theorem~\ref{thm:J-stbb} shows that
for any $k \geq 2$, the structure $\bJ(k)$ is stubborn. Other examples of stubborn structures are 
the three non-trivial first-order reducts of $({\mathbb Q};<)$~\cite{tcsps-journal}, 
the countable universal local order  $S(2)$~\cite{BodirskyNesetrilJLC}, and 
the Fra\"{i}ss\'e-limit of Boron trees (whose CSP is the phylogenetic reconstruction problem for the quartet relation;~\cite{Phylo-Complexity}).
Is it true that the model-complete core of a first-order reduct of a stubborn structure is 
again stubborn, or interdefinable with $({\mathbb N};=)$ (which clearly is not stubborn)?

Note that in Section~\ref{sect:collapse} we also proved a stronger version of stubbornness. Following~\cite{marimon2024minimal}, we say that a permutation group $G$ is \emph{collapsing} if the only clone whose unary operations are exactly $\overline G$ is the one generated by $\overline G$. Theorem~\ref{thm:J-coll} shows that $\Aut(\bJ(k))$ is collapsing for every $k\geq 2$. Note that every structure having a collapsing automorphism group is stubborn (see the discussion at the beginning of Section~\ref{sect:collapse}). Many finite groups are known to be collapsing~\cite{palfy1982unary,HaddadRosenberg,kearnes2001collapsing}, but we only know about very few examples for infinite oligomorphic groups which are collapsing (see~\cite{marimon2024minimal}). In particular none of the stubborn structures listed in the previous paragraph (apart from $\bJ(k)$) have collapsing automorphism groups. 

\bibliographystyle{alphaurl}
\bibliography{global.bib}

@STRING{LICS = {Proceedings of the Annual Symposium on Logic in Computer Science (LICS)} }

@STRING{STOC = {Proceedings of the Annual Symposium on Theory of Computing (STOC)} }

@STRING{STACS = {Proceedings of  the Symposium on Theoretical Aspects of Computer Science (STACS) } }

@STRING{ICALP = {Proceedings of the International Colloquium on Automata, Languages and Programming (ICALP)} }

@preamble{"\def\cprime{$'$} "}

@article{MottetPinskerCores,
  author    = {Antoine Mottet and
               Michael Pinsker},
  title     = {Cores over {Ramsey} structures},
  journal   = {Journal of Symbolic Logic},
  volume    =   86,
  number=1,
  pages={352-361},
  year=2021
}

@inproceedings{MottetPinskerSmoothConf,
  author    = {Antoine Mottet and
               Michael Pinsker},
title = {Smooth Approximations and {CSP}s over finitely bounded homogeneous structures},
note = {Preprint arXiv:2011.03978},
year = {2022},
booktitle = {Proceedings of the Symposium on Logic in Computer Science (LICS)}
}

@article{Topo,
author = {Libor Barto and Michael Pinsker},
title={Topology is irrelevant},
journal={SIAM Journal on Computing},
volume=49,
number=2,
pages={365-393},
year=2020
}

@inproceedings{BG-Sandwich,
  author       = {Manuel Bodirsky and
                  Santiago Guzm{\'{a}}n{-}Pro},
  editor       = {Kasper Green Larsen and
                  Barna Saha},
  title        = {A {CSP} approach to Graph Sandwich Problems},
  booktitle    = {Proceedings of the 2026 Annual {ACM-SIAM} Symposium on Discrete Algorithms,
                  {SODA} 2026, Vancouver, BC, Canada, January 11-14, 2026},
  pages        = {2403--2418},
  publisher    = {{SIAM}},
  year         = {2026},
  url          = {https://doi.org/10.1137/1.9781611978971.85},
  doi          = {10.1137/1.9781611978971.85},
  timestamp    = {Thu, 19 Feb 2026 16:57:53 +0100},
  biburl       = {https://dblp.org/rec/conf/soda/BodirskyG26.bib},
  bibsource    = {dblp computer science bibliography, https://dblp.org}
}

@article{Zhuk20,
  author    = {Dmitriy Zhuk},
  title     = {A Proof of the {CSP} Dichotomy Conjecture},
  journal   = {J. {ACM}},
  volume    = {67},
  number    = {5},
  pages     = {30:1--30:78},
  year      = {2020},
  url       = {https://doi.org/10.1145/3402029},
  doi       = {10.1145/3402029}
}

@inproceedings{ZhukFVConjecture,
  author    = {Dmitriy N. Zhuk},
  title     = {A Proof of {CSP} Dichotomy Conjecture},
  booktitle = {58th {IEEE} Annual Symposium on Foundations of Computer Science, {FOCS}
               2017, {B}erkeley, {CA}, {USA}, {O}ctober 15-17},
  pages     = {331-342},
  year      = {2017},
  note = {https://arxiv.org/abs/1704.01914.}
}

@inproceedings{BulatovFVConjecture,
  author    = {Andrei A. Bulatov},
  title     = {A Dichotomy Theorem for Nonuniform {CSP}s},
  booktitle = {58th {IEEE} Annual Symposium on Foundations of Computer Science, {FOCS}
               2017, {B}erkeley, {CA}, {USA}, {O}ctober 15-17},
  pages     = {319-330},
  year      = {2017}
}

@preamble{
   "\def\cprime{$'$} "
}

@article{BKOPP-equations,
author={Libor Barto and Michael Kompatscher and Miroslav Ol\v{s}\'{a}k and Trung Van Pham and Michael Pinsker},
title={Equations in oligomorphic clones and the constraint satisfaction problem for $\omega$-categorical structures},
journal={Journal of Mathematical Logic},
volume=19,
number=2, 
pages={\#1950010},
year=2019
}

@Misc{BodorDiss,
	author = {Bertalan Bodor},
	year = 2022,
	title = {{CSP} dichotomy for $\omega$-categorical monadically stable structures},
	note = {PhD dissertation, Institute of Algebra}, 	
	howpublished = {TU Dresden, https://nbn-resolving.org/urn:nbn:de:bsz:14-qucosa2-774379}
}

@incollection{Pol,
  author    = {Libor Barto and
               Andrei A. Krokhin and
               Ross Willard},
  title     = {Polymorphisms, and How to Use Them},
  booktitle = {The Constraint Satisfaction Problem: Complexity and Approximability},
  publisher = {Schloss Dagstuhl - Leibniz-Zentrum fuer Informatik},
  pages     = {1-44},
  year      = {2017}
}

@Article{wonderland,
author={Libor Barto and Jakub Opr\v{s}al and Michael Pinsker},
title={The wonderland of reflections},
journal = {Israel Journal of Mathematics},
volume=223,
number=1,
year=2018,
pages={363-398}
}

@article{HodgesHodkinsonLascarShelah,
author = {Wilfrid Hodges and Ian Hodkinson and  Daniel Lascar and Saharon Shelah},
title = {The Small Index Property for $\omega$-Stable $\omega$-Categorical Structures and for the Random Graph},
journal = {Journal of the London Mathematical Society},
volume = {S2-48},
year = {1993}, 
number = {2}, 
pages = {204-218}}

@article{Cyclic,
author = {Libor Barto and Marcin Kozik}, 
title = {Absorbing subalgebras, cyclic terms and the constraint satisfaction problem}, 
journal = {Logical Methods in Computer Science},
volume = {8/1},
number = {07},
year = {2012}, 
pages = {1-26}}

@article{Thomas96,
  author    = {Simon Thomas},
  title     = {Reducts of Random Hypergraphs},
  journal   = {Annals of Pure and Applied Logic},
  volume    = {80},
  number    = {2},
  year      = {1996},
  pages     = {165-193},
  bibsource = {DBLP, http://dblp.uni-trier.de}
}

@article{Lachlan-Tree-Decomp,
  author    = {Alistair H. Lachlan},
  title     = {$\aleph_0$-Categorical Tree-Decomposable Structures},
  journal   = {Journal of Symbolic Logic},
  volume    = {57},
  number    = {2},
  pages     = {501--514},
  year      = {1992},
  url       = {https://doi.org/10.2307/2275284},
  doi       = {10.2307/2275284}
}

@Article{CherlinHarringtonLachlan,
author={G. Cherlin and L. Harrington and  Alistair  H. Lachlan},
title={$\Aleph_0$-categorical, $\Aleph_0$-stable Structures},
journal = {Annals of Pure and Applied Logic},
pages = {103-135},
volume = {28},
year = {1985}}

@Article{HrushovskiTotallyCategorical,
title = {Totally categorical structures},
author = {Ehud Hrushovski},
journal = {Transactions of the American Mathematical Society},
volume = {313},
year = {1989},
number = {1},
pages = {131-159}}

@article{Bodor24,
  author       = {Bertalan Bodor},
  title        = {Classification of $\omega$-Categorical Monadically stable Structures},
  journal      = {Journal of Symbolic Logic},
  volume       = {89},
  number       = {2},
  pages        = {460--495},
  year         = {2024},
  url          = {https://doi.org/10.1017/jsl.2023.66},
  doi          = {10.1017/JSL.2023.66},
  timestamp    = {Sun, 14 Jul 2024 21:04:55 +0200},
  biburl       = {https://dblp.org/rec/journals/jsyml/Bodor24.bib},
  bibsource    = {dblp computer science bibliography, https://dblp.org}
}

@Article{Braunfeld-Monadic-Stab,
author = {Samuel Braunfeld},
journal = {Proceedings of the London Mathematical Society, Series B},
volume = {124},
number = {3},  
year = {2022}, 
pages = {373-386}, 
title = {Monadic stability and growth rates of $\omega$-categorical structures},
note = {Preprint arXiv:1910.04380}}

@Article{Topo-Dynamics,
author = {Alexander Kechris and Vladimir Pestov and Stevo Todor\v{c}evi\'c},
title = {Fra\"{i}ss\'e Limits, {R}amsey Theory, and topological dynamics of automorphism groups},
journal = {Geometric and Functional Analysis},
volume = {15},
number = {1},
pages = {106-189},
year = {2005}}

@Book{DixonMortimer,
author = {John Dixon and Brian Mortimer}, 
title = {Permutation Groups}, 
publisher = {Springer}, 
address = {New York}, 
year = {1996}}

@article{DixonNeumannThomas,
author = {John Dixon and Peter M. Neumann and Simon Thomas},
title = {Subgroups of small index in infinite symmetric groups},
journal = {Bulletin of the London Mathematical Society},
volume = {18},
year = {1986}, 
number = {6}, 
pages = {580-586}}

@BOOK{Hodges,
author = {Wilfrid Hodges},
title = {A shorter model theory},
publisher = {Cambridge University Press},
address = {Cambridge},
year = {1997}}

@BOOK{Oligo,
author = {Peter J. Cameron},
title = {Oligomorphic permutation groups},
publisher = {Cambridge University Press},
address = {Cambridge},
year = {1990}}

@Article{LachlanIndiscernible,
author = {Alistair H. Lachlan},
title = {Structures Coordinatized by Indiscernible Sets},
journal = {Annals of Pure and Applied Logic},
volume = {34},
pages = {245-273},
year = {1987}}

@Article{CherlinLachlan,
author = {Gregory Cherlin and Alistair H. Lachlan},
year = {1986},
title = {Stable finitely homogeneous structures},
journal = {TAMS},
volume = {296}, 
pages = {815-850}}

@Article{Neumann,
author = {W. D. Neumann},
title = {On {M}al'cev conditions}, 
journal = {J. Aus. Math. Soc.},
volume = {17},
number = {3}, 
year = {1974}}

@Article{FederVardi,
  author = 	 {Tom\'as Feder and Moshe Y. Vardi},
  title = 	 {The computational structure of monotone monadic {SNP} and constraint satisfaction: {a} study through {D}atalog and group theory},
  journal = 	 {{SIAM} Journal on Computing},
  year = 	 {1999},
  volume =	 {28},
  number = {1}, 
  pages =	 {57-104},
}

@article{Ram:On-a-problem,
	Author = {Ramsey, Frank Plumpton},
	Date-Added = {2011-02-24 11:17:43 +0100},
	Date-Modified = {2011-02-24 11:23:22 +0100},
	Journal = {Proceedings of the LMS (2)},
	Number = {1},
	Pages = {264-286},
	Title = {On a problem of formal logic},
	Volume = {30},
	Year = {1930}}

@ARTICLE{HaddadRosenberg,
author={Lucien Haddad and Ivo G. Rosenberg},
title={Finite clones containing all permutations},
journal={Canadian Journal of Mathematics},
volume=46,
number=5,
year=1994,
pages="951-970",
}

@Article{RydvalDescr,
  author    = {Manuel Bodirsky and
               Jakub Rydval}, 
  title     = {On the Descriptive Complexity of Temporal Constraint Satisfaction Problems},
  journal = {J. ACM},
  volume = {70},
  number = {1},
  pages = {2:1-2:58},
  year = {2023}, 
  note = {A conference version of this article appear under the title ``Temporal Constraint Satisfaction Problems in Fixed-Point Logic'' in LICS'20}
}

@article{BodirskyBodor,
	author = {Manuel Bodirsky and Bertalan Bodor},
	year = 2021,
	title = {Structures with Small Orbit Growth},
	journal = {Journal of Group Theory},
	volume = {24},
	number = {4}, 
	pages = {643-709},
	publisher = {de Gruyter}, 
	note = {Preprint available at ArXiv:1810.05657}
}

@article{BodirskyBodorUIPJournal, author = {Bodirsky, Manuel and Bodor, Bertalan}, title = {A Complexity Dichotomy in Spatial Reasoning via {R}amsey Theory}, year = {2024}, publisher = {Association for Computing Machinery}, address = {New York, NY, USA}, issn = {1942-3454}, url = {https://doi.org/10.1145/3649445}, doi = {10.1145/3649445}, journal = {ACM Trans. Comput. Theory},  
volume = 16,
number = 2,
pages = {10:1-10:39}
}

@inproceedings{BodirskyKnaeuerStarke,
  author    = {Manuel Bodirsky and
               Simon Kn{\"{a}}uer and
               Florian Starke},
  editor    = {Marcella Anselmo and
               Gianluca Della Vedova and
               Florin Manea and
               Arno Pauly},
  title     = {{ASNP:} {A} Tame Fragment of Existential Second-Order Logic},
  booktitle = {Beyond the Horizon of Computability - 16th Conference on Computability
               in Europe, CiE 2020, Fisciano, Italy, June 29 - July 3, 2020, Proceedings},
  series    = {Lecture Notes in Computer Science},
  volume    = {12098},
  pages     = {149--162},
  publisher = {Springer},
  year      = {2020},
  url       = {https://doi.org/10.1007/978-3-030-51466-2\_13},
  doi       = {10.1007/978-3-030-51466-2\_13},
  timestamp = {Wed, 24 Jun 2020 15:52:16 +0200},
  biburl    = {https://dblp.org/rec/conf/cie/BodirskyKS20.bib},
  bibsource = {dblp computer science bibliography, https://dblp.org}
}

@Article{Qualitative-Survey,
  author    = {Manuel Bodirsky and
                Peter Jonsson},
   title     = {A model-theoretic view on qualitative constraint reasoning},
   journal = {Journal of Artificial Intelligence Research}, 
   volume = {58},
   number = {1}, 
   pages = {339-385}, 
   year      = {2017}}

@Article{Phylo-Complexity,
  author =	{Manuel Bodirsky and Peter Jonsson and Trung Van Pham},
  title =	{{The Complexity of Phylogeny Constraint Satisfaction Problems}},
  note =	{An extended abstract appeared in the conference STACS 2016},
  journal = {ACM Transactions on Computational Logic (TOCL)}, 
  volume = {18},
  pages = {20:1--20:13}, 
    doi       = {10.4230/LIPIcs.STACS.2016.20},
  number = {3}, 
  year = {2017}
}

@article{BPT-decidability-of-definability,
  author = {Manuel Bodirsky and Michael Pinsker and Todor Tsankov},
  title = {Decidability of definability},
  journal = {Journal of Symbolic Logic},
  volume=78,
  number=4,
  pages={1036-1054},
  year = {2013},
  note={A conference version appeared in the Proceedings of
the Twenty-Sixth Annual IEEE Symposium on. Logic in Computer Science (LICS 2011), pages {321-328}}
}

@article{BodMot-Unary,
  author    = {Manuel Bodirsky and
               Antoine Mottet},
  title     = {A Dichotomy for First-Order Reducts of Unary Structures},
  journal   = {Logical Methods in Computer Science},
  volume    = {14},
  number    = {2},
  pages =  {1-31}, 
  year      = {2018},
  url       = {https://doi.org/10.23638/LMCS-14(2:13)2018}
}

@article{BodHilsMartin-Journal,
  author = {Manuel Bodirsky and Martin Hils and Barnaby Martin},
  title = {On the scope of the universal-algebraic approach to constraint satisfaction},
  journal = {Logical Methods in Computer Science (LMCS)},
  note = {An extended abstract that announced some of the results appeared in the proceedings of Logic in Computer Science (LICS'10)},
  year = {2012},
  pages = {1-30}, 
  volume = {8},
  number = {3}
  }

@article{BP-reductsRamsey,
  author = {Manuel Bodirsky and Michael Pinsker},
  title = {Reducts of {R}amsey Structures},
  journal = {AMS Contemporary Mathematics (Model Theoretic Methods in
Finite Combinatorics)},
volume = {558}, 
  publisher = {American Mathematical Society},
 pages = {489-519},
  year = {2011}
}

@Article{tcsps-journal,
author = {Manuel Bodirsky and Jan K\'ara},
title = {The Complexity of Temporal Constraint Satisfaction Problems},
journal = {Journal of the ACM},
volume = {57},
number = {2},
pages = {1-41},
  doi       = {10.1145/1667053.1667058},
note = {An extended abstract appeared in the Proceedings of the Symposium on Theory of Computing (STOC)}, 
year = {2009}}

@Article{Cores-journal,
   author =       "Manuel Bodirsky",
   title = "Cores of countably categorical structures",
   journal = "Logical Methods in Computer Science ({LMCS})",
   volume    = {3},
   pages = {1-16},
  number    = {1},
   year = {2007}}

@Article{ecsps,
  author = 	 {Manuel Bodirsky and Jan K\'ara},
  title = 	 {The Complexity of Equality Constraint Languages},
  journal = {Theory of Computing Systems},
  volume = {3},
  number = {2}, 
    doi       = {10.1007/s00224-007-9083-9},
  pages = {136-158},
  note = {A conference version appeared in the proceedings of Computer Science Russia {(CSR'06)}},
  year = 	 {2008},
}

@article{BodJunker,
  author    = {Manuel Bodirsky and
               Markus Junker},
  title     = {$\Aleph_0$-categorical structures: interpretations and endomorphisms},
  journal   = {Algebra Universalis},
  volume = {64}, 
  number = {3-4}, 
  pages = {403-417},
  year      = {2011}
}

@Article{BodirskyNesetrilJLC,
author = {Manuel Bodirsky and Jaroslav Ne\v{s}et\v{r}il},
title  = {Constraint Satisfaction with Countable Homogeneous Templates},
journal = {Journal of Logic and Computation},
volume = {16},
number = {3},
doi       = {10.1093/logcom/exi083},
pages = {359-373},
year   = {2006}}

@Book{Book,
author = {Manuel Bodirsky},
title  = {Complexity of Infinite-Domain Constraint Satisfaction},
year   = {2021},
doi = {10.1017/9781107337534}, 
address = {Cambridge, United Kingdom; New York, NY}, 
publisher = {Cambridge University Press},
series = {Lecture Notes in Logic (52)}}

@article{BPP-projective-homomorphisms,
author={Manuel Bodirsky and Michael Pinsker and Andr\'{a}s Pongr\'acz},
title={Projective clone homomorphisms},
journal={Journal of Symbolic Logic},
volume= 86,
number=1,
pages={148-161},
year= 2021
}

@article{BP-canonical,
author={Manuel Bodirsky and Michael Pinsker},
title={Canonical functions: a proof via topological dynamics},
journal={Homogeneous Structures, A Workshop in Honour of Norbert Sauer's 70th Birthday, Contributions to Discrete Mathematics},
volume=16,
number=2,
pages={36-45},
year = {2021}
}

@inproceedings{BKR,
author={Manuel Bodirsky and Simon Kn\"{a}uer
and Sebastian Rudolph},
title={Datalog-Expressibility for Monadic and Guarded Second-Order Logic},
booktitle = {48th International Colloquium on Automata, Languages, and Programming,
               {ICALP} 2021, July 12-16, 2021, Glasgow, Scotland (Virtual Conference)},
  pages     = {120:1--120:17},
  year      = {2021},
note={Preprint available at https://arxiv.org/abs/2010.05677}, 
doi       = {10.4230/LIPIcs.ICALP.2021.120}
}

@article{marimon2024minimal,
  title={Minimal operations over permutation groups},
  author={Marimon, Paolo and Pinsker, Michael},
  journal={arXiv preprint arXiv:2410.22060},
  year={2024}
}

@misc{bodirsky2025takingmodelcompletecores,
      title={Taking model-complete cores}, 
      author={Manuel Bodirsky and Bertalan Bodor and Paolo Marimon},
      year={2025},
      eprint={2512.21278},
      archivePrefix={arXiv},
      primaryClass={math.LO},
      url={https://arxiv.org/abs/2512.21278}, 
}

@article{palfy1982unary,
  title={Unary polynomials in algebras, {II}},
  author={P{\'a}lfy, P{\'e}ter P{\'a}l and \'{A}gnes Szendrei},
  journal={Contributions to general algebra},
  volume={2},
  pages={273 -- 290},
  year={1982},
  publisher={Verlag H\"{o}lder-Pichler-Tempsk}
}

@article{kearnes2001collapsing,
  title={Collapsing permutation groups},
  author={Kearnes, Keith A and Szendrei, {\'A}gnes},
  journal={Algebra Universalis},
  volume={45},
  number={1},
  pages={35--51},
  year={2001}
}

\end{document}